\begin{document}

\title{Rank Functions on Rooted Tree Quivers}
\author{Ryan Kinser}
\address{Department of Mathematics, University of Michigan, Ann Arbor, Michigan 48109}
\email{kinser@umich.edu}
\thanks{This material was based upon work supported under a National Science Foundation Graduate Research Fellowship, and NSF Grant DMS 0349019}
\subjclass[2000]{16G20; 15A69; 19A22}

\begin{abstract}
The free abelian group $R(Q)$ on the set of indecomposable representations of a quiver $Q$, over a field $K$, has a ring structure where the multiplication is given by the tensor product.  We show that if $Q$ is a rooted tree (an oriented tree with a unique sink), then the ring $R(Q)_{red}$ is a finitely generated $\Z$-module (here $R(Q)_{red}$ is the ring $R(Q)$ modulo the ideal of all nilpotent elements).  We will describe the ring $R(Q)_{red}$ explicitly, by studying functors from the category $\repq$ of representations of $Q$ over $K$ to the category of finite dimensional $K$-vector spaces.
\end{abstract}
\maketitle

\section{Introduction}

A \textbf{quiver} is just a directed graph $Q=(Q\verts, Q\arrows, t, h)$, where $Q\verts$ is a vertex set, $Q\arrows$ is an arrow set, and $t, h$ are functions from $\qarrows$ to $\qverts$ giving the tail and head of an arrow, respectively.  We assume $Q\verts$ and $Q\arrows$ are finite in this paper.
For any quiver $Q$ and field $K$, there is a category $\rep_K (Q)$ of representations of $Q$ over $K$.  An object $V$ of $\rep_K (Q)$ is an assignment of a finite dimensional $K$-vector space $V_x$ to each vertex $x \in Q\verts$, and an assignment of a $K$-linear map $V_a \colon V_{ta} \to V_{ha}$ to each arrow $a \in Q\arrows$.  For any path $p$ in $Q$, we get a $K$-linear map $V_p$ by composition.  Morphisms in $\rep_K (Q)$ are given by linear maps at each vertex which form commutative diagrams over each arrow; see the book of Assem, Simson, and Skowro\'nski \cite{assemetal} for a precise definition of morphisms, and other fundamentals of quiver representations.  We will fix some arbitrary field $K$ throughout the paper and hence omit it from notation when possible.

There is also a natural tensor product of quiver representations, induced by the tensor product in the category of vector spaces (cf. \cite{strassenasymptotic, herschtensorjpaa}).  Concretely, it is the ``pointwise'' tensor product of representations defined by
\[
(V \otimes W)_x := V_x \otimes W_x
\]
for each vertex $x$, and similarly for arrows.  This tensor product gives the category $\repq$ the structure of a \textbf{tensor category} in the sense of \cite{DMtannakian}, and, along with direct sum, endows the set of isomorphism classes in $\repq$ with a semiring structure.  The associated ring $R(Q)$ is the \textbf{representation ring} of $Q$ (cf. \S\ref{repringsect}), which is commutative with identity $\id_Q$, where we define $(\id_Q)_x := K$ and $(\id_Q)_a := id$ for all vertices $x$ and arrows $a$.  

For a quiver $Q$ which is not of Dynkin or extended Dynkin type, the problem of classifying its indecomposable representations is unsolved and very difficult, to say the least.  Such a quiver is said to be of ``wild representation type'' (cf. \cite{drozdtamewild}) and has families of indecomposable representations depending on arbitrarily large numbers of parameters.
This limits the effectiveness of an enumerative approach to studying tensor products of quiver representations (as opposed to, say, tensor products of representations of finite groups or the classical groups).  Alternatively, we seek to describe $R(Q)$ in abstract terms, and translate properties of $R(Q)$ into properties of the tensor product in $\repq$.
For example, the main result of this paper has two equivalent formulations, the first of which (Theorem \ref{mainthm}) can be stated in a simplified form here:

\begin{theoremnonum}
When $Q$ is a rooted tree quiver, $R(Q)_{red}$ is generated as a $\Z$-module by a finite set of explicit representations of $Q$.
(Here $A_{red}$ is the reduction of a ring $A$, that is, $A$ modulo its ideal of nilpotent elements.)
\end{theoremnonum}

This theorem has an equivalent formulation (Theorem \ref{splittingthm}) as a splitting principle for large tensor powers $V^{\otimes n}$ of a fixed representation $V$, which makes no mention of the representation ring.

Our main tools for studying $R(Q)$ are global tensor functors which can be used to construct various homomorphisms from $R(Q)$ to other rings.  We present a theorem summarizing the properties of global tensor functors.  Here $\vscat$ denotes the category of finite dimensional vector spaces over a field $K$.

\begin{theorem}[\cite{kinserrank}]\label{kinserrankthm}
Let $Q$ by any connected quiver.  There is a \textbf{global tensor functor}
\[
\rkf_Q\colon \repq \to \repq
\]
with the following properties:
\begin{enumerate}[(a)]
\item The functor $\rkf_Q$ commutes with direct sum, tensor product, Schur functors (when $\charac K = 0$), and duality; we also have that $\rkf_Q (\id_Q) = \id_Q$.
\item Let $V \in \repq$.  Then for every arrow $a$ of $Q$, the linear map $(\rkf_Q (V))_a$ is an isomorphism. Hence the isomorphism class of the functor 
\[
\rankfunc_Q V:=(\rkf_Q (V))_x \colon \repq \to \vscat
\]
is independent of the vertex $x$;  we call this functor the \textbf{global rank functor} of $Q$.
\item When $Q$ is a tree and $V \in \repq$, the representation $\rkf_Q (V)$ is isomorphic to a direct summand of $V$.  More precisely, for any indecomposable representation $W$ of $Q$,
\[
\rkf_Q (W) =
\begin{cases}
W & \text{if }W \simeq \id_Q \\
0 & \text{if }W \not \simeq \id_Q \\
\end{cases}.
\]
\end{enumerate}
\end{theorem}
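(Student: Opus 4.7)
The plan is to construct $\rkf_Q$ by an iterative stabilization that successively trims away the non-isomorphism content of $V$ at each arrow. Concretely, for each arrow $a$, one can replace $V_a$ by the induced isomorphism between the subquotients $V_{ta}/\ker V_a \to \mathrm{im}\,V_a$, and propagate the resulting restriction to the other vertices by pulling back and pushing forward along paths. Iterating this operation over all arrows monotonically decreases the vertex dimensions, so the process stabilizes in finitely many steps to a canonical subquotient $\rkf_Q(V)$ of $V$ in which every arrow map is an isomorphism; this is exactly (b). The vertex-independence of $\rankfunc_Q V$ is then immediate: connectedness of $Q$ lets us chain the arrow-isomorphisms into an isomorphism between any two $(\rkf_Q(V))_x$.

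Once the construction is in place, most of property (a) is formal. Compatibility with direct sums follows from the vertexwise definition, and $\rkf_Q(\id_Q) = \id_Q$ since $\id_Q$ needs no stabilization. Duality commutes with the construction by exchanging the roles of kernel and cokernel, and Schur functor compatibility follows formally from direct sum and tensor compatibility (in characteristic zero, where Schur functors are built from symmetrizers acting on tensor powers). The genuine content is tensor compatibility $\rkf_Q(V \otimes W) \simeq \rkf_Q(V) \otimes \rkf_Q(W)$: the inclusion $\supseteq$ is immediate, since a tensor product of isomorphisms is an isomorphism, while $\subseteq$ (the stabilization of $V \otimes W$ does not overshoot the tensor of individual stabilizations) is best handled via a universal/maximality characterization --- describing $\rkf_Q(V)$ as the terminal or largest subquotient on which every arrow acts invertibly.

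For property (c) the tree hypothesis is essential. I would first show that when $Q$ is a tree, $\rkf_Q(V)$ is realized as a direct summand of $V$: pick any vertex as a root and induct over the tree, at each successive vertex choosing a complement to the appropriate kernel (or image) subspace compatible with the previously chosen complements along the unique path back to the root; acyclicity guarantees that no consistency obstruction ever arises. Given this, the statement on indecomposables follows from Krull--Schmidt: for indecomposable $W$, the direct summand $\rkf_Q(W)$ is either $0$ or all of $W$, and in the latter case every arrow map of $W$ is an isomorphism. An indecomposable representation of a tree with all arrow maps invertible can be identified, via path-isomorphisms, with a single vector space $U$ placed at every vertex with identity maps; this is $\id_Q \otimes U$, which splits as $\id_Q^{\oplus \dim U}$, so indecomposability forces $\dim U = 1$ and $W \simeq \id_Q$.

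The principal obstacles are tensor compatibility in (a) and the direct-summand property in (c). Tensor compatibility is subtle because the iterative stabilization interacts nontrivially with the tensor product, so one must either track dimensions carefully or extract a clean maximality characterization that transports across tensor products. The direct-summand property leverages acyclicity in an essential way: on a quiver with cycles there exist indecomposables whose arrow maps are all invertible but which are not isomorphic to $\id_Q$ (e.g.\ nontrivial local systems on a cycle quiver), so any proof of (c) must use the tree structure beyond purely formal manipulations --- the inductive ``build a complement along the tree'' argument seems the cleanest route.
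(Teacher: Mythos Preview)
This theorem is not proved in the present paper; it is quoted from \cite{kinserrank}. The only information the paper gives about the proof is a brief description of the construction: $\rkf_Q$ is defined as the image functor of the natural composition $\surjrep_Q \hookrightarrow \mathrm{id}_{\repq} \twoheadrightarrow \injrep_Q$, where $\surjrep_Q(V)$ is the unique maximal epimorphic subrepresentation of $V$ and $\injrep_Q(V)$ is the unique maximal monomorphic quotient. So there is no full proof in the paper to compare against, but your construction is visibly different from the one actually used.

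Your iterative stabilization is not the same as the $\surjrep_Q$/$\injrep_Q$ construction, and as written it is not clearly well-defined. When you ``replace $V_a$ by the induced isomorphism $V_{ta}/\ker V_a \to \mathrm{im}\,V_a$ and propagate the resulting restriction to the other vertices,'' you have simultaneously taken a quotient at $ta$ and a subobject at $ha$; the word ``propagate'' hides real choices about how to adjust the other maps touching those vertices, and it is not clear the process is functorial or independent of the order in which arrows are treated. The $\surjrep_Q$/$\injrep_Q$ approach avoids this by separating the two operations globally: first pass to the largest subrepresentation on which every arrow is surjective, then to the largest quotient on which every arrow is injective, and take the image of the resulting map. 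Each of these has a clean universal property, which is what makes functoriality and tensor compatibility tractable.

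On tensor compatibility you correctly identify the crux but do not prove it. Saying it is ``best handled via a universal/maximality characterization'' is a plan, not an argument; one still has to exhibit the characterization and show it is preserved by $\otimes$. Your sketch of (c) is closer to a proof, and the Krull--Schmidt reduction at the end is correct, but the inductive complement-building along the tree would need to be made precise to be convincing.
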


Combining global rank functors with pullback along maps of directed graphs (cf. \S \ref{rankfunctorsect}), it is possible to construct more non-zero functors from $\repq$ to the category of finite dimensional $K$-vector spaces  which respect direct sum and tensor product.  We call such a functor a \textbf{rank functor} on $Q$.  A rank functor $F$ induces a ring homomorphism $f \colon R(Q) \to \Z$, defined on representations $V$ by $f(V) = \dim_K F(V)$ and extended by linearity to $R(Q)$. These functions are called \textbf{rank functions} of $Q$.

A \textbf{rooted tree quiver} is a directed graph $Q$, whose underlying graph is a tree, and which has a unique sink $\sigma$ called the \textbf{root} of $Q$. We sometimes write $(Q, \sigma)$ if we want to specify the root.  Equivalently, one may give a graph which is a tree and specify a root vertex, with the convention that all edges are oriented towards this root.
Rooted trees have the convenient property that all of their connected subquivers are rooted trees, thus lending themselves to inductive proof methods.
The path algebra of a rooted tree quiver is hereditary and right serial; conversely, the ordinary quiver of any basic, hereditary, right serial $K$-algebra is a rooted tree (cf. \cite[Thm.~2.6]{assemetal}).  It should also be noted that the main results of this paper hold (with minor changes in terminology) for a quiver $Q$ which is a tree with a unique \emph{source}. In this case, $Q^{op}$ (the quiver obtained by switching the heads and tails of all arrows) is a tree with a unique sink, and the standard duality between representations of $Q$ and representations of $Q^{op}$ induces a ring isomorphism $R(Q) \cong R(Q^{op})$.
Global rank functors commute with duality also, so the methods used for the unique sink case can be applied in a straightforward way to treat the unique source case.

The paper is organized as follows.
Section \ref{rankfunctorsect} establishes basic tools for studying representations of a rooted tree quiver $Q$ via quivers over $Q$.  This leads to the construction of various distinct rank functors on $Q$ within a combinatorial framework.
The focus is shifted from rank functors to representations of $Q$ in Section \ref{reducedrepssect}, by constructing a set of ``reduced'' representations that are in some sense dual to the rank functors of the previous section.  Then the combinatorics of these rank functors can be utilized to obtain information about tensor products of reduced representations, and morphisms between them.
In Section \ref{repringsect}, we make use of the properties of reduced representations to study representation rings.  First, we give an algebraic framework for reducing questions about tensor products of quiver representations on any quiver (not just rooted trees) to tensor products of representations with full support.  The technical tool developed is a decomposition of $R(Q)$ into a direct product of rings, with one factor for each connected subquiver of $Q$, such that a representation $V$ has nonzero image in the factor corresponding to $P \subseteq Q$ if and only if the support of $V$ contains $P$.
Then by introducing a property of representations of rooted tree quivers which generalizes the support of a representation, we refine this direct product decomposition of $R(Q)$.

The two theorems mentioned above on the representation rings of rooted tree quivers are stated and proven in Section \ref{mainresultsect}.  These comprise the main results of this paper.
First, we show the equivalence of the two theorems, then prove the result by induction on the complexity of a rooted tree.
There are two cases in the proof of the main theorem: one is essentially combinatorial, using the representation ring form of the result as the induction hypothesis; the other is essentially computational, using the splitting principle form of the result as the induction hypothesis.
In Section \ref{conclusionsect}, we introduce the name \textbf{finite multiplicative type} for a quiver whose reduced representation ring is module finite over $\Z$.  Having given a large class of such quivers (the main result), it is then natural to try to classify all of them.  To this end, we show that the class of quivers of finite multiplicative type is minor closed, and can only include trees; but we also give an example of a tree quiver (of tame representation type even) which is not of finite multiplicative type.  By an application of Kruskal's Tree Theorem, this property can be characterized by a \emph{finite} set of forbidden minors.  

\subsection*{Acknowledgements}
I would like to thank my advisor Harm Derksen for conjecturing the main result, and for many discussions leading to its proof.  I would also like to thank Hugh Thomas for comments and corrections to the first version of the paper, and John Stembridge for helpful discussion on minor closed sets of graphs.  Several other insightful comments and corrections were provided by an anonymous referee.

\section{Rank Functors on Rooted Trees}
\label{rankfunctorsect}
\subsection{Pushforward and pullback of representations}

Maps of directed graphs and covering quivers have been used in works such as \cite{riedtmanntranslationquivers, gabrieluniversalcover, bongartzgabrielcoverings} to study representations of quivers, or more generally, of finite dimensional algebras.
In this paper, we will not be interested in maps that are topological coverings of some base quiver, but rather maps that encode combinatorial data about the base quiver.
By definition, a map of directed graphs $f \colon Q' \to Q$ sends vertices to vertices and arrows to arrows, and satisfies $tf(a) = f(ta)$ and $hf(a) = f(ha)$ 
for each arrow $a \in Q'\arrows$.
A \textbf{quiver over $Q$} is a pair $(Q', f)$ where $Q'$ is a quiver, and $f\colon Q' \to Q$ a map of directed graphs called the structure map of $(Q', f)$.
To simplify the notation, we consider the maps $V_a$ of a representation $V$ to be defined on the total vector space $\bigoplus_{x \in Q\verts} V_x$ by taking $V_a (v) = 0$ for $v \in V_y$, when $y \neq ta$.
The \textbf{pullback} $f^* W \in \rep(Q')$ of a representation $W \in \repq$ along a map of directed graphs $f\colon Q' \to Q$ is given by
\[
(f^*W)_x := W_{f(x)} \qquad x \in Q'\verts \qquad \qquad (f^*W)_a := W_{f(a)} \qquad a \in Q'\arrows,
\]
and the \textbf{pushforward} $f_* V \in \repq$ of a representation $V \in \rep(Q')$ is given by
\[
(f_* V)_x := \bigoplus_{y \in f^{-1}(x)} V_y \qquad x \in Q\verts \qquad \qquad (f_* V)_a := \sum_{b \in f^{-1}(a)} V_b \qquad a \in Q\arrows .
\]
A map of rooted tree quivers $f\colon (Q', \sigma') \to (Q, \sigma)$ is said to be \textbf{root preserving} if $f(\sigma') = \sigma$.

There is a categorical view of the above definitions. Write $\mathscr{Q}$ for the free category on the directed graph $Q$: that is, the objects of $\mathscr{Q}$ are the vertices of $Q$, and the morphisms are paths in $Q$.  A representation of $Q$ is the same thing as a functor
\[
V\colon \mathscr{Q} \to \vscat,
\]
and a map of directed graphs $f \colon Q' \to Q$ induces a functor $F \colon \mathscr{Q}' \to \mathscr{Q}$.  Then pullback is just the composition of functors
\[
f^*V \colon \mathscr{Q}' \xto{F} \mathscr{Q} \xto{V} \vscat,
\]
and now we will show that pushforward is its left adjoint.

\begin{prop}\label{adjointprop}
Let $f\colon (Q', \sigma') \to (Q, \sigma)$ be a root preserving map between rooted tree quivers.
Then the pushforward functor $f_*$ is left adjoint to the pullback functor $f^*$.
\end{prop}
\begin{proof}
For $X, Y \in \repq$, the map of vector spaces
\begin{equation*}
\begin{split}
\bigoplus_{z \in Q\verts}	\Hom_K (X_z, Y_z) 	 &\xto{c^Y_X} \bigoplus_{a \in Q\arrows}  \Hom_K (X_{ta}, Y_{ha})  \\
		 (\phi_z)_{z \in Q\verts} 			 &\mapsto		(Y_a \phi_{ta} - \phi_{ha} X_a)_{a \in Q\arrows}
\end{split}
\end{equation*}
has kernel precisely $\Hom_{Q}(X, Y)$, by the definition of morphisms in $\repq$, and similarly for $Q'$.  We use this to give a natural isomorphism $\Hom_Q (f_* V, W) \cong \Hom_{Q'} (V, f^*W)$.

For $V \in \rep(Q')$ and $W \in \repq$, there is a natural isomorphism of vector spaces
\begin{equation}\label{adjointeqn}
\begin{split}
\bigoplus_{x \in Q\verts} \Hom_K (V_x, (f^* W)_x) &\cong \bigoplus_{y \in Q'\verts} \Hom_K ((f_*V)_y, W_y) \\
(\phi_x)_{x \in Q\verts} &\mapsto \left(\sum_{x \in f^{-1}(y)} \phi_x \right)_{y \in Q'\verts}.
\end{split}
\end{equation}
Then we can compute
\[
c^{f^*W}_{V}(\phi_x) = \sum_{b \in {Q\arrows}} \sum_{c \in f^{-1}(b)} W_b \phi_{tc} - \sum_{b \in {Q\arrows}} \sum_{c \in f^{-1}(b)} \phi_{hc} V_c= \sum_{c \in Q'\arrows} W_{f(c)} \phi_{tc} - \sum_{c \in Q'\arrows} \phi_{hc} V_c
\]
and using the identification (\ref{adjointeqn}) we also compute
\[
c^W_{f_*V}(\phi_x) = \sum_{b \in {Q\arrows}} \sum_{x \in f^{-1}(tb)} W_b \phi_x - \sum_{b \in {Q\arrows}} \sum_{y \in f^{-1}(hb)} \sum_{\substack{c \in f^{-1}(b) \\ hc = y}} \phi_y V_c= \sum_{\substack{(x, b)\in Q'\verts \times Q\arrows \\ x = f^{-1}(tb)}} W_b \phi_x - \sum_{c \in Q'\arrows} \phi_{hc} V_c.
\]

Now for each pair $(x, b) \in Q'\verts \times Q\arrows$ such that $f(x) = tb$, there exists a unique arrow $c \in Q'\arrows$ such that $f(c) = b$ and $tc = x$.  This is because, in a rooted tree quiver, every vertex except the sink has a unique outgoing arrow, and the assumption that $f(\sigma') = \sigma$ guarantees that $x \neq \sigma'$.  This allows us to identify the terms $W_{f(c)} \phi_{tc}$ with the terms $W_b \phi_x$ in the sums above.
Thus, the isomorphism (\ref{adjointeqn}) induces a natural isomorphism $\im c^W_{f_*V} \cong \im c^{f^*W}_{V}$, and so the kernels of $c^{f^*W}_{V}$ and $c^W_{f_*V}$ are naturally isomorphic also.
\end{proof}

We sketch a more elegant proof of the preceding proposition using sheaves, which was provided by an anonymous referee.  A rooted tree quiver $Q$ can be viewed as a poset by declaring $x \geq y$ for any two vertices $x,y$ such that there exists a path from $x$ to $y$.  Then we can define a topology on $Q\verts$ by taking the open sets to be the dual order ideals (also known as upper sets) of $Q$ \cite[p.~100]{stanleyenumcombin}.  The category of sheaves of finite dimensional vector spaces on this topological space is equivalent to the category $\rep (Q^{op})$, where $Q^{op}$ denotes the quiver $Q$ with the orientations of the arrows reversed.  Stalks of sheaf correspond to the vector spaces associated to the vertices in a representation.

A continuous map $f \colon Q' \to Q$ under this topology on rooted tree quivers is just a map of directed graphs, and when $f$ is root preserving we find that pushforward and pullback of sheaves agree with the corresponding notions for quiver representations under the equivalence.  So the adjointness of $f^*$ and $f_*$ for sheaves implies that
$
f^* \colon \rep(Q^{op}) \to \rep(Q'^{op})
$
is left adjoint to $f_* \colon  \rep(Q'^{op}) \to \rep(Q^{op})$, and dualizing we get the adjoint pair $(f_*, f^*)$ of the proposition.


\subsection{Rooted tree quivers}\label{rootedtreesect}
Let $(Q, \sigma)$ be a rooted tree quiver.  Then any $Q' \xto{f} Q$ induces a rank functor on $Q$ by composing pullback along $f$ with the global rank functor of $Q'$. This gives the possibility of constructing infinitely many rank functors on $Q$, \emph{a priori}, but it is possible for distinct quivers over $Q$ to give isomorphic rank functors.  It turns out that when $Q$ is a rooted tree, there is a finite set of  ``reduced'' quivers over $Q$ which give all rank functors on $Q$ that can be obtained in the way just described.  Furthermore, there is a natural partial ordering on this set of rank functors.

The technique of many proofs in this paper is induction on the number of vertices of $Q$, using the observation that every connected subquiver of a rooted tree is again a rooted tree.  A \textbf{subquiver} $P$ of a quiver $Q$ is given by a subset of vertices $P\verts \subseteq Q\verts$ and a subset of arrows $P\arrows \subseteq Q\arrows$ with the same orientation as in $Q$.  We will usually assume that subquivers are connected, so that the global rank functor of a subquiver is defined.
We can build any rooted tree quiver by two fundamental processes, which we call ``extension'' and ``gluing''.  

We say that a rooted tree quiver $(Q, \sigma)$ is obtained from a subquiver $(P, \tau) \subset Q$ by \textbf{extension} if
\[
Q\verts = P\verts \cup \{ \sigma \} \qquad \text{and} \qquad Q\arrows = P\arrows \cup \{ \alpha \}
\]
where $t\alpha = \tau$ and $h\alpha = \sigma$.
\comment{ Diagrammatically, this looks like
\[
Q= \qquad \extensiondiagram
\]}
Note that if such a $P$ exists (in a rooted tree quiver), it is unique.

In any rooted tree quiver which is not an extension of a subquiver, in the above sense, there exists a unique maximal collection of subquivers $\{Q_i \subsetneq Q\}_{i \in I}$ such that $Q$ is obtained by \textbf{gluing} the $Q_i$ at their sinks:  that is, $Q = \bigcup_i Q_i$, and $Q_i \cap Q_j = \sigma$ for $i \neq j$.  In this case, we write
\[
Q = {\coprod_{i \in I}}^\sigma Q_i.
\]
Each of these $Q_i$ is an extension of a unique subquiver $P_i \subsetneq Q_i$.
The notion of $Q$ being glued from any two subquivers $P, S \subset Q$ at their sinks is similarly defined.

Many proofs in this paper will use induction on the number of vertices of $(Q, \sigma)$.  If there is a unique arrow in $Q$ whose head is $\sigma$, then $Q$ is obtained from a quiver with fewer vertices by extension; if there is more than one arrow with head $\sigma$, then $Q$ is obtained by gluing two quivers $P$ and $S$ at $\sigma$, each with fewer vertices than $Q$.  Hence, to recursively define a construction depending on $Q$, or to prove any property of $Q$ by induction, we just need to start with the rooted tree quiver with one vertex, and then show how to proceed for quivers obtained by extension or gluing.
The base case is usually trivial and will be omitted.  In the gluing case, \emph{a priori} a definition or property could depend on the choice of $P$ and $S$ in $Q$.  However, this will not be the case for any properties or definitions of this paper, which can be easily seen in any particular case by working with the unique maximal collection of $Q_i$'s as in the definition of gluing above.  So, for simplicity, in the gluing case we will always use the setup of two subquivers $P$ and $S$ with it implicit that the result does not depend on the choice of $P$ and $S$ used.
In the course of such proofs, the notation above will be assumed to be in place unless explicitly stated otherwise.

Our first task will be to establish a connection between the global rank functor of a rooted tree quiver, and the global rank functors of its subquivers.
The use of two functors $\surjrep_Q$ and $\injrep_Q$ will only be needed for a few technical lemmas, so their properties will not be reviewed in depth; the interested reader may see the paper \cite{kinserrank} for details.
Briefly, for $V \in \repq$, the functor
\[
\surjrep_Q \colon \repq \to \repq
\]
gives the unique maximal epimorphic subrepresentation $\surjrep_Q (V) \subseteq V$, where a representation $W \in \repq$ is said to be \textbf{epimorphic} when $W_a$ is surjective for every arrow $a \in Q\arrows$.  Dually, $\injrep_Q$ gives the unique maximal quotient of $V$ such that the maps at all arrows are injective.  Then $\rkf_Q$ is defined as the image functor of the composition $\surjrep_Q \into id_{\repq} \onto \injrep_Q$.  

\begin{lemma}\label{rootedtreeranklemma}
If $(Q, \sigma)$ is a rooted tree quiver, then $(\injrep_Q V)_\sigma = V_\sigma$, and hence
\[
\rank_Q V = (\surjrep_Q V)_\sigma \subseteq V_\sigma .
\]
\end{lemma}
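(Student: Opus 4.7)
The plan is to show directly that the minimal subrepresentation $K \subseteq V$ such that $V/K$ has all arrow maps injective satisfies $K_\sigma = 0$; this is equivalent to $(\injrep_Q V)_\sigma = V_\sigma$, from which the second statement is a short diagram chase.

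By the description of $\injrep_Q$ recalled just before the lemma, $\injrep_Q V$ is the unique maximal quotient of $V$ on which every arrow acts injectively, so we may write $\injrep_Q V = V/K$ where $K$ is the smallest subrepresentation satisfying $V_a^{-1}(K_{ha}) \subseteq K_{ta}$ for every arrow $a$. Such a minimal $K$ exists because the collection of subrepresentations with this property is nonempty (it contains $V$) and closed under intersection at each vertex.

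The key observation is that $\sigma$ is the unique sink of $Q$, so there is no arrow $a$ with $ta = \sigma$; hence $K_\sigma$ appears in the defining condition only on the right-hand side, through the conditions $V_a^{-1}(K_\sigma) \subseteq K_{ta}$ for arrows $a$ with $ha = \sigma$. Shrinking $K_\sigma$ makes all such conditions easier to satisfy, so minimality forces $K_\sigma = 0$, and we conclude $(\injrep_Q V)_\sigma = V_\sigma / 0 = V_\sigma$. For the second claim, $\rkf_Q V$ is by definition the image of the canonical morphism $\surjrep_Q V \hookrightarrow V \twoheadrightarrow \injrep_Q V$. Evaluated at $\sigma$, this is the inclusion $(\surjrep_Q V)_\sigma \hookrightarrow V_\sigma$ composed with the identity $V_\sigma \to (\injrep_Q V)_\sigma = V_\sigma$, whose image is exactly $(\surjrep_Q V)_\sigma$. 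Recalling $\rank_Q V := (\rkf_Q V)_\sigma$ yields the stated equality and inclusion.

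The main obstacle I expect is justifying existence and minimality of $K$ cleanly; one must check that intersections of subrepresentations satisfying the condition still satisfy it. An alternative that sidesteps this is to construct $K$ explicitly by working backward from $\sigma$: set $K_\sigma := 0$, and for each non-root vertex $x$ let $a_x$ be its unique outgoing arrow (unique because $Q$ is a rooted tree), then put $K_x := V_{a_x}^{-1}(K_{h a_x})$. This is well-defined by induction on the distance from $x$ to $\sigma$, manifestly gives a subrepresentation with the required property, and is minimal by comparing termwise with any other candidate along the same induction. Either route makes the conclusion $K_\sigma = 0$ immediate.
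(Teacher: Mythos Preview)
Your alternative, explicit construction is correct and is essentially the paper's proof: the paper defines the quotient directly by $W_x = V_x/\ker V_{x\to\sigma}$, and your recursion $K_\sigma=0$, $K_x=V_{a_x}^{-1}(K_{h a_x})$ unwinds to exactly $K_x=\ker V_{x\to\sigma}$, so the two arguments coincide.

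One small gap in your first (abstract minimality) argument: when you say ``shrinking $K_\sigma$ makes all such conditions easier to satisfy,'' you are only tracking the injectivity condition $V_a^{-1}(K_\sigma)\subseteq K_{ta}$ and forgetting that $K$ must remain a subrepresentation, i.e.\ $V_a(K_{ta})\subseteq K_\sigma$ for arrows $a$ into $\sigma$; shrinking $K_\sigma$ alone can break that. The clean fix is precisely your explicit $K$ (or the paper's $W$), which exhibits a candidate with $K_\sigma=0$ and hence forces the minimal one to have $K_\sigma=0$ as well.
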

\begin{proof}
For $x \in Q\verts$, denote by $V_{x \to \sigma}$ the linear map $V_p$ for $p$ the unique path from $x$ to $\sigma$.  Then
\[
W_x := \frac{V_x}{\ker V_{x \to \sigma}}
\]
defines a monomorphic quotient representation of $V$.  But any intermediate quotient $V \onto W' \onto W$ is not monomorphic, because by definition of $W$ there must be some path $p$ such that $W'_p$ has non-trivial kernel.  Hence $\injrep_Q V = W$, and since $V_{\sigma \to \sigma}$ is the identity map, we get the first statement.  Then the second statement follows from the definition of $\rank_Q V$.
\end{proof}

If $P \subset Q$ is a subquiver, and $V \in \repq$, denote by $V|_P$ the restriction of $V$ to $P$.  The previous lemma can be used to inductively construct global rank functors.

\begin{lemma}\label{glueextranklemma}
If $(Q, \sigma)$ is obtained by extension from $(P, \tau)$ via an arrow $\alpha$, then the global rank functor of $Q$ can be calculated as
\[
\rank_Q V = V_\alpha (\rank_P (V|_P))
\]
where we consider $\rank_P (V|_P) \subseteq V_\tau$.
If $Q$ is obtained by gluing $P$ and $S$, then we have
\[
\rank_Q V = \rank_P ( V|_P) \cap \rank_S (V|_S)
\]
where the intersection is taken in $V_\sigma = (V|_P)_\sigma = (V|_S)_\sigma$.
\end{lemma}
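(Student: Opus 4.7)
The plan is to reduce both formulas to an analysis of $\surjrep_Q V$ via Lemma \ref{rootedtreeranklemma}, which identifies $\rank_Q V$ with $(\surjrep_Q V)_\sigma$. In each case one inclusion will be immediate from the maximality property of $\surjrep_Q$, while the other requires constructing a suitable epimorphic subrepresentation of $V$.

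For the extension case, the restriction $(\surjrep_Q V)|_P$ drops only the arrow $\alpha$ and is thus an epimorphic subrepresentation of $V|_P$, giving $(\surjrep_Q V)_\tau \subseteq \rank_P(V|_P)$. Conversely, $\surjrep_P(V|_P)$ extends to a subrepresentation of $V$ on $Q$ by taking its $\sigma$-component to be $V_\alpha(\rank_P(V|_P))$; this extension is still epimorphic on $Q$, so it lies inside $\surjrep_Q V$, giving the reverse inclusion. Pushing the resulting equality $(\surjrep_Q V)_\tau = \rank_P(V|_P)$ through $V_\alpha$, and using that $\surjrep_Q V$ is surjective at $\alpha$, yields $\rank_Q V = V_\alpha(\rank_P(V|_P))$. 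In the gluing case, the analogous inclusion $\rank_Q V \subseteq \rank_P(V|_P) \cap \rank_S(V|_S)$ is obtained by restricting $\surjrep_Q V$ to each of $P$ and $S$ separately.

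The reverse inclusion in the gluing case is the main obstacle. I would handle it through an auxiliary claim: for any rooted tree $(T,\tau)$, any $V \in \rep(T)$, and any subspace $A \subseteq \rank_T V$, there exists an epimorphic subrepresentation $U \subseteq V$ with $U_\tau = A$. Granted this claim for the smaller trees $P$ and $S$, set $A := \rank_P(V|_P) \cap \rank_S(V|_S)$, realize $A$ as the $\sigma$-component of epimorphic subrepresentations on each of $P$ and $S$, then glue them along their common $\sigma$-component to obtain an epimorphic subrepresentation of $V$ with $\sigma$-component exactly $A$. This forces $A \subseteq (\surjrep_Q V)_\sigma = \rank_Q V$.

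The auxiliary claim I would prove by induction on the number of vertices of $T$, using the formulas of this very lemma on proper subquivers. In an extension step along $\beta$, set $B := V_\beta^{-1}(A) \cap \rank_{T'}(V|_{T'})$ so that $V_\beta(B) = A$; the inductive claim on $T'$ realizes $B$ as the sink-component of an epimorphic subrepresentation, to which $A$ is attached at the new sink. In a gluing step, $A$ lies inside the rank of each piece by the gluing formula, so the inductive claim applied on each side followed by re-gluing suffices. The delicate point is the apparent circularity between the claim and the lemma; this is resolved by running a single joint induction on the number of vertices of $Q$, arranging the order so that the lemma for $Q$ is established first (using the claim only on strictly smaller quivers), after which the claim for $Q$ can legitimately invoke the lemma for $Q$.
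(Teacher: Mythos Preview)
Your argument is correct, and the extension case is essentially identical to the paper's. The difference lies in the gluing case, specifically in how you obtain the inclusion $\rank_P(V|_P)\cap\rank_S(V|_S)\subseteq\rank_Q V$.

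You prove an auxiliary claim by a joint induction with the lemma: every subspace $A\subseteq\rank_T V$ is the sink-component of some epimorphic subrepresentation of $V$. This is a perfectly valid approach, and the joint induction you describe is correctly organized. However, the paper bypasses this entirely with a one-line explicit construction: for $Z:=\rank_P(V|_P)\cap\rank_S(V|_S)$, set
\[
M_x:=(V_{x\to\sigma})^{-1}(Z)\cap(\surjrep_P(V|_P))_x\quad\text{for }x\in P\verts,
\]
and similarly over $S$. One checks directly that $M$ is an epimorphic subrepresentation with $M_\sigma=Z$, using only that $\surjrep_P(V|_P)$ is already epimorphic. In fact this same formula, with $P$ replaced by $T$ and $Z$ by $A$, proves your auxiliary claim outright for any rooted tree, with no induction needed. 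So while your route works, it introduces an inductive mechanism that the paper's direct construction renders unnecessary; the payoff of your approach is a reusable statement (the auxiliary claim), but the paper achieves the same effect more cheaply.
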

\begin{proof}
\extcase Define an epimorphic subrepresentation $E \subseteq V$ by
\[
E_x =
\begin{cases}
\surjrep_P ( V|_P)_x & x \in P\verts \\
V_\alpha (E_\tau) & x = \sigma \\
\end{cases}.
\]
If $M \subseteq V$ is any epimorphic subrepresentation, then we have an inclusion $M|_P \subseteq \surjrep_P ( V|_P) = E|_P$ by the universal property of $\surjrep_P$, and the epimorphic property gives
\[
M_\sigma \subseteq V_\alpha (M_\tau) \subseteq V_\alpha (E_\tau ) = E_\sigma .
\]
Hence any epimorphic subrepresentation $M \subseteq V$ is contained in $E$, so we have $E = \surjrep_Q V$ by the universal property of $\surjrep_Q$.  Using Lemma \ref{rootedtreeranklemma}, we get
\[
\rank_Q V = (\surjrep_Q V)_\sigma = E_\sigma = V_\alpha (E_\tau) = V_\alpha ( \surjrep_P ( V|_P)_\tau ) = V_\alpha (\rank_P ( V|_P)) .
\]

\gluecase We want to show that $\rank_Q V= Z$, where
\[
Z := \rank_P ( V|_P) \cap \rank_S (V|_S).
\]
By Lemma \ref{rootedtreeranklemma}, we have $\rank_Q V = (\surjrep_Q V)_\sigma$ and $Z =  \surjrep_P (V|_P)_\sigma \cap \surjrep_S (V|_S)_\sigma $.

$\supseteq:$ First we construct an epimorphic subrepresentation $M \subseteq V$ such that $M_\sigma = Z$.  Retaining the notation $V_{x \to \sigma}$ from the previous lemma, define $M$ by 
\[
M_x :=
\begin{cases}
(V_{x \to \sigma})^{-1}(Z) \cap \surjrep_P (V|_P)_x		& x \in P\verts \\
(V_{x \to \sigma})^{-1}(Z) \cap \surjrep_S (V|_S)_x		& x \in S\verts
\end{cases}.
\]
It is straightforward to check that $M$ is an epimorphic subrepresentation of $V$, so $M$ is contained in $\surjrep_Q V$.  Hence $Z = M_\sigma \subseteq (\surjrep_Q V)_\sigma = \rank_Q V$.

$\subseteq:$  The universal properties of $\surjrep_P$ and $\surjrep_S$ give
\[
(\surjrep_Q V)|_P \subseteq \surjrep_P (V|_P) \qquad \text{and} \qquad (\surjrep_Q V)|_S \subseteq \surjrep_S (V|_S)
\]
so again using Lemma \ref{rootedtreeranklemma} we have
\[
\rank_Q V = (\surjrep_Q V)_\sigma \subseteq Z . \qedhere
\]
\end{proof}

Our goal is to construct as many distinct rank functors on $Q$ as possible.
We motivate the general procedure with two examples.

\begin{example}\label{exampleone}
The three subspace quiver $Q$ can be obtained from three $A_2$ quivers 
$$Q_1=
\vcenter{\xymatrix@=12pt{
		& \sigma	\\
1 \ar[ur]	& 	}}
\qquad
Q_2=
\vcenter{\xymatrix@=12pt{
 \sigma	\\
2 \ar[u]	}}
\qquad
Q_3 = \vcenter{\xymatrix@=12pt{
 \sigma	&	\\
		& 3 \ar[ul]	}}
$$
by gluing them at their sinks.
\[
Q= \threesubspace{\sigma}{1}{2}{3}
\]
On each $Q_i$, the global rank functor is just given on a representation $V = V_i \xto{A_i} V_\sigma$ by
\[
\rank_{Q_i}(V) =\im A_i \subseteq V_\sigma ,
\]
and $F_i (V) = V_\sigma$ is another rank functor on $Q_i$.

To any subset $J \subseteq \{1, 2, 3 \}$ and $V \in \repq$, we can use the rank functors on the subquivers $Q_i$ to get a vector space
\[
\rank_J 
(V) = \bigcap_{j \in J} \rank_{Q_j} (V|_{Q_j}) \subseteq V_\sigma
\]
and in fact this defines a rank functor on $Q$.  This gives an ordering reversing correspondence between the lattice of subsets of $B_3 = \{1, 2, 3 \}$, and a set of rank functors on $Q$, with the latter ordered by inclusion of functors.  But not only are these rank functors built from rank functors on smaller quivers, the partial order on them also comes from these smaller quivers, in the following sense.
Let $\bold{2} = \{\hat{0}, \hat{1}\}$ be ordered by $\hat{0} < \hat{1}$.  
If, for each $i$, we associate $\hat{0}$ with $F_i$ and $\hat{1}$ with $\rank_{Q_i}$, then the isomorphism of posets $B_3 \simeq \bold{2} \times \bold{2} \times \bold{2}$ induces the ordering on rank functors of $Q$ by associating a product of elements on the right hand side with intersection of the associated functors inside $V_\sigma$. The idea is illustrated by the following diagram.
\begin{figure}[h]
\[
\vcenter{\xymatrix@C=1ex{
			& V_\sigma \\
{\im} A_1 \ar@{^{`}->}[ur]		& {\im} A_2	\ar@{^{`}->}[u] & {\im} A_3 \ar@{_{`}->}[ul]	\\
{\im} A_1 \cap {\im} A_2 \ar@{^{`}->}[u] \ar@{^{`}->}[ur] & {\im} A_1 \cap {\im} A_3 \ar@{_{`}->}[ul] \ar@{^{`}->}[ur] & {\im} A_2 \cap {\im} A_3 \ar@{_{`}->}[ul] \ar@{^{`}->}[u]\\
			& {\im} A_1 \cap {\im} A_2 \cap {\im} A_3 \ar@{_{`}->}[ul] \ar@{^{`}->}[u] \ar@{^{`}->}[ur]}}
\longleftrightarrow
\vcenter{\xymatrix{
V_\sigma \\
{\im} A_1 \ar@{^{`}->}[u]}}
\times
\vcenter{\xymatrix{
V_\sigma \\
{\im} A_2 \ar@{^{`}->}[u]}}
\times
\vcenter{\xymatrix{
V_\sigma \\
{\im} A_3 \ar@{^{`}->}[u]}}
\]
\caption{}
\end{figure}
\end{example}

\begin{example}\label{exampletwo}
Now write the three subspace quiver as $P$ below, and let $Q$ be the extension of $P$ from its sink.
\[
P = \threesubspace{\tau}{1}{2}{3} \qquad Q = \extDfourasleekmaps{1}{2}{3}{\tau}{\sigma}{}{}{}{\alpha}
\]
Given any rank functor $\rank_J$ on $P$ from the previous example, the image functor
\[
V_\alpha (\rank_J(V |_P)) \subseteq V_\sigma
\]
is a rank functor on $Q$.  Now we make the simple observation that for any linear map between vector spaces $A: U \to W$, and two subspace $X, Y \subset U$, the containment $A(X) \cap A(Y) \supseteq A(X \cap Y)$ is not necessarily an equality.  Thus, any \emph{collection} of subsets $\{J_i \} \subseteq B_3$ induces a rank functor on $Q$ given by
\[
V \mapsto \bigcap_i V_\alpha (\rank_{J_i} (V |_P)) \subseteq V_\sigma
\]
which is not in general the image of any one rank functor on $P$.  But because there are inclusions among these rank functors, some collections will be redundant:  for example, if any $J_i = \{1, 2, 3 \}$ then the intersection simplifies to $V_\alpha (\rank_{\{1, 2, 3\}} V)$, since this space is contained in $V_\alpha (\rank_{J'} V)$ for any $J' \subseteq \{1, 2, 3 \}$.  To avoid redundancy, we must consider collections of \emph{incomparable} elements of $B_3$.
\end{example}

These two examples capture the essence of the combinatorics we will use to index a nice set of rank functors on a given rooted tree quiver.

\subsection{A combinatorial construction}\label{combinsect}
Before undertaking an analysis of the rank functors on $Q$, we introduce an auxiliary combinatorial framework which will organize the connection between quivers over $Q$, rank functors on $Q$, and the representation ring of $Q$.

First, we recall some definitions which can be found in Stanley's book \cite{stanleyenumcombin}.  An \textbf{(order) ideal} in a poset $A$ is a subset $I \subseteq A$ such that $y \leq x$ and $x \in I$ implies that $y \in I$ also.  In particular, both $\emptyset$ and $A$ are ideals of $A$.
For any subset $\{x_1, \dotsc , x_n \} \subseteq A$, we denote by $\gen{x_1, \dotsc , x_n}$ the smallest ideal of $A$ containing all of the $x_i$.  The set of all order ideals in $A$ is denoted by $J(A)$, and is partially ordered by inclusion.  It is even a distributive lattice, with join operator $\vee$ corresponding to union of ideals and meet operator $\wedge$ corresponding to intersection of ideals.  A map of posets $f\colon A \to B$ induces a map $f \colon J(A) \to J(B)$ which sends an ideal $I \subseteq A$ to the ideal generated by the image $f(I) \subseteq B$.  The \textbf{product} of two posets $A$ and $B$ is their usual product $A \times B$ as sets, ordered by $(x, y) \leq (z, w)$ if and only if both $x \leq z$ and $y \leq w$.  If both $A$ and $B$ are distributive lattices, then so is $A \times B$ with the meet and join operations carried out in each coordinate.
We will always consider $A$ to be a sublattice of $A \times B$ via the inclusion
\begin{equation}\label{latticeproducteqn}
A \simeq \setst{(a, \hat{0})}{a \in A} \subseteq A \times B , 
\end{equation}
and similarly for $B$.  We denote the minimal and maximal elements of a finite lattice $L$ by $\hat{0}$ and $\hat{1}$, respectively, using subscripts to clarify the role of $L$ if necessary.

A set of pairwise incomparable elements $\{x_1, \dotsc, x_k \}$ in a poset $A$ is called an \textbf{antichain} in $A$.  
The map 
\[
\max \colon J(A) \xto{\sim} \{ \text{antichains in } A \}
\]
sending an ideal of $A$ to its set of maximal elements is a bijection between the set of ideals of $A$ and the set of antichains in $A$.  The inverse associates to an antichain $C$ the order ideal generated by $C$.  

Now we proceed to define, for each vertex $x \in Q\verts$, a finite, distributive lattice $\latt{Q}{x}$.  If $Q$ has a single vertex $\sigma$, then $\latt{Q}{\sigma}$ is just the lattice with one element.  For $Q$ with more than one vertex, we define $\latt{Q}{x}$ recursively.  
For any vertex $x \in Q\verts$, there is a unique maximal connected subquiver $Q\tox$ for which $x$ is the sink.  Its vertices are
\[
\left(Q\tox \right)\verts := \setst{y \in Q\verts}{\text{there exists a path from $y$ to $x$}}.
\]
If $x \neq \sigma$, then we can assume that $\latt{Q\tox}{x}$ is already defined, and we take
\[
\latt{Q}{x} := \latt{Q\tox}{x}.
\]
If $x = \sigma$, then removing the vertex $\sigma$ and all arrows attached to $\sigma$ leaves a disjoint union of rooted tree quivers $(Q_i, \sigma_i)$.  We inductively define
\[
\latt{Q}{\sigma} := \prod_i J \left( \latt{Q_i}{\sigma_i} \right) .
\]
In particular, when $(Q, \sigma)$ is an extension of $(P, \tau)$, we have
\[
\latt{Q}{\sigma} = J(\latt{P}{\tau}),
\]
and if $Q$ is obtained by gluing subquivers $P$ and $S$, then we have 
\[
\latt{Q}{\sigma} = \latt{P}{\sigma} \times \latt{S}{\sigma}.
\]
We define the set $L_Q$ as the disjoint union
\[
L_Q := \coprod_{x \in Q\verts} \latt{Q}{x} .
\]

\subsection{Reduced quivers over $Q$}\label{reducedquiversect}
Now for each $M \in \latt{Q}{x}$, we will construct a rooted tree quiver $(Q_M, \sigma_M)$ and a map of directed graphs
\[
c_M\colon Q_M \to Q
\]
such that $c_M(\sigma_M) = x$ (and hence $c_M^{-1}(x) = \{ \sigma_M \}$ since $c_M$ preserves heads and tails of arrows).
When $Q$ has one vertex $\sigma$, the lattice $\latt{Q}{\sigma}$ has one element $\hat{1}$ for which we define $Q_{\hat{1}} = Q$ and
\[
c_{\hat{1}} \colon Q \xto{id} Q.
\]
If $Q$ has more than one vertex, we make the definition recursively.  For $M \in \latt{Q}{x}$ such that $x \neq \sigma$, we defined $\latt{Q}{x} = \latt{Q\tox}{x}$, so we can assume that we already have $(Q_M, c_M)$, a quiver over $Q\tox$, which we regard as a quiver over $Q$ via the inclusion
\[
c_M \colon Q_M \to Q\tox \subset Q .
\]
We use the same notation whether considering $Q_M$ as a quiver over $Q$ or $Q\tox$, with the context making the target clear.  For $M \in \latt{Q}{\sigma}$, there are two cases.  As always, we retain the notation from $\S\ref{rootedtreesect}$.

\extcase Let $M \in \latt{Q}{\sigma} = J(\latt{P}{\tau})$ be an order ideal.  If $M = \emptyset$ is the empty ideal, then $M = \hat{0}$ in the lattice $\latt{Q}{\sigma}$, and we take $Q_M$ to have one vertex $\sigma_M$. Define $c_M \colon Q_M \to Q$ to map this vertex to $\sigma$.
Now if $M \neq \emptyset$, let $\max(M) = \{S_1, \cdots S_m\} \subset \latt{P}{\tau}$.  First suppose $m =1$, and set $S:=S_1$.  We define $(Q_{\gen{S}}, \sigma_{\gen{S}})$ as the one point extension of $(P_{S}, \tau_S)$, which we can assume is already defined.  There is a unique map $c_{\gen{S}} \colon Q_{\gen{S}} \to Q$ which extends $c_S \colon P_S \to P$, necessarily sending $\sigma_{\gen{S}}$ to $\sigma$.

For the general case $m \geq 1$, we have already rooted tree quivers $(P_{S_i}, \tau_i)$ and structure morphisms
\[
c_{S_i} \colon P_{S_i} \to P
\]
for each $i$.  Each of these gives 
\[
c_{\gen{S_i}} \colon Q_{\gen{S_i}} \to Q
\]
from the case $m=1$.  We form $Q_M$ by gluing the collection $\{ Q_{\gen{S_i}} \}$ at their sinks $\sigma_{\gen{S_i}}$.  This induces a unique map
\[
c_M \colon Q_M \to Q
\]
which restricts to $c_{\gen{S_i}}$ on each $Q_{\gen{S_i}} \subset Q_M$.

\gluecase
Let $M \in \latt{Q}{\sigma}$.  In the gluing case, by definition we have $\latt{Q}{\sigma} = \latt{P}{\sigma} \times \latt{S}{\sigma}$, so we can write $M =(X, Y)$ for some $X \in \latt{P}{\sigma}$ and $Y \in \latt{S}{\sigma}$.  To define $(Q_M, c_M)$ recursively, we can assume that we have
\[
c_X \colon P_X \to P \qquad \text{and} \qquad c_Y \colon S_Y \to S
\]
defined already.  Then $Q_M$ is given by gluing $P_X$ and $S_Y$ at their sinks, and $c_M \colon Q_M \to Q$ is the unique map which restricts to $c_X$ and $c_Y$ on $P_X$ and $S_Y$, respectively.

\begin{definition}
The quivers $Q_M$ and structure maps $c_M \colon Q_M \to Q$ constructed above for $M \in L_Q$ are the \textbf{reduced} quivers over $Q$.
\end{definition}

We will usually just refer to $Q_M$ alone as a quiver over $Q$, with the structure map $c_M$ being understood.  Also, when $M \in \latt{Q}{\sigma}$, we will sometimes employ a slight abuse of notation by denoting the sink of $Q_M$ also as $\sigma$.  This is unlikely to result in any confusion in the representation theory,  since by definition both $(c_M^* V)_{\sigma_M} = V_\sigma$ and $(c_{M *}W)_\sigma = W_{\sigma_M}$ for any $V \in \repq$ and $W \in \rep(Q_M)$.

Let $\Lambda \xto{\lambda} Q$ and $\Gamma \xto{\gamma} Q$ be any rooted tree quivers over $Q$.  A map of direct graphs $f \colon \Lambda \to \Gamma$ is a \textbf{morphism of quivers over $Q$} if it commutes with the structure maps, that is, if
\[
\xymatrix@C=2ex{
\Lambda \ar[dr]_{\lambda} \ar[rr]^{f} & \ar@{}[d]|{\circlearrowleft} &\Gamma \ar[dl]^{\gamma}\\
& Q }
\]
is a commutative diagram of maps of directed graphs.  We write $f \in \Hom_{\overq} (\Lambda, \Gamma)$.  The following lemma motivates our interest in morphisms between quivers over $Q$ by relating them to rank functors.

\begin{lemma}\label{rankinclusionlemma}
Let $\Lambda \xto{\lambda} Q$ and $\Gamma \xto{\gamma} Q$ be rooted tree quivers over $(Q, \sigma)$, and $f \in \Hom_{\overq} (\Lambda, \Gamma)$.  Assume that $f, \lambda, \gamma$ are all root preserving, and call all of the root vertices of the quivers here $\sigma$ for simplicity.  Then $f$ induces an injective natural transformation $f^* \colon \rank_\Gamma \of \gamma^* \into \rank_\Lambda \of \lambda^*$
such that for any $V \in \repq$ the diagram
\[
\xymatrix{
& (\gamma^*V)_{\sigma} = (\lambda^* V)_{\sigma} = V_\sigma	\\
\rank_{\Gamma} ( \gamma^* V) \ar@{^{`}->}[r]^{f^*} \ar@{^{`}->}[ur]	& \rank_{\Lambda} ( \lambda^* V) \ar@{^{`}->}[u] }
\]
commutes.
\end{lemma}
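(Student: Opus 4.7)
The plan is to reduce the statement to a simple observation about epimorphic subrepresentations, using Lemma \ref{rootedtreeranklemma} to identify $\rank_Q(-)$ with the root component of $\surjrep_Q(-)$.

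Since $f$ is a morphism over $Q$ we have $\lambda = \gamma \circ f$, so pullback of representations along $f$ satisfies $\lambda^* V = f^*(\gamma^* V)$. Setting $W := \gamma^* V \in \rep(\Gamma)$, the task reduces to showing that $\rank_\Gamma W \subseteq \rank_\Lambda (f^* W)$ as subspaces of $W_{\sigma_\Gamma} = (f^* W)_{\sigma_\Lambda} = V_\sigma$, with the inclusion into $V_\sigma$ being the obvious one on both sides.

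The key observation, which is the whole content of the proof, is that pullback preserves epimorphic subrepresentations: if $E \subseteq W$ is an epimorphic subrepresentation, then $f^* E \subseteq f^* W$ is epimorphic, because $(f^* E)_a = E_{f(a)}$ is surjective for every arrow $a$ of $\Lambda$ by hypothesis on $E$. Applying this to $E = \surjrep_\Gamma W$ and invoking the universal property of $\surjrep_\Lambda$ yields
\[
f^*(\surjrep_\Gamma W) \subseteq \surjrep_\Lambda (f^* W) \subseteq f^* W.
\]
Evaluating at $\sigma_\Lambda$ and using root-preservation $f(\sigma_\Lambda) = \sigma_\Gamma$, the left side becomes $(\surjrep_\Gamma W)_{\sigma_\Gamma} = \rank_\Gamma(\gamma^* V)$ by Lemma \ref{rootedtreeranklemma}, while the right side equals $\rank_\Lambda(\lambda^* V)$.

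This inclusion inside $V_\sigma$ provides the natural transformation $f^*$ asserted in the lemma. Commutativity of the displayed triangle is then immediate since both maps into $V_\sigma$ are the underlying inclusions of subspaces. Injectivity is automatic, because the composite $\rank_\Gamma(\gamma^* V) \hookrightarrow \rank_\Lambda(\lambda^* V) \hookrightarrow V_\sigma$ is a monomorphism. Naturality in $V$ is inherited from functoriality of $\gamma^*$, $\lambda^*$, $\surjrep$, and evaluation at $\sigma$. No real obstacle is anticipated; the only nontrivial step is the preservation of epimorphicity under pullback, which falls straight out of the definitions.
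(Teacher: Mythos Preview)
Your argument is correct and takes a genuinely different route from the paper. The paper's proof invokes the decomposition theorem (Theorem~\ref{kinserrankthm}(c)): it writes $\gamma^* V \simeq U \oplus W$ with $U \simeq \id_\Gamma^{\oplus d}$ and $U_\sigma = \rank_\Gamma(\gamma^* V)$, pulls back along $f$ to get $\lambda^* V \simeq f^* U \oplus f^* W$, and then uses additivity of $\rank_\Lambda$ together with $f^* \id_\Gamma \simeq \id_\Lambda$ to conclude $\rank_\Lambda(f^* U) = (f^* U)_\sigma = \rank_\Gamma(\gamma^* V) \subseteq \rank_\Lambda(\lambda^* V)$.

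You instead work directly with $\surjrep$ via Lemma~\ref{rootedtreeranklemma}, reducing everything to the one-line observation that $f^*$ preserves epimorphic subrepresentations, whence $f^*(\surjrep_\Gamma W) \subseteq \surjrep_\Lambda(f^* W)$ by maximality. This is more elementary: it avoids the structural decomposition and uses only the universal property of $\surjrep$. The paper's approach, on the other hand, foregrounds the direct-summand picture that recurs throughout the later sections, so it fits the surrounding narrative better even if it is heavier machinery for this particular lemma.
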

\begin{proof}
By Theorem \ref{kinserrankthm}, there is a decomposition $\gamma^* V \simeq U \oplus W$ for some subrepresentations $U, W \subseteq \gamma^* V$ such that $U_\sigma = \rank_\Gamma (\gamma^* V)$ and  $U \simeq \id_\Gamma^{\oplus d}$.
This gives a decomposition
\[
\lambda^*V \simeq f^* \gamma^* V \simeq f^*U \oplus f^*W
\] 
which, by additivity of rank functors, gives $\rank_\Lambda (\lambda^*V) = \rank_\Lambda (f^* U) \oplus \rank_\Lambda (f^* W)$.  Since $f^* U \simeq \id_\Lambda^{\oplus d}$, this gives the inclusion
\[
\rank_\Lambda (f^* U) = (f^*U)_\sigma = f^* (U_\sigma) = f^* (\rank_\Gamma (\gamma^* V)) \subseteq \rank_\Lambda (\lambda^*V) . \qedhere
\]
\end{proof}

For elements $M, N$ of a poset $A$, we write $M \prec N$ when $N$ \textbf{covers} $M$ in $A$, which is by definition when $M < N$ and there does not exist any $Z \in A$ with $M < Z < N$.

\begin{prop}\label{covermapprop}
There is an injective map of sets from $L_{Q}$ to the set of quivers over $Q$.  Furthermore, if we fix a vertex $x \in Q\verts$, the following statements hold for any $M, N \in \latt{Q}{x}$.
\begin{enumerate}[(a)]
\item Reduced quivers over $Q$ admit no nontrivial endomorphisms, i.e. $\Hom_{\overq} (Q_M, Q_M) = \{ id \}$.
\item For each cover relation $M \prec N$, there exists a unique morphism $\Hom_{\overq} (Q_M, Q_N)=\{ \rho_{M,N} \}$.
\item If $M \nleq N$, then $\Hom_{\overq} (Q_M, Q_N) = \emptyset$.
\end{enumerate}
In particular, we have that
\[
\Hom_{\overq} (Q_M, Q_N) \neq \emptyset \iff M \leq N .
\]
\end{prop}
\begin{proof}
The map is given by sending $M \in L_{Q}$ to $Q_M$, which can inductively be seen to be injective by gluing and extension.  The three statements about morphisms will be verified by induction on the number of vertices of $Q$.
When $x \neq \sigma$, by definition $Q_M$ and $Q_N$ are also reduced quivers over $Q\tox \subsetneq Q$, so the proposition holds by induction.  So assume $x = \sigma$.

\extcase Let $\max(M) = \{S_1, \dotsc, S_m \}$.  Then $f \in \Hom_{\overq} (Q_M, Q_M)$ restricts to a morphism
\[
\overline{f} \in \Hom_{\downarrow P} \Bigl( \coprod_{i=1}^m P_{S_i}, \coprod_{i=1}^m P_{S_i}\Bigr) = \prod_{i=1}^m \Hom_{\downarrow P} \Bigl( P_{S_i},  \coprod_{i=1}^m P_{S_i}\Bigr)
\]
over $P$.  Since the $S_i$ are pairwise incomparable in $\latt{P}{\tau}$, the induction hypothesis implies that 
\[
\Hom_{\downarrow P}(P_{S_i}, P_{S_j}) =
\begin{cases}
id & i=j \\
\emptyset & i \neq j \\
\end{cases}.
\]
Hence $\overline{f}$ is the identity on $\coprod_{i=1}^m P_{S_i}$, and $f = id_{Q_M}$ is the only way this can extend as a morphism of quivers over $Q$.

For b), suppose that $M \prec N$ in $\latt{Q}{\sigma} = J(\latt{P}{\tau})$, and let $f \in \Hom_{\overq} (Q_M, Q_N)$.  Then there exists $T \in \latt{P}{\tau}$ such that $N = M \cup \{ T \}$ as ideals in $\latt{P}{\tau}$.
First consider the case that $M$ is a principal ideal, say $M = \gen{S}$ for some $S \in \latt{P}{\tau}$, so that $Q_M$ is the one point extension of $P_S$ from its sink.  Since $N$ is an ideal, it must be that either $T \succ S$ or that $T$ and $S$ are incomparable.  If $T \succ S$, then by the induction hypothesis we have a unique morphism $P_S \xto{\rho_{S, T}} P_T$ of quivers over $P$, which must be the restriction of $f$ over $P$.  Since $N = M \cup \{ T \} = \gen{T}$, by construction $Q_N$ is extended from $P_T$, so $f$ is uniquely determined as being the extension of $\rho_{S, T}$ to a morphism $Q_M \to Q_N$ over $Q$.
On the other hand, if $T$ and $S$ are incomparable, then $\max(N) = \{S, T\}$ and so $Q_N$ is glued from $Q_{\gen{S}}$ and $Q_{\gen{T}}$.  Now by considering the restriction of $f$ over $P$ again, the induction hypotheses implies that $f$ must be the inclusion $Q_M = Q_{\gen{S}} \subseteq Q_N$.

Now if $M$ is not principal, say $\max(M) = \{S_1, \dotsc, S_m \}$, then by definition $Q_M$ is obtained by gluing the $Q_{\gen{S_i}}$ at their sinks, so to give a map from $Q_M$ it is enough to define it on each $Q_{\gen{S_i}}$ for $1 \leq i \leq m$.  But for each $i$, again we have either $S_i \prec T$ or $S_i$ and $T$ are incomparable.  If $S_i \prec T$, then necessarily $T \in \max(N)$, so $f$ must restrict to the unique map $Q_{\gen{S_i}} \to Q_{\gen{T}} \subseteq Q_N$ from the principal case.  If they are incomparable, then again $S_i \in \max(N)$ and $Q_{\gen{S_i}}$ is a subquiver of $Q_N$.  In this case $f$ must restrict to the inclusion from $Q_{\gen{S_i}}$ to $Q_N$.

Now suppose $M \nleq N$, and let
\[
\max(M) = \{S_1, \dotsc, S_m\} \qquad \text{and} \qquad \max(N) = \{T_1, \dotsc, T_n \}.
\]
To be compatible with the structure maps, any morphism $f\colon Q_M \to Q_N$ over $Q$ necessarily satisfies $f^{-1}(\sigma_N) = \{\sigma_M\}$, hence would restrict to a morphism
\[
\overline{f} \in \Hom_{\downarrow P} (\coprod_{i=1}^m P_{S_i}, \coprod_{j=1}^n P_{T_j})
\]
over $P$.  But $M \nleq N$ implies that there exists some $i$ such that $S_i \nleq T_j$ for all $j$, so by induction $\Hom_{\downarrow P} (P_{S_i}, P_{T_j}) = \emptyset$ for all $j$.  Hence such an $f$ does not exist over $Q$, and we get $\Hom_{\overq} (Q_M, Q_N) = \emptyset$.

\gluecase We can write $M = (X, Y)$ and $N = (Z, W)$ for some $X, Z \in \latt{P}{\sigma}$ and $Y, W \in \latt{S}{\sigma}$.  By induction, we immediately get (a).
For b), suppose $M \prec N$ and that $f \in \Hom_{\overq} (Q_M, Q_N)$.  Then by switching $P$ and $S$ if necessary, we can assume that $X \prec Z$ and $Y = W$.  Then by induction $f$ must restrict to the unique map $P_X \xto{\rho_{X, Z}} P_Z$ over $P$, and over $S$ to the identity on $S_Y$.  This defines $\rho_{M, N}$ uniquely over $Q$.
Now suppose $M \nleq N$, so without loss of generality $X \nleq Z$.  Any $f \in \Hom_{\overq}(Q_M, Q_N)$ would restrict to some $\overline{f} \in \Hom_{\downarrow P} (P_X, P_Z)$, but by induction such a morphism does not exist.  Hence there is no such $f$, and so $\Hom_{\overq} (Q_M, Q_N) = \emptyset$.
\end{proof}

\subsection{Induced rank functors}

For each $M \in \latt{Q}{x} \subset L_Q$, we get a rank functor $\rank_M$ on $Q$ by pulling back a representation along $c_M$, then applying the global rank functor of $Q_M$:
\[
\rank_M V := \rank_{Q_M} \left( c_M^* V \right) \subseteq V_x .
\]
The vector space $\rank_M V$ is naturally a subspace of $V_x$ because Lemma \ref{rootedtreeranklemma} gives an inclusion
\[
\rank_{Q_M} \left( c_M^* V \right) \subseteq (c_M^* V)_{\sigma_M} = V_{c_M (\sigma_M)} = V_x .
\]
We call these subspaces \textbf{rank spaces} of $V$, always considering them as subspaces of some appropriate $V_x$ without explicit mention.

The inductive definition of $\latt{Q}{\sigma}$, along with Lemma \ref{glueextranklemma}, provides the following inductive description of $\rank_M$:  suppose that removing $\sigma$ from $Q$ leaves a disjoint union of rooted trees $(Q_i, \sigma_i)$, so that $M = (M_i)$ for some ideals $M_i \subseteq \latt{Q_i}{\sigma_i}$.  Let $\alpha_i$ be the unique arrow from $\sigma_i$ to $\sigma$.  Then we have that
\[
\rank_M V = \bigcap_i \bigcap_{N \in \max(M_i)} V_{\alpha_i} (\rank_N (V|_{Q_i})) .
\]

\begin{example}
For each vertex $x \in Q\verts$, the reduced quiver $Q_{\hat{0}_x}$ corresponding to the minimal element $\hat{0}_x \in \latt{Q}{x}$ is the inclusion of the vertex $x$ as a subquiver of $Q$, and hence $\rank_{\hat{0}_x}(V) =V_x$.  For the maximal element $\hat{1}_x$, we get $Q_{\hat{1}_x} = Q\tox$ and the structure map is inclusion, hence $\rank_{\hat{1}_x}(V) = \rank_{Q\tox} (V |_{Q\tox})$.
In fact, Lemma \ref{glueextranklemma} and induction imply that for every connected subquiver $P \subseteq Q$, the rank functor $\rank_P$ (applied to the restriction $V|_P$) appears among the functors $\{ \rank_M \}_{M \in L_Q}$.
\end{example}

The relations between induced rank functors given by Lemma \ref{rankinclusionlemma} motivates the following definition.

\begin{definition}
Let $f\colon (Q', \sigma') \to Q$ be a quiver over $Q$.  We will say that $Q' \xto{f} Q$ is \textbf{rank equivalent} to $Q_M$ (for some $M \in L_{Q}$) when there exist morphisms 
\[
Q_M \xto{g} Q' \xto{h} Q_M
\]
as quivers over $Q$ such that $h \circ g = id_{Q_M}$.
\end{definition}

By Lemma \ref{rankinclusionlemma}, if $Q' \xto{f} Q$ is rank equivalent to $Q_M$ then it induces the same rank functor on $Q$:
\[
\rank_{Q'} (f^*(V)) = \rank_M V \subseteq V_{f(\sigma')} .
\]
Furthermore, in such a rank equivalence $g$ is injective on vertices and arrows, so the number of arrows in $Q_M$ is less than or equal to the number of arrows in $Q'$, and if these numbers are equal then $g$ is an isomorphism of quivers over $Q$.  This explains the terminology ``reduced'' quivers over $Q$.
Now to see that for any representation $V$, rank functors give an order reversing map from the lattice $\latt{Q}{x}$ to the collection of subspaces of $V_x$, partially ordered by inclusion.

\begin{prop}\label{rankspaceorderprop}
Let $M, N \in \latt{Q}{x}$ and $V \in \repq$.  Then we have:
\begin{enumerate}[(a)]
\item If $N \geq M$, then $\rank_N V \subseteq \rank_M V$ as subspaces of $V_x$.
\item Join in $\latt{Q}{x}$ corresponds to intersection in $V_x$, i.e.
\[
\rank_{M \vee N} V = \rank_M V \cap \rank_N V . 
\]
\end{enumerate}
\end{prop}
\begin{proof}
Part (a) follows from Proposition \ref{covermapprop} and Lemma \ref{rankinclusionlemma}.
Part (b) is proven by induction.

\extcase Considering $M, N$ as ideals in $\latt{P}{\tau}$, let $\max(M) = \{S_1, \dotsc, S_m \}$ and $\max(N) = \{T_1, \dotsc, T_n \}$.  Then we can apply Lemma \ref{glueextranklemma}, keeping in mind the gluing construction of $Q_M$ and $Q_N$, to get
\[
\rank_M V \cap  \rank_N V = \left( \bigcap_{i=1}^m \rank_{\gen{S_i}} V \right) \cap \left(\bigcap_{i=1}^n \rank_{\gen{T_i}} V \right) . 
\]
By part (a), we only need to intersect over the maximal elements of $\latt{P}{\tau}$ appearing here.  Now since $\gen{S_1, \dotsc , S_m } \cup \gen{T_1 , \dotsc , T_n } = M \vee N$, we get that
\[
\rank_M V \cap  \rank_N V = \bigcap_{A \in \max(M \vee N)} \rank_{\gen{A}} V = \rank_{M \vee N} V .
\]

\gluecase  If we write $M = (X, Y)$ and $N = (Z, W)$ for some $X,Z \in \latt{P}{\sigma}$ and $Y,W \in \latt{S}{\sigma}$, then we have $M \vee N = (X \vee Z,\, Y \vee W)$, so the result follows from Lemma \ref{glueextranklemma} and the induction hypothesis.
\end{proof}

Now we will see that the reduced quivers over $Q$ constructed above give all possible rank functors induced by pullback to a rooted tree.

\begin{theorem}\label{rankequivthm}
Let $(Q, \sigma)$ be a rooted tree, and $f\colon (Q', \sigma') \to Q$ a rooted tree over $Q$. Then there exists $M \in L_Q$ such that  $Q' \xto{f} Q$ is rank equivalent to $Q_M$.
\end{theorem}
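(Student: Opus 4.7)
I would prove this by induction on the number of vertices of $Q$, following the standard inductive pattern (extension/gluing) used throughout Section \ref{rankfunctorsect}. The base case where $Q$ has a single vertex is trivial: $f$ must send every vertex of $Q'$ to $\sigma$, forcing $Q' = Q$ (since $Q$ has no arrows), and $M = \hat{1}$ works. For the inductive step, I first reduce to the case $f(\sigma') = \sigma$: if $f(\sigma') = x \neq \sigma$, then every path in $Q'$ terminating at $\sigma'$ has image terminating at $x$, so $f$ factors through $Q\tox \subsetneq Q$. Induction on this smaller target yields $M \in \latt{Q\tox}{x} = \latt{Q}{x} \subseteq L_Q$ such that $f \colon Q' \to Q\tox$ is rank equivalent to $(Q\tox)_M = Q_M$, and this equivalence remains valid when viewed as one of quivers over $Q$.

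Assume now $f(\sigma') = \sigma$. If $Q$ is glued from $P$ and $S$ at $\sigma$, partition the arrows of $Q'$ with head $\sigma'$ by whether their images lie in $P\arrows$ or $S\arrows$, producing a decomposition $Q' = Q'_P \cup_{\sigma'} Q'_S$ with restrictions $Q'_P \to P$ and $Q'_S \to S$. The tree structure forces $\sigma'$ to be the unique vertex of $Q'_P$ (resp.\ $Q'_S$) mapping to $\sigma$. By induction (since $P$ and $S$ each have fewer vertices than $Q$) there exist $X \in \latt{P}{\sigma}$ and $Y \in \latt{S}{\sigma}$ with rank equivalences between $P_X$ and $Q'_P$, and between $S_Y$ and $Q'_S$. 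The equivalence maps necessarily send the unique sink-preimages to each other, so they glue at the common sink to give a rank equivalence between $Q_{(X,Y)}$ and $Q'$ over $Q$.

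If instead $Q$ is the extension of $P$ by an arrow $\alpha \colon \tau \to \sigma$, let $\beta_1, \dotsc, \beta_k$ be the arrows of $Q'$ with head $\sigma'$ (each necessarily mapping to $\alpha$), and let $Q'_i$ be the maximal subquiver of $Q'$ with sink $t\beta_i$. Induction applied to each $Q'_i \to P$ yields $N_i \in \latt{P}{\tau}$ and rank equivalence maps $g_i \colon P_{N_i} \to Q'_i$ and $h_i \colon Q'_i \to P_{N_i}$. The claim is that $Q' \to Q$ is rank equivalent to $Q_M$, where $M := \gen{N_1, \dotsc, N_k} \in J(\latt{P}{\tau}) = \latt{Q}{\sigma}$. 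For each $N \in \max(M)$, pick $i(N)$ with $N_{i(N)} = N$, and for each $i$ pick $N(i) \in \max(M)$ above $N_i$, arranged so that $N(i(N)) = N$. The map $g \colon Q_M \to Q'$ is built by gluing, over $N \in \max(M)$, the extension of $g_{i(N)}$ that sends the extra vertex of $Q_{\gen{N}}$ to $\sigma'$ and the extra arrow to $\beta_{i(N)}$. Dually, $h \colon Q' \to Q_M$ comes from composing each $h_i$ with the morphism $P_{N_i} \to P_{N(i)}$ provided by Proposition \ref{covermapprop}, extending by $\beta_i \mapsto$ the extension arrow of $Q_{\gen{N(i)}}$ and $\sigma' \mapsto \sigma_M$, then gluing at $\sigma'$. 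The identity $h \circ g = id_{Q_M}$ then reduces on each component $Q_{\gen{N}} \subseteq Q_M$ to $h_{i(N)} \circ g_{i(N)} = id_{P_N}$ from the inductive step.

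The main obstacle is the bookkeeping in this extension case: one must coordinate the choices of $i(N)$ and $N(i)$ so that the composition $h \circ g$ genuinely restores the identity on each $Q_{\gen{N}}$, and verify that the individual pieces glue consistently at the shared sink $\sigma_M$ (which ultimately uses that a reduced quiver $P_N$ has a unique vertex mapping to $\tau$, namely its sink). The gluing case is comparatively clean, since the two coordinates in $\latt{Q}{\sigma} = \latt{P}{\sigma} \times \latt{S}{\sigma}$ are treated independently.
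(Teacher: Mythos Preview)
Your proof is correct and follows essentially the same approach as the paper's: induction on the number of vertices of $Q$, reduction to the case $f(\sigma') = \sigma$, and then the extension/gluing split, with the extension case handled by applying the inductive hypothesis to each branch over $P$, taking $M$ to be the ideal generated by the resulting elements of $\latt{P}{\tau}$, and using Proposition~\ref{covermapprop} to route the non-maximal branches into the maximal ones. The only cosmetic difference is that the paper first isolates the sub-case where $Q'$ itself is a one-point extension before treating the general extension case, whereas you handle all $k$ branches at once; your bookkeeping with $i(N)$ and $N(i)$ (subject to $N(i(N)) = N$) is exactly the paper's choice of $\rho_i$ and the subset $J \subseteq I$.
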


\begin{proof}
The idea is to use induction with Lemma \ref{glueextranklemma}.  When $Q$ has one vertex, a representation of $Q$ is a vector space and the identity functor is the only rank functor on $Q$.
If $Q$ has more than one vertex, assume that the proposition holds for any quiver with fewer vertices.
By induction on the number of vertices of $Q$, we can reduce to the case that $f(\sigma') = \sigma$.

\extcase First, assume that $Q'$ is a one point extension of $(P', \tau')$ by an arrow $\alpha'$, so $f(\sigma') = \sigma$ implies that $f(\alpha') = \alpha$ and $f(\tau') = \tau$.
The induction hypothesis gives $S \in \latt{P}{\tau}$ and morphisms of quivers over $P$
\[
P_S \xto{\tilde{g}} P' \xto{\tilde{h}} P_S
\]
such that $\tilde{h} \circ \tilde{g} = id$.
These morphisms uniquely extend to maps $Q_{\gen{S}} \xto{g} Q' \xto{h} Q_{\gen{S}}$ over $Q$, satisfying $h \circ g = id$.

Now an arbitrary $Q'$ can be written as $Q' = \coprod_{i \in I}^\sigma Q'_i$, where each $Q'_i$ is a one point extension of some $(P'_i, \tau'_i)$.  Using the previous case, for each $i \in I$ we have $S_i \in \latt{P}{\tau}$ and morphisms of quivers over $Q$
\[
Q_{\gen{S_i}} \xto{\tilde{g}_i} Q'_i \xto{\tilde{h}_i} Q_{\gen{S_i}}
\]
such that $\tilde{h_i} \circ \tilde{g_i} = id$.  The elements $\{S_i\}_{i \in I}$ generate an ideal $M \subseteq \latt{P}{\tau}$ which we claim corresponds to the desired reduced quiver over $Q$.
The ideal $M$ has maximal elements $\max(M) = \{S_i \}_{i \in J}$ for some (not necessarily unique) subset $J \subseteq I$.  For $i \in J$, define $\rho_i \colon Q_{\gen{S_i}} \xto{id} Q_{\gen{S_i}}$.
For $i \notin J$ we can choose some (not necessarily unique) $j(i) \in J$ such that $S_i \leq S_{j(i)}$, and so by Proposition \ref{covermapprop} there exists a morphism $\rho_i \colon Q_{\gen{S_i}} \to Q_{\gen{S_{j(i)}}}$ of quivers over $Q$.   The collection $\{\rho_i\}_{i \in I}$ induces a map
\[
\rho\colon {\coprod_{i \in I}}^\sigma Q_{\gen{S_i}} \to  {\coprod_{i \in J}}^\sigma Q_{\gen{S_i}} = Q_M
\]
of quivers over $Q$.  Then we get a commutative diagram of quivers over $Q$
\[
\xymatrix{
{\coprod}^\sigma_{i \in I} Q_{\gen{S_i}}	 \ar[r]^-{\coprod^\sigma \tilde{g}_i}	& Q' \ar[r]^-{\coprod^\sigma \tilde{h}_i} \ar[dr]		& {\coprod}^\sigma_{i \in I} Q_{\gen{S_i}} \ar[d]^{\rho}	\\
Q_M \ar@{^{`}->}[u]	 \ar[rr]^{id} \ar[ur] & 						& Q_M		\\
}
\]
with the lower triangle giving a rank equivalence between $Q'$ and $Q_M$.

\gluecase
If $Q$ is a gluing of $P$ and $S$, then $Q'$ is a gluing of $P' := f^{-1}P$ and $S' := f^{-1}S$. By restricting $f$, we see that $P'$ and $S'$ are quivers over $P$ and $S$, respectively.  Then by the induction hypothesis, we have $X \in \latt{P}{\sigma}$ and $Y \in \latt{S}{\sigma}$ and morphisms
\[
P_X \xto{\tilde{g}_P} P' \xto{\tilde{h}_P} P_X
\qquad \text{and} \qquad
S_Y \xto{\tilde{g}_S} S' \xto{\tilde{h}_S} P_Y
\]
that satisfy $\tilde{h}_P \circ \tilde{g}_P = id$ and $\tilde{h}_S \circ \tilde{g}_S = id$.
These induce a rank equivalence between $Q'$ and $Q_M=Q_{(X, Y)}$.
\end{proof}

This shows that there are only finitely many distinct rank functors on $Q$ induced by global rank functors of rooted trees over $Q$.

\begin{example}\label{nsubspaceeg}
Generalizing Example \ref{exampleone}: let $(Q, \sigma)$ be the $n$-subspace quiver, labeled as
\[
\nsubspacemaps{1}{2}{n}{\sigma}{a_1}{a_2}{a_n} . 
\]
Then $\latt{Q}{x}$ has one element for $x \neq \sigma$, and $\latt{Q}{\sigma}$ is the lattice $B_n$ of subsets of $\{1, \dotsc, n \}$.  The reduced quivers over $Q$ are exactly the connected subquivers of $Q$.  The rank functor corresponding to $J \subseteq \{1, \dotsc, n \}$ sends $V \in \repq$ to
\[
\rank_J V = \bigcap_{j \in J} \im V_{a_j} . 
\]
\end{example}

\begin{example}\label{latticeseg}
Continuing with $Q$ as in Example \ref{exampletwo}, we have that $\latt{Q}{\tau} = \latt{P}{\tau} = B_3$, and $\latt{Q}{\sigma} = J(B_3)$.  The Hasse diagrams (with smaller elements drawn towards the top) are illustrated in Figure ~\ref{fig:bigfig} on page ~\pageref{fig:bigfig}.
\begin{figure}
\[
\latt{Q}{\tau} = \vcenter{\xymatrix{
			& {\tau}	\ar@{-}[dl] \ar@{-}[d] \ar@{-}[dr] \\
{1\tau} \ar@{-}[d] \ar@{-}[dr]		& {2\tau}	\ar@{-}[dl] \ar@{-}[dr] & {3\tau} \ar@{-}[d] \ar@{-}[dl]\\
{12\tau} \ar@{-}[dr]& {13\tau} \ar@{-}[d] & {23\tau} \ar@{-}[dl] \\
			& {123\tau} }}
\qquad
\latt{Q}{\sigma} = \vcenter{\xymatrix{
			& {\sigma}	\ar@{-}[d] \\
			& {\tau \sigma}	\ar@{-}[dl] \ar@{-}[d] \ar@{-}[dr] \\
{1\tau \sigma} \ar@{-}[d] \ar@{-}[dr]	& {2\tau \sigma} \ar@{-}[dl] \ar@{-}[dr] & {3\tau \sigma} \ar@{-}[d] \ar@{-}[dl]\\
{\bullet} \ar@{-}[d] \ar@{-}[drrr]	& {\bullet}	\ar@{-}[d]	\ar@{-}[drr] & {\bullet} \ar@{-}[d] \ar@{-}[dr]\\
{12\tau \sigma} \ar@{-}[d]		& {13\tau \sigma}	\ar@{-}[d]		& {23\tau \sigma}	\ar@{-}[d]		& {\bullet}	\ar@{-}[dlll] \ar@{-}[dll] \ar@{-}[dl] \\
{\bullet} \ar@{-}[d] \ar@{-}[dr] & {\bullet}	\ar@{-}[dl] \ar@{-}[dr] & {\bullet} \ar@{-}[d] \ar@{-}[dl]\\
{\bullet} \ar@{-}[dr]& {\bullet} \ar@{-}[d] & {\bullet} \ar@{-}[dl] \\
			& {\bullet}	\ar@{-}[d] \\
			& {123\tau \sigma}	}} . 
\]
\caption{}\label{fig:bigfig}
\end{figure}
The elements of the lattices that have been labeled are those whose corresponding rank functor is the global rank functor of some subquiver of $Q$.  The label is then the set of vertices in the corresponding subquiver.
\end{example}

It is interesting to note that the lattices $\latt{Q}{x}$ are always self-dual, which follows easily from the inductive definition.

\section{Reduced Representations of $Q$}
\label{reducedrepssect}

\subsection{Construction and first properties}
We turn our focus to studying a set of representations of $Q$, also indexed by $L_Q$, which are in some sense dual to the rank functors.

\begin{definition}
Let $Q$ be a rooted tree quiver and $M \in \latt{Q}{x}$.  Define a representation of $Q$ by
\[
\charrep_M := c_{M*} \id_{Q_M} 
\]
which will be called a \textbf{reduced representation} of $Q$.
\end{definition}

Such a representation is by definition a \textbf{tree module} in the sense of \cite{ringelexceptionalmodulestree}, defined over $\Z$ by 0-1 matrices.
In general, let $Q' \xto{c} Q$ be a quiver over $Q$, and $\{v_y\}_{y \in Q'\verts}$ a vector space basis for $\id_{Q'}$ such that $(\id_{Q'})_a (v_{ta}) = v_{ha}$ for every arrow $a$ of $Q'$.  Then this gives the representation $c_* \id_{Q'}$ a vector space basis $\{v_y \}_{y \in Q'\verts}$ such that at each vertex $x \in Q\verts$, we have
\[
\left( c_* \id_{Q'} \right)_x = \bigoplus_{y \in c_M^{-1}(x)} Kv_y . 
\]
We call such a basis $\{v_y\}$ a \textbf{standard basis} for $c_* \id_{Q'}$.

It is natural to ask what kind of morphisms exist between these reduced representations.  We saw in Proposition \ref{covermapprop} that the existence of morphisms between reduced quivers over $Q$, whose sinks lie over a common vertex $x \in Q\verts$, correspond to relations in the poset $\latt{Q}{x}$. One might wonder if the same phenomenon holds for reduced representations of $Q$, that is, if all homomorphisms between reduced representations are induced from morphisms of quivers over $Q$. This is not exactly true.  The basic problem with this comes from homomorphisms which are 0 at $x$, the sink of their supports. To get a nice correspondence we must ``stabilize'' the hom spaces, as defined below.

\begin{definition}
For $M, N \in \latt{Q}{x}$, the \textbf{stable hom space} between $\charrep_M$ and $\charrep_N$ is 
\[
\stabHom_Q (\charrep_M, \charrep_N) := \frac{\Hom_Q (\charrep_M, \charrep_N )}{ \setst{f}{f_x =0}} .
\]
\end{definition}

For a set $S$, let $K\gen{S}$ denote the free vector space on the elements of $S$.

\begin{theorem} \label{charrephomthm}
Let $\Lambda \xto{\lambda} Q$ and $\Gamma \xto{\gamma} Q$ be rooted tree quivers over $Q$.  A root preserving map $f \colon \Lambda \to \Gamma$ of quivers over $Q$ induces a morphism $\tilde{f} \colon \lambda_* \id_{\Lambda} \to \gamma_* \id_{\Gamma}$ in $\repq$ such that $\tilde{f}_x \neq 0$, where $x = \lambda (\sigma_\Lambda) = \gamma (\sigma_\Gamma)$.
This gives, for $M, N \in \latt{Q}{x}$, a surjective map of vector spaces 
\[
K \gen{ \Hom_{\overq}(Q_M, Q_N)} \onto \stabHom_Q (\charrep_M , \charrep_N) . 
\]
In particular, $\stabHom_Q (\charrep_M, \charrep_N) \neq 0$ if and only if $M \leq N$.
\end{theorem}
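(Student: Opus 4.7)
The plan is to organize the proof around the key observation that $(\charrep_M)_x$ and $(\charrep_N)_x$ are both one-dimensional: the structure map $c_M$ satisfies $c_M^{-1}(x) = \{\sigma_M\}$, so $\phi \mapsto \phi_x$ identifies $\stabHom_Q(\charrep_M, \charrep_N)$ with a subspace of $\Hom_K(Kv_{\sigma_M}, Kw_{\sigma_N}) \cong K$. Thus the stable hom is either $0$ or one-dimensional, and surjectivity of the map from $K\gen{\Hom_{\overq}(Q_M, Q_N)}$ reduces to showing that it is nonzero exactly when $\Hom_{\overq}(Q_M, Q_N)$ is.

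First I would construct $\tilde{f} \colon \lambda_*\id_\Lambda \to \gamma_*\id_\Gamma$ on standard bases by $\tilde{f}_p(v_y) := w_{f(y)}$ for each $y \in \lambda^{-1}(p)$; commutativity with the arrows of $Q$ is immediate from $f$ preserving heads and tails, and the root-preserving hypothesis yields $\tilde{f}_x(v_{\sigma_\Lambda}) = w_{\sigma_\Gamma} \neq 0$. (Equivalently, $\tilde{f}$ is the image of the evident morphism $\id_\Lambda \to f^*\id_\Gamma$ under the adjunction of Proposition \ref{adjointprop}.) For the easy direction, given $M \leq N$, Proposition \ref{covermapprop} (by composing cover maps $\rho_{M_{i-1},M_i}$) supplies a root-preserving $f \in \Hom_{\overq}(Q_M, Q_N)$; then $[\tilde{f}]$ spans the at-most-one-dimensional stable hom, and surjectivity together with $\stabHom \neq 0$ follow at once.

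It remains to show that if $M \not\leq N$ then every $\phi \colon \charrep_M \to \charrep_N$ satisfies $\phi_x = 0$; since Proposition \ref{covermapprop}(c) makes the source of the map empty in that case, surjectivity is then equivalent to this vanishing. I would proceed by induction on the number of vertices of $Q$ using the extension/gluing recursion of \S\ref{rootedtreesect}. The case $x \neq \sigma$ descends to $Q\tox \subsetneq Q$ because both $\charrep_M$ and $\charrep_N$ are supported there. In the gluing case $Q = P \cup S$ with $M = (X, Y)$ and $N = (Z, W)$, one verifies $\charrep_M|_P = \charrep_X^P$ and $\charrep_M|_S = \charrep_Y^S$ (and analogously for $N$); the restrictions $\phi|_P$ and $\phi|_S$ have common $\sigma$-component equal to $\phi_x$, so $\phi_x \neq 0$ forces $X \leq Z$ and $Y \leq W$ by induction, contradicting $M \not\leq N$.

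The main obstacle is the extension case, where $Q$ extends $P$ by $\alpha \colon \tau \to \sigma$ and $\max(M) = \{S_1, \dotsc, S_m\}$, $\max(N) = \{T_1, \dotsc, T_n\}$ in $\latt{P}{\tau}$. Here the restrictions split nontrivially as $\charrep_M|_P = \bigoplus_i \charrep_{S_i}^P$ and $\charrep_N|_P = \bigoplus_j \charrep_{T_j}^P$ (using the gluing description of $Q_M$), and $(\charrep_M)_\alpha$ sends each basis vector $v_{\tau_{S_i}}$ to $v_{\sigma_M}$. Denoting the $\tau$-component of the block $\phi_{ij} \colon \charrep_{S_i}^P \to \charrep_{T_j}^P$ by a scalar $\beta_{ij}$ (valid by one-dimensionality of these reduced representations at $\tau$), commutativity of $\phi$ along $\alpha$ forces $\sum_j \beta_{ij} = \phi_x$ for every $i$. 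If $\phi_x \neq 0$, then for each $i$ some $\beta_{ij}$ is nonzero; the inductive hypothesis on $P$ then gives $S_i \leq T_j$ in $\latt{P}{\tau}$. Hence every maximal element of $M$ lies in $N$, so $M \subseteq N$, again contradicting $M \not\leq N$, and completing the proof.
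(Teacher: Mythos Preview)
Your proof is correct and takes a genuinely different, and in fact more economical, route than the paper.

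The key difference is your opening observation: since $c_M^{-1}(x)=\{\sigma_M\}$ and $c_N^{-1}(x)=\{\sigma_N\}$, the map $\phi\mapsto\phi_x$ embeds $\stabHom_Q(\charrep_M,\charrep_N)$ into $\Hom_K(K,K)\cong K$, so the stable hom is at most one-dimensional. This reduces the surjectivity statement to a dichotomy: either $\stabHom=0$, or any single nonzero class spans it. You then feed Proposition~\ref{covermapprop} directly into this dichotomy, and the only work left is the inductive vanishing argument for $M\not\leq N$, which you handle cleanly (your extension-case step, reading off $\sum_j\beta_{ij}=\kappa$ from the commutativity over $\alpha$ and invoking the inductive hypothesis on $P$ to force each $S_i\leq T_j$, is exactly right).

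The paper does not use the one-dimensionality observation. Instead it proves surjectivity by an explicit inductive construction: given an arbitrary $\overline f\in\stabHom_Q(\charrep_M,\charrep_N)$, it writes down a specific $K$-linear combination of classes $\overline{(g_{1j_1}^{k_1},\dotsc,g_{mj_m}^{k_m})}$ induced by quiver maps (with coefficients involving products of the $\lambda_{ij}^k$ obtained inductively over $P$) and verifies by a combinatorial identity that this sum equals $\overline f$. This is substantially more laborious, but it yields an explicit formula rather than a dimension count. Since the paper never uses that explicit formula afterwards (only the abstract surjectivity and the ``in particular'' clause are invoked, e.g.\ in Corollary~\ref{indecompcor}), your argument would serve equally well for all downstream applications.

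One minor point: your phrase ``commutativity with the arrows of $Q$ is immediate'' hides the same small check the paper spells out---namely that for $y\neq\sigma_\Lambda$ the unique outgoing arrow $b$ at $y$ must lie over $a$, and that root-preservation forces $f(y)\neq\sigma_\Gamma$ so the same uniqueness applies in $\Gamma$. Your parenthetical adjunction argument is a clean alternative.
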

\begin{proof}
Let $\{v_y \}$ and $\{w_y \}$ be standard bases for $\lambda_* \id_\Lambda$ and $\gamma_* \id_\Gamma$, respectively.  Define $\tilde{f}(v_y) := w_{f(y)}$, which gives a map of vector spaces from $\lambda_* \id_\Lambda$ to $\gamma_* \id_\Gamma$.  To see that this is a morphism of quiver representations, suppose that $a\in Q\arrows$ is an arrow in $Q$, and fix $y \in  \lambda^{-1}(ta)$.  We need to show that 
\[
\tilde{f} \circ (\lambda_* \id_\Lambda)_a (v_y) = (\gamma_* \id_\Gamma)_a \circ \tilde{f} (v_y) . 
\]
The vertex $ta$ is not a sink, so the assumption that $f$ is root preserving implies that $y$ is not a sink either.  Then since $\Lambda$ is a rooted tree, there is a unique arrow $b$ with $tb = y$.  Then by definition, we have $\tilde{f} \circ (\lambda_* \id_\Lambda)_a (v_y) = \tilde{f} (v_{hb}) = w_{f(hb)}$.
On the other hand, $\tilde{f} (v_y) = w_{f(y)}$ is by definition.  Similarly to the situation above, there is a unique arrow in $\Gamma$ with tail $f(y)$, and this arrow must be $f(b)$ since $b$ has nowhere else to map to.  The head of $f(b)$ must be $f(hb)$ since $f$ is a map of directed graphs.  Hence we have $(\gamma_* \id_\Gamma)_a (w_{f(y)}) = w_{f(hb)}$, so $\tilde{f}$ is a map of quiver representations.  To simplify the notation, we will drop the tilde throughout the rest of proof.
Since $f$ is root preserving, it takes the sink $\sigma_\Lambda$ of $\Lambda$ to the sink $\sigma_\Gamma$ of $\Gamma$.  Then $f_x (v_{\sigma_\Lambda}) = w_{\sigma_\Gamma}$ holds by definition, so $f_x$ is nonzero.

Now take $\Lambda = Q_M$ and $\Gamma = Q_N$ to be reduced quivers over $Q$, with $\{v_y \}$ and $\{w_y \}$ still standard bases as above.  We show that the induced map is surjective, by induction.  Given $\overline{f}$ in the stable hom space, choose a representative $f \in \Hom_Q (\charrep_M, \charrep_N)$ of $\overline{f}$.  Define $\kappa \in K$ by $f_\sigma (v_\sigma) = \kappa w_\sigma$, which does not depend on the representative $f$ chosen.  We can assume $\kappa \neq 0$.

\extcase Let $\max(M) = \{S_1, \dotsc, S_m \}$ and $\max(N) = \{T_1, \dotsc, T_n \}$.  Then $f$ restricts over $P$ to
\[
f|_P = \sum_{ij} f_{ij} \in \Hom_P \left( \bigoplus_i \charrep_{S_i}, \bigoplus_j \charrep_{T_j} \right)
\]
with each $f_{ij} \in \Hom_P (\charrep_{S_i}, \charrep_{T_j} )$.  By induction, there exists for each pair $(i, j)$ a collection
\[
g_{ij}^k \in \Hom_{\downarrow P} (P_{S_i}, P_{T_j} ) \qquad 1 \leq k \leq d(i,j)
\]
and scalars $\lambda_{ij}^k \in K$ such that
\[
\overline{f_{ij}} = \sum_{k=1}^{d(i,j)} \lambda_{ij}^k \overline{g_{ij}^k} . 
\]
Standard bases of $\charrep_M$ and $\charrep_N$ restrict to standard bases of each $\charrep_{S_i}$ and $\charrep_{T_j}$.  After normalizing the maps induced by the $g_{ij}^k$ to be compatible with these standard bases at $\tau$, we can assume that
\begin{equation}\label{lambdasumeqn}
\sum_j \sum_{k = 1}^{d(i,j)} \lambda_{ij}^k = \kappa . 
\end{equation}
We can identify $\Hom_{\overq} (Q_M, Q_N) = \Hom_{\downarrow P} (\coprod_i P_{S_i}, \coprod_j P_{T_j})$, because any element of the right hand side extends uniquely over the extending arrow $\alpha$ to a morphism on the left hand side.
Thus, we can give an element of $\Hom_{\overq} (Q_M, Q_N)$
in terms of quivers over $P$ by specifying an $m$-tuple $(j_1, \dotsc, j_m)$ with $1 \leq j_i \leq n$, and a sequence of $k_i$'s with $1 \leq k_i \leq d(i,j_i)$
\[
(g_{1 j_1}^{k_1}, \dotsc, g_{m j_m}^{k_m}) \in \Hom_{\overq} (Q_M, Q_N) . 
\]

We will show that
\begin{equation}\label{claimedfeqn}
\overline{f} = \kappa^{1-m} \sum_{(j_1, \dotsc, j_m)} \sum_{(k_1, \dotsc, k_m)} \left[ \left( \prod_{i=1}^m \lambda_{i j_i}^{k_i} \right) \overline{( g_{1 j_1}^{k_1}, \dotsc, g_{m j_m}^{k_m})} \right]
\end{equation}
where the first two sums are indexed as above.  The key is to see that
\begin{equation}\label{gammameqn}
\sum_{(j_1, \dotsc, j_m)} \sum_{(k_1, \dotsc, k_m)} \left( \prod_{i=1}^m \lambda_{i j_i}^{k_i} \right) = \kappa^m
\end{equation}
holds in the field $K$, which can be shown by an easy induction on $m$, using the assumption of equation (\ref{lambdasumeqn}). Now over $P$, the map induced on representations by $( g_{1 j_1}^{k_1}, \dotsc, g_{m j_m}^{k_m})$ restricts to the sum 
\[
\sum_i g_{i j_i}^{k_i} \in \Hom_P \left( \bigoplus_i \charrep_{S_i}, \bigoplus_j \charrep_{T_j} \right) . 
\]
Using this we can compute the coefficient of $\overline{g_{ab}^c}$ in the restriction of the right hand side of equation (\ref{claimedfeqn}) to $P$.  By factoring out $\lambda_{ab}^c$, which always appears when $\overline{g_{ab}^c}$ does, we get
\[
\kappa^{1-m} \lambda_{ab}^c \sum_{(j_1, \dotsc, \hat{j_a}, \dotsc, j_m)} \sum_{(k_1, \dotsc, \hat{k_a}, \dotsc, k_m)} \prod_{i \neq a} \lambda_{i j_i}^{k_i} = \kappa^{1-m} \lambda_{ab}^c \kappa^{m-1} = \lambda_{ab}^c
\]
where $\hat{j_a}$ and $\hat{k_a}$ mean to omit those indices. Then the next to last equality follows from equation (\ref{gammameqn}).  Hence the right hand side of equation (\ref{claimedfeqn}) agrees with $\overline{f}$ over $P$.  But at $\sigma$, each map $( g_{1 j_1}^{k_1}, \dotsc, g_{m j_m}^{k_m})$ sends $v_\sigma$ to $w_\sigma$, and so the right hand side of equation (\ref{claimedfeqn}) maps $v_\sigma$ to
\[
\kappa^{1-m} \sum_{(j_1, \dotsc, j_m)} \sum_{(k_1, \dotsc, k_m)} \left( \prod_{i=1}^m \lambda_{i j_i}^{k_i} \right) w_\sigma = \kappa^{1-m} \kappa^m w_\sigma = \kappa w_\sigma
\]
which agrees with $\overline{f}$ also.  So equation (\ref{claimedfeqn}) holds, expressing $\overline{f}$ as a linear combination of morphisms induced by maps of quivers.

\gluecase  Let $M = (X, Y)$ and $N=(Z, W)$.  By induction, we can write
\[
\overline{f} |_P = \sum_i \lambda_i \overline{g_i} \qquad \qquad \overline{f}|_S = \sum_j \mu_j \overline{h_j} \qquad \lambda_i, \mu_j \in K
\]
for some collection of $g_i \in \Hom_{\downarrow P} (P_X, P_Z)$ and $h_j \in \Hom_{\downarrow S} (S_Y, S_W)$.  Note that, since each $g_i$ and $h_j$ sends $v_\sigma$ to $w_\sigma$, we have that
\[
\sum_i \lambda_i = \sum_j \mu_j = \kappa
\]
in order for the restrictions over $P$ and $S$ to be equal at $\sigma$.  Let
\[
(g_i, h_j) \in \Hom_{\overq} (Q_M, Q_N) = \Hom_{\downarrow P} (P_X, P_Z) \times  \Hom_{\downarrow S} (S_Y, S_W)
\]
be the morphism of quivers over $Q$ defined by $g_i$ over $P$ and $h_j$ over $S$.  Then it is straightforward to check that
\[
\overline{f} = \kappa^{-1} \sum_{ij} \lambda_i \mu_j \overline{(g_i, h_j)}
\]
which completes the proof.
\end{proof}

One may note the similarity in spirit of this theorem to a theorem of Crawley-Boevey \cite{crawleyboeveyzerorelation} on morphisms between tree modules over zero-relation algebras.
As a corollary, we get an alternative characterization of reduced quivers over $Q$.

\begin{corollary}\label{indecompcor}
If $Q' \xto{c} Q$ is a rooted tree quiver over $(Q, \sigma)$, then $c_* \id_{Q'}$ is indecomposable if and only if $Q'$ is a reduced quiver over $Q$.  Thus, the reduced representations $\charrep_M$ are indecomposable and pairwise non-isomorphic.
\end{corollary}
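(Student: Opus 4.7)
The plan is to handle the three assertions separately, using Theorems~\ref{rankequivthm} and~\ref{charrephomthm} as the main ingredients.

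For the direction where $Q'$ is not reduced: apply Theorem~\ref{rankequivthm} to produce a reduced $Q_M$ together with morphisms $g\colon Q_M \to Q'$ and $h\colon Q' \to Q_M$ of quivers over $Q$ satisfying $h\circ g = \id_{Q_M}$. Since $Q'$ is not reduced while $Q_M$ is, the remarks after the definition of rank equivalence force $|Q'\verts| > |(Q_M)\verts|$. Theorem~\ref{charrephomthm} together with functoriality of pushforward produces induced morphisms $\tilde g$ and $\tilde h$ of representations with $\tilde h \circ \tilde g = \id_{\charrep_M}$, so $e := \tilde g \circ \tilde h$ is an idempotent endomorphism of $c_*\id_{Q'}$ whose image $\tilde g(\charrep_M)$ is a proper subrepresentation. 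Hence $c_*\id_{Q'} = \im e \oplus \ker e$ is a nontrivial decomposition.

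For the converse, assume $Q' = Q_M$ is reduced and set $x := c_M(\sigma_M)$. Combining Theorem~\ref{charrephomthm} with Proposition~\ref{covermapprop}(a), which yields $\Hom_{\overq}(Q_M, Q_M) = \{\id\}$, gives $\stabHom_Q(\charrep_M, \charrep_M) = K\cdot\overline{\id}$; since $(\charrep_M)_x$ is one-dimensional, the natural map $\Hom_Q(\charrep_M, \charrep_M) \to K$ sending $f \mapsto f_x$ is surjective with kernel $I$, so $\Hom_Q(\charrep_M, \charrep_M)/I \cong K$. It suffices to show $I$ is nilpotent, for then $I$ is the Jacobson radical and the endomorphism ring is local. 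I would prove nilpotency by induction on $|Q\verts|$ along the extension/gluing recursion of Section~\ref{rootedtreesect}. In the gluing case $Q_M = P_X \cup^\sigma S_Y$, the restrictions $f|_P$ and $f|_S$ of $f \in I$ lie in the inductively nilpotent ideals on each side and agree at $\sigma$, so the inductive hypothesis closes the case. In the extension case with $\max M = \{S_1, \dotsc, S_m\}$ and $\charrep_M|_P = \bigoplus_i \charrep_{S_i}$, one expresses $f|_P$ as a block matrix of elements of $\Hom_P(\charrep_{S_i}, \charrep_{S_j})$; compatibility of $f$ at the extending arrow $\alpha$ ties the diagonal values $(f_{ii})_\tau$ to $f_\sigma = 0$, while incomparability of distinct $S_i$, $S_j$ forces $(f_{ij})_\tau = 0$ via Theorem~\ref{charrephomthm}, so $f_\tau = 0$. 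The inductive hypothesis on each block then propagates the nilpotency to $f$.

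The main obstacle is the bookkeeping in the extension case: although $\stabHom_P(\charrep_{S_i}, \charrep_{S_j}) = 0$ for incomparable $S_i, S_j$, the full $\Hom$-space can contain morphisms that vanish only at $\tau$, and products of such off-diagonal components across the $m$ summands must be controlled (for instance by an invariant such as the depth of $Q_M$) in order to establish a uniform bound on the nilpotency index of $I$.

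Finally, for pairwise non-isomorphism: the support of $\charrep_M$ is a connected rooted subtree of $Q$ with unique sink $x_M$, so any isomorphism $\charrep_M \cong \charrep_N$ forces $x_M = x_N =: x$. The isomorphism is nonzero at $x$ (both vertex spaces are one-dimensional there), giving nonzero classes in $\stabHom_Q(\charrep_M, \charrep_N)$ and $\stabHom_Q(\charrep_N, \charrep_M)$. Theorem~\ref{charrephomthm} then yields $M \le N$ and $N \le M$ in $\latt{Q}{x}$, whence $M = N$.
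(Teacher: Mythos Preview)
Your treatment of the ``not reduced $\Rightarrow$ decomposable'' direction and of pairwise non-isomorphism is essentially the same as the paper's, and is correct.

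For the converse (reduced $\Rightarrow$ indecomposable) you take a genuinely different route: you try to prove that $\End_Q(\charrep_M)$ is local by showing that the ideal $I = \{f : f_x = 0\}$ is nilpotent. The paper instead argues directly against a putative decomposition. Assuming $\charrep_M \simeq V \oplus W$ with $W_\sigma = 0$, the paper compares two direct-sum decompositions of $\charrep_M|_P$ into copies of the $\charrep_{S_i}$: the standard one, for which the map over $\alpha$ is nonzero on every summand, and one compatible with $V \oplus W$. The stable Hom computation forces the change-of-basis matrix at $\tau$ to be diagonal, and then commutativity over $\alpha$ gives a contradiction. This avoids any analysis of the full endomorphism ring.

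The gap you flag in your approach is real but easily closed. In the extension case you have shown that restriction to $P$ sends $I$ into the ideal $J = \{g \in \End_P(\bigoplus_i \charrep_{S_i}) : g_\tau = 0\}$. By the induction hypothesis the $\charrep_{S_i}$ are indecomposable and pairwise non-isomorphic, so $\End_P(\bigoplus_i \charrep_{S_i})/\mathrm{rad} \cong K^m$, and the radical consists precisely of block matrices whose diagonal blocks lie in $\mathrm{rad}\,\End_P(\charrep_{S_i}) = \{h : h_\tau = 0\}$. Since the off-diagonal blocks automatically vanish at $\tau$ (because $\stabHom_P(\charrep_{S_i},\charrep_{S_j})=0$), one has $J = \mathrm{rad}\,\End_P(\bigoplus_i \charrep_{S_i})$. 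The Jacobson radical of a finite-dimensional algebra is nilpotent, so $J$ is nilpotent; since restriction to $P$ is injective on $I$, so is $I$. No depth bookkeeping is needed.
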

\begin{proof}
Suppose that $Q' \xto{c} Q$ is not reduced.  Then by Theorem \ref{rankequivthm}, there is some $M \in L_Q$ and there are maps $Q_M \xto{g} Q' \xto{h} Q_M$ of quivers over $Q$ such that $h \circ g = id$.  By Theorem \ref{charrephomthm}, this induces $\charrep_M \xto{\tilde{g}} c_* \id_{Q'} \xto{\tilde{h}} \charrep_M$ such that $\tilde{h} \circ \tilde{g} = id$, and so $\charrep_M$ is a direct summand of $c_* (\id_{Q'})$.

Now suppose that $Q' = Q_M$ is reduced.  To prove that $\charrep_M$ is indecomposable, we use induction and the theorem.

\extcase  Let $\max(M) = \{S_1, \dotsc, S_m \}$.  If $\charrep_M \simeq V \oplus W$ for some subrepresentations $V, W \subseteq \charrep_M$, then since $\dim_K (\charrep_M)_\sigma = 1$, without loss of generality we can assume $W_\sigma = 0$.  By construction, there is a decomposition $\charrep_M |_P \simeq \bigoplus_{i=1}^m U_i$ for some subrepresentations $U_i \simeq \charrep_{S_i}$, and by the induction hypothesis each of these summands is indecomposable.  Using a standard basis for $\charrep_M$, we can even take these $U_i$ such that $(\charrep_M)_\alpha ((U_i)_\tau) \neq 0$ for all $i$.  
But using the decomposition $\charrep_M \simeq V \oplus W$, and the fact that the indecomposable summands are uniquely determined, we can also find a decomposition $\charrep_M |_P \simeq \bigoplus_i X_i$ for some subrepresentations $X_i \simeq \charrep_{S_i}$, such that (after perhaps renumbering) $X_i \subseteq V|_P$ for $1 \leq i \leq k$, and $X_i \subseteq W|_P$ for $k+1 \leq i \leq m$.

Then an isomorphism $\charrep_M \simeq V \oplus W$ would give a commutative diagram 
\[
\vcenter{\xymatrix{
{\bigoplus}_i (U_i)_\tau \ar[r]^{A} \ar[d]^{B}	& K \ar[d]^{C}	\\
{\bigoplus}_i (X_i)_\tau \ar[r]^{D} 	& K} }
\]
over the extending arrow $\alpha$ such that both vertical maps are isomorphisms.
The theorem implies that, since the elements of $\{S_1, \dotsc, S_m \}$ are pairwise incomparable,
\[
\stabHom_P (\charrep_{S_i}, \charrep_{S_j}) =
\begin{cases}
K & i = j \\
0 & i \neq j \\
\end{cases} .
\]
Hence the matrix giving $B$ is diagonal, say with entries $\lambda_i$.  But since $W$ is a direct summand and $W_\sigma = 0$, we get $D \circ B \left( (U_{k+1})_\tau \right) = D \left( (X_{k+1})_\tau \right) = 0$, whereas $C \circ A \left( (U_{k+1})_\tau \right) = C \left( K \right) = K$.  Since the diagram is supposed to commute, this is a contradiction; hence no nontrivial direct sum decomposition of $\charrep_M$ exists.

\gluecase If $\charrep_M \simeq V \oplus W$ is a nontrivial decomposition, then it must restrict to a nontrivial decomposition over either $P$ or $S$.  But $\charrep_M$ restricts to some reduced representation on both $P$ and $S$, which by induction is indecomposable.  Hence $\charrep_M$ has no nontrivial decomposition.  Now pairwise non-isomorphic follows since $\stabHom_Q (\charrep_M, \charrep_N) \neq 0$ implies that $\stabHom_Q (\charrep_N, \charrep_M) = 0$ for $M \neq N$.
\end{proof}

\subsection{Combinatorial adjunctions and reduced representations}
Our goal is to gain some understanding of the structure of the representation ring $R(Q)$ through the representation rings $R(Q_M)$.  We have seen that the order relations in the lattices $\latt{Q}{\sigma}$ encode a lot of information about morphisms between quivers over $Q$, and morphisms between the reduced representations of $Q$.  So in order to connect the representation theory of $Q$ and $Q_M$, it is natural to seek some combinatorial connection between the lattices $\latt{Q}{\sigma}$ and $\latt{Q_M}{\sigma}$.  Summarily, we will see that for any $M \in \latt{Q}{\sigma}$, there is an \emph{adjunction} (sometimes called a \emph{Galois connection}) between the lattices $\latt{Q_M}{\sigma}$ and $\latt{Q}{\sigma}$.
Simply put, an \textbf{adjunction} is a pair of maps
\[
\xymatrix{ A \ar@<0.5ex>[r]^{\lambda} & B \ar@<0.5ex>[l]^{\rho}}
\]
between posets $A$ and $B$, such that 
\[
a \leq \rho(b) \iff \lambda (a) \leq b
\]
holds for all $a \in A, b \in B$.  We say that $(\lambda, \rho)$ are an adjoint pair with $\lambda$ the lower adjoint, and $\rho$ the upper adjoint.  The following proposition extracts from \cite[Prop.~3,4]{primergaloisconnections} a summary of what we'll need to use from the theory of adjunctions.

\begin{prop}
\label{adjunctionprop}
Suppose $\lambda \colon A \to B$ is any order preserving map between finite lattices.  If $\lambda$ preserves joins, then it has a unique upper adjoint given by
\begin{equation}\label{rightadjointeqn}
\rho ( b) = \max \setst{a \in A}{\lambda(a) \leq b}
\end{equation}
which, furthermore, preserves meets.
\end{prop}

Now fix $M \in \latt{Q}{\sigma}$.  Then for any $A \in \latt{Q_M}{\sigma}$, the composition of structure maps 
\[
(Q_M)_A \xto{c_A} Q_M \xto{c_M} Q 
\]
gives a quiver over $Q$.  By Theorem \ref{rankequivthm}, this quiver over $Q$ is rank equivalent to a unique reduced quiver over $Q$, which we'll denote by $Q_{\pi_* (A)}$.  This gives a map of sets
\[
\pi_* \colon \latt{Q_M}{\sigma} \to \latt{Q}{\sigma}
\]
such that
\[
\rank_{\pi_*(A)} = \rank_A \of c^*_M
\]
for $A \in \latt{Q_M}{\sigma}$.

\begin{prop}\label{quivadjunctionprop}
Let $(Q, \sigma)$ be a rooted tree quiver and $M \in \latt{Q}{\sigma}$.  Then there is an adjunction
\[
\xymatrix{ \latt{Q_M}{\sigma} \ar@<.5ex>[r]^{\pi_*}  & \ar@<.5ex>[l]^{\pi^*} \latt{Q}{\sigma} } .
\]
\end{prop}
\begin{proof}
To simplify the notation, let $c:= c_M$, $L' := \latt{Q_M}{\sigma}$, and $L := \latt{Q}{\sigma}$. If $A \leq B$ in $L'$, then there exists a map $\rho \colon (Q_M)_A \to (Q_M)_B$ of quivers over $Q_M$, by Proposition \ref{covermapprop}.  Composing with the structure map $c$ gives a map between them as quivers over $Q$, so using the definition of rank equivalence we get a sequence of maps over $Q$
\[
Q_{\pi_* (A)} \to (Q_M)_A \xto{\rho} (Q_M)_B \to Q_{\pi_* (B)}
\]
and hence $\pi_* (B) \geq \pi_* (A)$.  So $\pi_*$ is order preserving. Now we can simply calculate
\[
\rank_{\pi_*(A \vee B)} = \rank_{A \vee B} \of c^* = \rank_A \of c^* \cap \rank_B \of c^* \\
= \rank_{\pi_* (A)} \cap \rank_{\pi_* (B)} = \rank_{\pi_*(A) \vee \pi_* (B)}
\]
which shows that $\pi_*$ preserves the join operation.  By Proposition \ref{adjunctionprop},
\[
\pi^*(N) = \max \setst{A}{\pi_*(A) \leq N}
\]
is the upper adjoint to $\pi_*$.
\end{proof}

One can show that $(Q_M)_{\pi^* (N)}$ is the fiber product of $Q_N$ and $Q_M$ over $Q$.  This fact won't be proven, however, since it won't be needed in the paper.
Now in order to use this adjunction to inductively study the representation theory of $Q$, we need show that it is compatible with gluing and extension in an appropriate sense.

\begin{lemma}\label{adjunctioncompatlemma}
Fix $M \in \latt{Q}{\sigma}$ and let $(\pi_*, \pi^*)$ be the adjunction of Proposition \ref{quivadjunctionprop}. Suppose $Q$ is obtained from $P$ by extension, and that $\max (M) = \{S_1, \dotsc, S_m \} \subseteq \latt{P}{\tau}$.  For each $1 \leq i \leq m$, let $(\pi^i_*, \pi_i^*)$ be the adjunction of Proposition \ref{quivadjunctionprop} corresponding to $(P, \tau)$ and $S_i$.  Then, if we write $A  \in \latt{Q_M}{\sigma}$ as a product of ideals $(A_1, \dotsc, A_m) \in {\prod}_{i=1}^m J(\latt{P_{S_i}}{\tau})$,  we have
\begin{equation}\label{extpusheqn}
\pi_* (A) = \bigcup_{i=1}^m \pi_*^i (A_i) 
\end{equation}
(on the right hand side $\pi_*^i$ is the induced map on ideals as in \S\ref{combinsect}).
Similarly, for $N \in \latt{Q}{\sigma}$ we get
\begin{equation}\label{extpulleqn}
\pi^* (N) = (\pi^*_1 (N), \dotsc, \pi^*_m (N))
\end{equation}
where $N$ is considered as an ideal of $\latt{P}{\tau}$ on the right hand side.

When $Q$ is glued together from $P$ and $S$, and we write $M = (X, Y) \in \latt{P}{\sigma} \times \latt{S}{\sigma}$, then we have adjunctions $(\pi_*^P, \pi^*_P)$ and $(\pi_*^S, \pi^*_S)$ over $P$ and $S$, respectively.  In this case, for $(A, B) \in \latt{P_X}{\sigma} \times \latt{S_Y}{\sigma} = \latt{Q_M}{\sigma}$ we have
\begin{equation}\label{gluepusheqn}
\pi_* ((A, B)) = (\pi_*^P (A), \pi_*^S (B) ) , 
\end{equation}
and similarly, for $N = (Z, W)$ we find
\begin{equation}\label{gluepulleqn}
\pi^* (N) = (\pi^*_P (Z), \pi^*_S (W) ) .
\end{equation}
\end{lemma}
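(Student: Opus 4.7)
The plan is to verify the two pushforward equations (\ref{extpusheqn}) and (\ref{gluepusheqn}) directly from the definitions of $\pi_*$ and the reduced quivers, and then deduce the corresponding pullback equations (\ref{extpulleqn}) and (\ref{gluepulleqn}) from uniqueness of upper adjoints (Proposition \ref{adjunctionprop}).

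For the pushforward in the extension case, I would first unfold the recursive construction of $(Q_M)_A$. Since $Q_M$ is glued from the extensions $\{Q_{\gen{S_i}}\}$ at $\sigma$, an element $A = (A_1,\ldots,A_m)$ of $\latt{Q_M}{\sigma} = \prod_i J(\latt{P_{S_i}}{\tau})$ produces a quiver $(Q_M)_A$ that is obtained from $\coprod_{i,j}(P_{S_i})_{T_{i,j}}$ (where $\max(A_i)=\{T_{i,j}\}_j$) by extending each $(P_{S_i})_{T_{i,j}}$ through the arrow $\alpha$ and gluing the resulting new vertices. The structure map $(Q_M)_A \to Q$ restricts over $P$ to the disjoint union $\coprod_{i,j}(P_{S_i})_{T_{i,j}} \to P$. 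By the definition of each $\pi_*^i$, the rank functor on $P$ induced by $(P_{S_i})_{T_{i,j}}\to P$ equals $\rank_{\pi_*^i(T_{i,j})}$, so by Lemma \ref{glueextranklemma} and Proposition \ref{rankspaceorderprop}(b), the rank functor on $Q$ induced by $(Q_M)_A$ equals $\rank_N$ where $N = \bigvee_{i,j}\pi_*^i(T_{i,j}) = \bigcup_i \pi_*^i(A_i)$ as an ideal of $\latt{P}{\tau}$. By Theorem \ref{rankequivthm} and Corollary \ref{indecompcor} (giving uniqueness of the reduced representative), this $N$ is exactly $\pi_*(A)$.

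For the pullback equation (\ref{extpulleqn}), I would verify that the candidate $\tilde{\pi}^*(N) := (\pi_1^*(N),\ldots,\pi_m^*(N))$ satisfies the defining adjoint property, then invoke uniqueness from Proposition \ref{adjunctionprop}. Each $\pi_i^*$ on ideals is the upper adjoint of $\pi_*^i$ between $J(\latt{P_{S_i}}{\tau})$ and $J(\latt{P}{\tau})$ (inherited from the element-level adjunction by functoriality). So for $A = (A_1,\ldots,A_m)$, one has $A \leq \tilde{\pi}^*(N)$ iff $A_i \leq \pi_i^*(N)$ for every $i$, iff $\pi_*^i(A_i) \leq N$ for every $i$, iff $\bigcup_i\pi_*^i(A_i) \leq N$, iff $\pi_*(A) \leq N$ by (\ref{extpusheqn}). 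Thus $\tilde{\pi}^* = \pi^*$.

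The gluing case runs along identical lines but is simpler. Writing $M = (X,Y)$, the quiver $(Q_M)_{(A,B)}$ is the gluing of $(P_X)_A$ and $(S_Y)_B$ at $\sigma$, whose structure map to $Q = P \cup^\sigma S$ restricts to $(P_X)_A \to P$ and $(S_Y)_B \to S$. These are rank-equivalent to $P_{\pi_*^P(A)}$ and $S_{\pi_*^S(B)}$ respectively, so the gluing is rank-equivalent to $Q_{(\pi_*^P(A),\pi_*^S(B))}$, yielding (\ref{gluepusheqn}). Then (\ref{gluepulleqn}) follows from uniqueness of the upper adjoint exactly as in the extension case, using the componentwise product structure $\latt{Q_M}{\sigma} = \latt{P_X}{\sigma} \times \latt{S_Y}{\sigma}$. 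The main obstacle will be bookkeeping: keeping straight which $\pi_*^i$ acts on elements of $\latt{P_{S_i}}{\tau}$ versus its induced action on $J(\latt{P_{S_i}}{\tau})$, and verifying the nested unwindings of the constructions of $(Q_M)_A$ and $Q_N$ match as claimed. Everything else is a direct application of Lemma \ref{glueextranklemma}, Proposition \ref{rankspaceorderprop}, and Theorem \ref{rankequivthm}.
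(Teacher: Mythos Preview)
Your proposal is correct and follows essentially the same route as the paper: verify the pushforward formulas by computing the induced rank functor via Lemma \ref{glueextranklemma} and Proposition \ref{rankspaceorderprop}(b), then deduce the pullback formulas from uniqueness of upper adjoints (Proposition \ref{adjunctionprop}). The only real difference is organizational: in the extension case the paper first treats principal $M=\gen{S}$ and principal $A=\gen{T}$ by a one-line rank computation, then bootstraps to general $A$ using that $\pi_*$ preserves joins (established in Proposition \ref{quivadjunctionprop}), and finally to general $M$ via the product decomposition; you instead unfold the construction of $(Q_M)_A$ all at once. Both work, and yours is arguably more direct at the cost of heavier bookkeeping, which you correctly flag.

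One small correction: your appeal to Corollary \ref{indecompcor} for uniqueness of the reduced representative is misplaced (that corollary concerns indecomposability of $\charrep_M$ and appears later). The uniqueness you need is already built into the definition of $\pi_*$ just before Proposition \ref{quivadjunctionprop}, and follows from Proposition \ref{covermapprop}: if $Q'$ is rank equivalent to both $Q_M$ and $Q_N$, the resulting maps $Q_M \to Q_N$ and $Q_N \to Q_M$ over $Q$ force $M=N$. Replace that citation and the argument goes through cleanly.
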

\begin{proof}
As usual, we proceed by induction, retaining the notation from the statement of the lemma.

\extcase
First consider the case that $M = \gen{S}$ is principal, so $Q_M$ is a one point extension of $(P_S, \tau)$, and $\latt{Q_M}{\sigma} = J(\latt{P_S}{\tau})$.
For any principal ideal $\gen{T} \subseteq \latt{P_S}{\tau}$, we know $\rank_{\gen{T}} V = V_\alpha (\rank_T V|_{P_S})$ from Lemma \ref{glueextranklemma}, so we can simply compute
\[
\rank_{\pi_* (\gen{T})}V = \rank_{\gen{T}}(c_M^* V) = V_\alpha (\rank_T (c_S^* (V|_P))) = V_\alpha (\rank_{\pi_*^1 (T)} (V|_P)) = \rank_{\gen{\pi_*^1 (T)}} V
\]
which shows that $\pi_* (\gen{T}) = \gen{\pi_*^1 (T)}$.  
We know that $\pi_*$ commutes with join and $\pi_*^P$ commutes with union of ideals, and these two operations correspond with one another in the identification $\latt{Q}{\sigma} = J(\latt{P}{\tau})$. Thus equation (\ref{extpusheqn}) holds for an arbitrary element of $\latt{Q_M}{\sigma}$ when $M$ is principal.

In general, suppose that $\max(M) = \{S_1, \dotsc, S_m \}$, so we have the identification
\[
\latt{Q_M}{\sigma} = \prod_{i=1}^m \latt{Q_{\gen{S_i}}}{\sigma} . 
\]
Under natural embeddings $\latt{Q_{\gen{S_i}}}{\sigma} \into \latt{Q_M}{\sigma}$ of (\ref{latticeproducteqn}), every element of $\latt{Q_M}{\sigma}$ can be written as a join of elements in the images of these, so again the correspondence between join and union shows that equation (\ref{extpusheqn}) holds in general.

\gluecase 
For any $(A, B) \in \latt{Q_M}{\sigma} = \latt{P_X}{\sigma} \times \latt{S_Y}{\sigma}$, we can compute
\[
\rank_{\pi_* (A, B)} = \rank_{(A, B)} \of c^* = \rank_A \of c^*|_P \cap  \rank_B \of c^*|_S
= \rank_{\pi^P_*(A)} \cap \rank_{\pi^S_*(B)} = \rank_{(\pi^P_* (A), \pi^S_* (B))}
\]
which shows that $(\pi^P_*(A), \pi^S_*(B)) = \pi_*(A, B)$.  

A routine argument using uniqueness of upper adjoints from Proposition \ref{adjunctionprop} gives the formulas for $\pi^*$.
\end{proof}

In light of this discussion, we will often omit notation indicating what base quiver an adjunction is over, letting the context make it clear.
We use this compatibility to inductively prove some combinatorial properties of such an adjunction.  Recall that a \textbf{coatom} of a finite lattice is an element immediately preceding $\hat{1}$, that is, an element that $\hat{1}$ covers.

\begin{lemma}\label{coatomlemma}
The maps $\pi_*$ and $\pi^*$ of Proposition \ref{quivadjunctionprop} have the following properties:
\begin{enumerate}[(a)]
\item $\pi_* (\hat{1}) = M$ and $\pi^* (M) = \hat{1}$,
\item $\pi^* (N) =\pi^*(M \wedge N)$ and $\pi_* \circ \pi^* (N) = M \wedge N$,
\item they restrict to inverse bijections between the sets of coatoms of $\latt{Q_M}{\sigma}$ and $\gen{M}$.
\end{enumerate}
\end{lemma}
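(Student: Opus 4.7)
Part (a) is immediate from the construction. The top element $\hat 1 \in \latt{Q_M}{\sigma}$ indexes the reduced quiver $(Q_M)_{\hat 1} = Q_M$ with identity structure map, so composing with $c_M$ recovers $c_M\colon Q_M \to Q$ itself, which is trivially rank equivalent to $Q_M$; hence $\pi_*(\hat 1) = M$. The equality $\pi^*(M) = \hat 1$ then reads off formula (\ref{rightadjointeqn}), since $\hat 1$ already belongs to $\{A : \pi_*(A) \leq M\}$.

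For (b), the identity $\pi^*(N) = \pi^*(M \wedge N)$ is a formal consequence of (a): because $\pi_*$ is order-preserving with top value $M$, every $\pi_*(A)$ satisfies $\pi_*(A) \leq M$, so the conditions $\pi_*(A) \leq N$ and $\pi_*(A) \leq M \wedge N$ cut out the same subset of $\latt{Q_M}{\sigma}$, and their maxima coincide. For $\pi_*\pi^*(N) = M \wedge N$, the inequality $\leq$ is immediate from the counit bound $\pi_*\pi^*(N) \leq N$ together with $\pi_*\pi^*(N) \leq \pi_*(\hat 1) = M$. The reverse inequality rests on the auxiliary claim that $\pi_*$ surjects onto $\gen{M}$, which I would deduce directly from Theorem \ref{rankequivthm} and Proposition \ref{covermapprop}: given any $N \leq M$ in $\latt{Q}{\sigma}$, Proposition \ref{covermapprop} supplies a (unique, root-preserving) morphism $Q_N \to Q_M$ over $Q$, presenting $Q_N$ as a rooted tree quiver over $Q_M$; Theorem \ref{rankequivthm} then produces $A \in \latt{Q_M}{\sigma}$ for which $Q_N$ is rank equivalent to $(Q_M)_A$ over $Q_M$, and composing with $c_M$ together with a short chase of rank equivalences yields $\pi_*(A) = N$. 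With surjectivity in hand, $M \wedge N = \pi_*(A)$ for some $A$; adjunction gives $A \leq \pi^*(N)$, and applying $\pi_*$ yields $M \wedge N \leq \pi_*\pi^*(N)$.

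For (c) I proceed by induction, making coatoms explicit on both sides. In the extension case with $\max(M) = \{S_1,\dots,S_m\}$, the coatoms of $\gen{M}$ are exactly the ideals $M \setminus \{S_i\}$, one per maximal element of $M$. On the other side, each finite lattice $\latt{P_{S_i}}{\tau}$ has its top $\hat 1$ as unique maximal element, so $J(\latt{P_{S_i}}{\tau})$ has the single coatom $\latt{P_{S_i}}{\tau}\setminus\{\hat 1\}$; thus the coatoms of $\latt{Q_M}{\sigma} = \prod_i J(\latt{P_{S_i}}{\tau})$ are precisely the tuples containing this coatom in one position $i$ and $\hat 1$ elsewhere. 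Applying (\ref{extpusheqn}) together with the inductive instance of (c) (which forces $\hat 1$ to be the only element of $\latt{P_{S_i}}{\tau}$ mapping to $S_i$, so the coatom ideal goes via $\pi_*^i$ to $\gen{S_i}\setminus\{S_i\}$), the $i$-th such coatom maps to $(\gen{S_i}\setminus\{S_i\}) \cup \bigcup_{j\neq i}\gen{S_j} = M\setminus\{S_i\}$, realizing the required bijection. That $\pi^*$ is its inverse follows from (b): restricted to $\gen{M}$, the map $\pi^*$ is a section of $\pi_*$. The gluing case is parallel, using (\ref{gluepusheqn}).

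The main obstacle is noticing that the surjectivity claim behind (b) can be extracted directly from Theorem \ref{rankequivthm} and Proposition \ref{covermapprop}, rather than needing its own induction; once this shortcut is in place, (c) reduces to the coatom bookkeeping sketched above against Lemma \ref{adjunctioncompatlemma}.
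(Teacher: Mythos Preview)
Your proof is correct. Parts (a) and (c) match the paper's treatment closely; part (c) in particular follows the same inductive bookkeeping against Lemma~\ref{adjunctioncompatlemma}, just spelled out in slightly more detail than the paper's terse ``the formulas of the previous lemma, along with the previous two items of this lemma, give the desired bijection.''

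The genuine difference is in the second identity of (b). The paper proves $\pi_*\pi^*(N) = M \wedge N$ by induction on the structure of $Q$, applying equations (\ref{extpusheqn})--(\ref{gluepulleqn}) from Lemma~\ref{adjunctioncompatlemma} directly to reduce to the inductive hypotheses $\pi_*^i\pi_i^*(T) = S_i \wedge T$ over $P$ (and similarly in the gluing case). You instead argue globally: the inequality $\pi_*\pi^*(N) \leq M \wedge N$ is formal, and for the reverse you establish that $\pi_*$ surjects onto $\gen{M}$ by feeding the morphism $Q_N \to Q_M$ from Proposition~\ref{covermapprop} into Theorem~\ref{rankequivthm} over $Q_M$, then chasing rank equivalences back down to $Q$ via Proposition~\ref{covermapprop} again. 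This works cleanly because the map $Q_N \to Q_M$ is automatically root-preserving (since $c_M^{-1}(\sigma) = \{\sigma_M\}$), so Theorem~\ref{rankequivthm} lands in $\latt{Q_M}{\sigma}$ rather than elsewhere in $L_{Q_M}$. Your route is more conceptual and avoids touching the explicit compatibility formulas for this step; the paper's route is more mechanical but keeps everything inside the inductive framework already in place. Either buys the result at comparable cost.

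One small point in (c): saying ``$\pi^*$ is a section of $\pi_*$'' from (b) gives $\pi_*\pi^* = \mathrm{id}$ on $\gen{M}$, but you still need a line to see that $\pi^*$ sends coatoms of $\gen{M}$ to coatoms of $\latt{Q_M}{\sigma}$ (rather than something strictly below). This follows since $\pi^*(D) < \hat 1$ forces $\pi^*(D) \leq C$ for some coatom $C$, whence $D \leq \pi_*(C)$; both being coatoms gives $D = \pi_*(C)$, and then $C \leq \pi^*(D)$ by adjunction. The paper's proof is equally terse here.
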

\begin{proof}
By definition of $\rank_M$, we have that $\pi_* (\hat{1}) = M$. Then the expression for an upper adjoint in equation (\ref{rightadjointeqn}) implies that $\pi^* (M) = \hat{1}$, and from the last statement of the same proposition we get 
\[
\pi^* (N) = \pi^* (N) \wedge \hat{1} = \pi^* (N) \wedge \pi^*(M) = \pi^*(M \wedge N) . 
\]
To see that $\pi_* \circ \pi^* (N) = M \wedge N$, we proceed by induction using Lemma \ref{adjunctioncompatlemma}, retaining the notation from the statement of the lemma.

\extcase In this case, the induction hypothesis implies that $\pi_*^i \circ \pi^*_i (T) = S_i \wedge T$ for $T \in \latt{P}{\tau}$, so the induced map on ideals sends $N\in \latt{Q}{\sigma}$ to $\gen{S_i} \wedge N$.  Then applying equations (\ref{extpusheqn}) and (\ref{extpulleqn}) gives
\[
\pi_* \circ \pi^* (N) = \bigcup_{i=1}^m \pi_*^i \circ \pi^*_i (N) = \bigcup_{i=1}^m \gen{S_i} \wedge N = M \wedge N .
\]

\gluecase By the induction hypothesis, $\pi_*^P \circ \pi^*_P (Z) = X \wedge Z$, and similarly over $S$.  So applying equations (\ref{gluepusheqn}) and (\ref{gluepulleqn}) to $N=(Z,W)$ gives
\begin{align*}
\pi_* \circ \pi^* (N) &= \pi_* \left( (\pi^*_P (Z), \pi^*_S (W) ) \right) = (\pi_*^P \circ \pi^*_P (Z), \pi_*^S \circ\pi^*_S (W) ) \\
&=   (X \wedge Z , Y \wedge W) = (X, Y) \wedge (Z, W) = M \wedge N .
\end{align*}

The third item is proven by induction.

\extcase Let $\max(M) = \{S_1, \dotsc, S_m \}$, so that the coatoms of $\gen{M}$ are the ideals
\[
I_j = M \setminus \{S_j \}
\]
obtained by removing one maximal element from $\gen{M}$.  The coatoms of $\prod_i J(\latt{P_{S_i}}{\tau_i})$ are of the form $(\hat{1}, \dotsc, \hat{1}, C_i, \hat{1}, \dotsc, \hat{1})$, where $C_i := \latt{P_{S_i}}{\tau_i} \setminus \{\hat{1}\}$ is the unique coatom of $J( \latt{P_{S_i}}{\tau_i})$. The formulas of the previous lemma, along with the previous two items of this lemma, give the desired bijection.

\gluecase  If $M = (X, Y)$, then $\gen{M} = \gen{X} \times \gen{Y}$, and the coatoms of $\gen{M}$ are of the form $(C, Y)$ and $(X, C)$ with $C$ a coatom of $\latt{P}{\sigma}$ or $\latt{S}{\sigma}$ as appropriate.
The coatoms of $\latt{Q_M}{\sigma}$ are of the form $(C', \hat{1})$ or $(\hat{1}, C')$, where $C'$ is a coatom of $\latt{P_X}{\sigma}$ or $\latt{P_Y}{\sigma}$.  If we assume that the lemma holds over $P$ and $S$, by induction, then the compatibility of $\pi^*$ and $\pi_*$ with gluing implies the lemma for $Q$.
\end{proof}

With these facts in hand, we can realize this combinatorial adjunction in a representation theoretic setting.  A technical lemma 
will clean up the proof of the theorem giving this realization; first we recall another notion from combinatorics which will be necessary for the lemma.
A \textbf{quasi-order} on a set $X$ is a binary relation $\preceq$ which is both
reflexive and transitive.  That is, $x \preceq x$ for every $x \in X$ and if $x \preceq y$ and $y \preceq z$, then $x \preceq z$.
A quasi-order for which $x \preceq y \preceq x$ implies that $x = y$ is precisely the definition of a partial-order.  So, defining an equivalence relation on a quasi-ordered set $X$ by
\[
x \sim y \iff x \preceq y \preceq x
\]
induces a natural partial-order the set of $\sim$-equivalence classes in $X$.

\begin{lemma}
Suppose that the rooted tree quiver $(Q, \sigma)$ is obtained from $(P, \tau)$ by extension along an arrow $\tau \xto{\alpha} \sigma$, and that $V \in \repq$ is a representation of $Q$ with $\dim_K V_\sigma = 1$.  Assume that the restriction of $V$ to $P$ decomposes as $V|_P \simeq \bigoplus_{i \in I} U_i \oplus \tilde{U}$ for some subrepresentations $U_i, \tilde{U} \subset V|_P$, with $\dim_K (U_i)_\tau = 1$ and $\dim_K \tilde{U}_\tau = 0$.  Furthermore, assume that $V_\alpha$ restricts to an isomorphism
\[
V_\alpha \colon \left(U_i \right)_\tau \xto{\sim} V_\sigma
\]
on each summand $U_i$.

The set $\{ U_i \}$ is quasi-ordered by the relation
\[
U_i \preceq U_j \iff \text{there exists }f \colon U_i \to U_j \text{ such that } f_\tau \neq 0
\]
which induces an equivalence relation $\sim$ on $\{ U_i \}$ as described above.  Let $J \subseteq I$ be such that $\{ U_j \}_{j \in J}$ contains exactly one element of each maximal equivalence class, with respect to the induced partial order.

Then $V$ has a direct sum decomposition $V \simeq X \oplus Y$, with
\[
X|_P \simeq \bigoplus_{j \in J} U_j \qquad \text{and} \qquad X_\sigma = V_\sigma
\]
(and hence $Y_\sigma = 0$).
\end{lemma}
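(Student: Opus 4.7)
The plan is to let $X$ collect the summands $U_j$ for $j\in J$ together with all of $V_\sigma$, and to form the complementary summand $Y$ by \emph{twisting} each remaining $U_i$ (for $i\notin J$) into the kernel of $V_\alpha$ inside $V_\tau$. The twist is made possible by the maximality hypothesis on $J$: every $i\notin J$ lies at or below a maximal equivalence class, so there is some $j(i)\in J$ with $U_i\preceq U_{j(i)}$, hence a morphism $f_i\colon U_i\to U_{j(i)}$ that is nonzero at $\tau$.

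First I would fix a generator $w\in V_\sigma$ and, for each $i\in I$, choose $v_i\in (U_i)_\tau$ with $V_\alpha(v_i)=w$; this is possible since $V_\alpha$ restricts to an isomorphism $(U_i)_\tau \xto{\sim} V_\sigma$. For $i\notin J$, pick $j(i)\in J$ and $f_i\colon U_i\to U_{j(i)}$ as above. Since $(U_i)_\tau$ and $(U_{j(i)})_\tau$ are both one-dimensional, $(f_i)_\tau$ is an isomorphism, and after rescaling $f_i$ we may assume $(f_i)_\tau(v_i)=v_{j(i)}$, so that $V_\alpha(v_i-(f_i)_\tau(v_i))=w-w=0$. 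Then the $P$-morphism
\[
g_i := \iota_i - \iota_{j(i)}\circ f_i \colon U_i \longrightarrow V|_P
\]
(where the $\iota$'s are the summand inclusions) is injective, because $\iota_i(U_i)$ and $\iota_{j(i)}(U_{j(i)})$ are distinct summands of $V|_P$ and hence intersect trivially. So $g_i(U_i)$ is a $P$-subrepresentation of $V|_P$ isomorphic to $U_i$, whose one-dimensional $\tau$-component $K(v_i-v_{j(i)})$ is annihilated by $V_\alpha$.

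Now I would define candidate $Q$-subrepresentations of $V$ by
\[
X_x := \bigoplus_{j\in J} (U_j)_x, \qquad Y_x := \tilde U_x \oplus \bigoplus_{i\notin J} g_i(U_i)_x \qquad (x\in P\verts),
\]
and $X_\sigma := V_\sigma$, $Y_\sigma := 0$. Verifying that each is a $Q$-subrepresentation reduces to a direct check on the extending arrow $\alpha$: $V_\alpha(X_\tau)\subseteq V_\sigma = X_\sigma$ is clear since each $(U_j)_\tau$ maps onto $V_\sigma$, and $V_\alpha(Y_\tau)=0$ follows from $\tilde U_\tau=0$ together with the computation above.

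The principal and final step is to verify $V=X\oplus Y$. At $\sigma$ this is immediate. At any $x\in P\verts$ I would show that the map
\[
\bigoplus_{j\in J} U_j \;\oplus\; \tilde U \;\oplus\; \bigoplus_{i\notin J} g_i(U_i) \;\longrightarrow\; V|_P
\]
induced by the various inclusions is an isomorphism of $P$-representations. Injectivity at each vertex follows by expanding out each $f_i$-term back into the original summands $U_{j(i)}$ and reading off coefficients in the $(U_i)_x$-components for $i\notin J$ first, which forces those coefficients to vanish; the remaining coefficients in $(U_j)_x$ and $\tilde U_x$ then vanish as well. Surjectivity follows from a dimension count against the given decomposition of $V|_P$. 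I expect the main difficulty to be the bookkeeping of indices in this last step and the normalization of the $f_i$ relative to the fixed $v_i$, rather than any deeper conceptual obstruction.
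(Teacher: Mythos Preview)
Your proposal is correct and follows essentially the same approach as the paper: both choose, for each $i\notin J$, a morphism $f_i\colon U_i\to U_{j(i)}$ normalized so that $v_i\mapsto v_{j(i)}$ at $\tau$, and use it to replace $U_i$ by a twisted copy whose $\tau$-component lies in $\ker V_\alpha$. The paper packages this by defining an endomorphism $\varphi$ of $\bigoplus_i U_i$ built from the $f_i$ and taking $Y|_P$ to be its kernel together with $\tilde U$, whereas you write down the twisted summands $g_i(U_i)=\{u-f_i(u)\}$ directly; your explicit verification of $V|_P=X|_P\oplus Y|_P$ via the ``read off the $U_i$-coefficient first'' argument is exactly the linear-algebra content underlying the paper's appeal to Krull--Schmidt.
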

\begin{proof}
Fix a basis $\{ u_i \}$ of $V_\tau$ such that each $u_i \in U_i$, and $V_\alpha (u_i) = V_\alpha (u_j)$ for all $i, j$ (for instance, fix a nonzero $v_\sigma \in V_\sigma$ and set each $u_i$ to be the preimage of $v_\sigma$ in $U_i$).
Then for each $i \notin J$, there exists some (not necessarily unique) $j(i) \in J$ and $f_i \in \Hom_P (U_i, U_{j(i)}) $ with $(f_i)_\tau \neq 0$. 
Multiplying by a scalar, if necessary, we can assume $f_i (u_i) = u_{j(i)}$.  Now define
\[
\varphi := id - \sum_{i \notin J} f_i \in \End_P \left( \bigoplus_{i \in I} U_i \right)
\]
so that $f_i (U_j) = 0$ for all $i \notin J$ and $j \in J$.  Then $\varphi$ restricts to the identity on the subrepresentation $\bigoplus_{j \in J} U_j \subset V |_P$, hence $\varphi$ splits the inclusion of these summands.  Furthermore, $\ker \varphi \simeq \bigoplus_{i \notin J} U_i \in \rep (P)$ by the Krull-Schmidt theorem.  Since
$(\ker \varphi)_\tau$ is generated by 
\[
\setst{u_i - u_{j(i)}}{i \notin J} \subset \ker V_\alpha , 
\]
the subrepresentation $\ker \varphi$ extends by 0 to a representation $Y$ of $Q$ satisfying the conclusion of the theorem.
\end{proof}

\begin{theorem}\label{charreppullbackthm}
Let $M \in \latt{Q}{\sigma}$ and define $c:=c_M$. Let $(\pi_*, \pi^*)$ be the corresponding adjunction of Proposition \ref{quivadjunctionprop}.  Then for any $N \in \latt{Q}{\sigma}$, we have
\[
c^* \charrep_N \simeq \charrep_{\pi^* N} \oplus U
\]
where $U$ is a direct sum of reduced representations of $Q_M$ such that $\sigma \notin \supp U$.  Similarly, for $A \in \latt{Q_M}{\sigma}$ we have
\[
c_*\charrep_A \simeq \charrep_{\pi_* A} \oplus U'
\]
with $U'$ a direct sum of reduced representations of $Q$ and $\sigma \notin \supp U$.
\end{theorem}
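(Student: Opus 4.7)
\emph{Plan.} Both parts reduce, by the identity $c_*\charrep_A = (c \circ c_A)_*\id_{(Q_M)_A}$ and a base-change isomorphism $c^*\charrep_N \cong p_{M*}\id_F$ (where $F = Q_M \times_Q Q_N$ and $p_M$ is the projection to $Q_M$; this can be checked directly on stalks and arrow-maps), to a single claim: for any root-preserving $f:Q'\to Q$ from a connected rooted tree $Q'$ with $|f^{-1}(\sigma)|=1$, the pushforward $f_*\id_{Q'}$ decomposes as $\charrep_M \oplus W$, where $M\in \latt{Q}{\sigma}$ is the rank-equivalence index supplied by Theorem~\ref{rankequivthm} and $W$ is a direct sum of reduced representations of $Q$ with $\sigma \notin \supp W$. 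For the pullback application, decompose $F$ into connected components: the component $F_0$ containing $(\sigma_{Q_M},\sigma_{Q_N})$ is rank equivalent to $(Q_M)_{\pi^*N}$ (via the fiber-product universal property combined with the adjunction of Proposition~\ref{quivadjunctionprop}), contributing $\charrep_{\pi^*N}$; the remaining components have sinks mapping to non-$\sigma$ vertices of $Q_M$, and the analogous claim applied at those vertices handles them.

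\emph{Proving the claim.} I would induct on $|Q\verts|$ following the extension/gluing dichotomy of Section~\ref{rootedtreesect}, with the preceding lemma as the key tool in the extension case. Suppose $Q$ is an extension of $P$ along $\alpha:\tau\to\sigma$. Then $V:=f_*\id_{Q'}$ has $\dim V_\sigma=1$, and $V|_P$ is the pushforward of $\id$ along $f|_{f^{-1}(P)}: f^{-1}(P)\to P$, a map on a disjoint union of rooted subtrees of $Q'$. The inductive hypothesis applied to each component decomposes $V|_P$ into reduced representations of $P$; group those rooted at $\tau$ (necessarily indexed by $\latt{P}{\tau}$, with $1$-dimensional stalk at $\tau$) as the $U_i$ of the preceding lemma, absorbing into $\tilde U$ any $U_i$ on which $V_\alpha$ vanishes. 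The preceding lemma then yields $V = X\oplus Y$ with $X_\sigma = V_\sigma$, and Theorem~\ref{charrephomthm} together with Proposition~\ref{covermapprop} match its quasi-order to the partial order on $\latt{P}{\tau}$, identifying $X$ with $\charrep_M$. The complement $Y$ has $Y_\sigma=0$, extends by zero from a representation on $P$, and is already a direct sum of reduced representations. The gluing case is handled by restricting to the subquivers $P$ and $S$ separately and reassembling via Lemma~\ref{adjunctioncompatlemma}.

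\emph{Main obstacle.} The chief difficulty is the identification $X \cong \charrep_M$, which requires an exact alignment between the algebraic equivalence classes defined in the preceding lemma (via nonzero maps at $\tau$) and the coatom structure of $\latt{P}{\tau}$ encoding $\max(M)$, and which ultimately rests on the surjection of Theorem~\ref{charrephomthm}. A related subtlety is that some reduced summands of $V|_P$ rooted at $\tau$ may have $V_\alpha=0$ (coming from vertices of $Q'$ mapping to $\tau$ whose unique outgoing arrow does not lie in $f^{-1}(\alpha)$) and must be regrouped into $\tilde U$ before the lemma can be applied. A third point requiring care is verifying $F_0 \cong (Q_M)_{\pi^*N}$ in the pullback reduction; this should follow from the fiber-product universal property together with Lemma~\ref{coatomlemma}, but the verification is not purely formal.
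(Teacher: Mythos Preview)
Your approach differs from the paper's in a structurally interesting way: you reduce both statements to a single claim about pushforwards $f_*\id_{Q'}$ for arbitrary connected rooted trees $Q'$ over $Q$, then handle the pullback via a base-change identity $c^*\charrep_N \cong p_{M*}\id_F$ with $F = Q_M\times_Q Q_N$. The paper instead runs separate (but parallel) inductions for $c^*$ and $c_*$, never invoking fiber products; the identification of the main summand as $\charrep_{\pi^* N}$ is done directly through Lemma~\ref{adjunctioncompatlemma} at each inductive step. Your single-claim induction and the paper's extension-case argument are essentially the same use of the preceding lemma together with Theorem~\ref{charrephomthm}.

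Two minor corrections to your plan. First, in the extension case with $Q'$ connected and $f$ root-preserving, every component of $f^{-1}(P)$ has its sink mapping to $\tau$ (the unique arrow out of that sink in $Q'$ must map to $\alpha$), and $V_\alpha$ is nonzero on the $\tau$-space of each resulting summand; your worry about absorbing summands with $V_\alpha = 0$ does not arise. Second, the fiber product $F$ is already connected: every vertex $(x,y)\neq (\sigma_{Q_M},\sigma_{Q_N})$ has a unique outgoing arrow in $F$, so following arrows always leads to the unique sink. There are no ``other components'' to handle.

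The genuine gap is the one you flag: identifying the rank-equivalence class of $F$ over $Q_M$ with $\pi^* N$. Your named ingredients do suffice, but the argument needs to be written out. If $F$ is rank equivalent to $(Q_M)_A$, then composing $(Q_M)_A\to F \xrightarrow{p_N} Q_N$ over $Q$ and applying Proposition~\ref{covermapprop} gives $\pi_*A \leq N$, hence $A\leq \pi^*N$ by adjunction. Conversely, Lemma~\ref{coatomlemma} gives $\pi_*\pi^*N = M\wedge N \leq N$, so there is a map $(Q_M)_{\pi^*N}\to Q_N$ over $Q$; together with the structure map $(Q_M)_{\pi^*N}\to Q_M$, the universal property of the fiber product yields a map $(Q_M)_{\pi^*N}\to F$ over $Q_M$, whence $\pi^*N \leq A$. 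With this filled in your argument goes through. The paper's direct induction via Lemma~\ref{adjunctioncompatlemma} sidesteps this verification entirely, at the cost of treating $c^*$ and $c_*$ separately.
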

\begin{proof}
The proof is by induction on the number of vertices of $Q$.

\extcase First consider the case that $M = \gen{S}$ is principal, so $Q_M$ is a one point extension of $(P_S, \tau)$, and there is an adjunction
\[
\xymatrix{ \latt{P_S}{\tau} \ar@<.5ex>[r]^{\pi_*}  & \ar@<.5ex>[l]^{\pi^*} \latt{P}{\tau} } . 
\]
Let $\max(N) = \{T_1, \dotsc, T_n \} \subset \latt{P}{\tau}$, so by the induction hypothesis we get
\[
c^* \charrep_N |_{P_S} = c_S^* \left( \bigoplus_{i=1}^n \charrep_{T_i} \right) \simeq \bigoplus_{i=1}^n \charrep_{\pi^*(T_i)} \oplus \tilde{U}
\]
with $\tilde{U}$ a direct sum of reduced representations of $P_S$, and $\tau \notin \supp \tilde{U}$ by dimension count.  Since $(c^* \charrep_N)_\alpha = (\charrep_N)_\alpha$ restricts to an isomorphism on each summand $\charrep_{\pi^*(T_i)}$, the representation $c^* \charrep_N$ with the decomposition above satisfies the hypothesis of the previous lemma.  By Theorem \ref{charrephomthm}, a maximal subset of $\{ \charrep_{\pi^* (T_i)} \}$ (with respect to the ordering in the previous lemma) is given by taking those representations indexed by the set of maximal elements of $\{ \pi^* (T_i) \}$.  Then the lemma gives a direct sum decomposition
\[
c^* \charrep_N \simeq W \oplus U
\]
with $U$ a direct sum of reduced representations, $U_\sigma = 0$, and $W |_P \simeq \bigoplus \charrep_{\pi^*(T_i)}$, the sum taken over maximal elements of $\{ \pi^* (T_1), \dotsc , \pi^* (T_n) \}$.  By construction of the reduced representations, $W \simeq \charrep_{\tilde{N}}$, where $\tilde{N} := \gen{\pi^*(T_1), \dotsc, \pi^*(T_n)}$.  The compatibility of $\pi^*$ with extension from Lemma \ref{adjunctioncompatlemma} then gives that $\tilde{N} = \pi^*(N)$.

For a general $M$, say with $\max(M) = \{S_1, \dotsc, S_m \}$, we have adjunctions
\[
\xymatrix{ \latt{P_{S_i}}{\tau_i} \ar@<.5ex>[r]^{\pi_*^i}  & \ar@<.5ex>[l]^{\pi^*_i} \latt{P}{\tau} }
\]
and by the case above we know that $c^* \charrep_N|_{Q_{\gen{S_i}}} \simeq \charrep_{\pi^*_i (N)} \oplus U_i$, with $U_i$ a direct sum of reduced representations.  But, since $Q = \coprod_i^\sigma Q_{\gen{S_i}}$, taking $U := \bigoplus U_i$ we get
\[
c^* \charrep_N \simeq \charrep_{(\pi^*_1 (N), \dotsc, \pi^*_m (N))} \oplus U = \charrep_{\pi^* (N)} \oplus U
\]
where the last equality follows from the compatibility of $\pi^*$ with gluing.

The proof for pushforward is similar, and $\sigma$ is not in the support of $U$ by dimension count.

\gluecase Write $M= (X, Y)$ and $N = (Z, W)$ in $\latt{Q}{\sigma} = \latt{P}{\sigma} \times \latt{S}{\sigma}$.  Over $P$, we have an adjunction
\[
\xymatrix{ \latt{P_{X}}{\sigma} \ar@<.5ex>[r]^{\pi_*} & \ar@<.5ex>[l]^{\pi^*} \latt{P}{\sigma} } . 
\]
By the induction hypothesis, we have 
\[
c^* \charrep_N |_{P_X} = c_X^* \charrep_Z \simeq \charrep_{\pi^* (Z)} \oplus U_P \qquad \text{and} \qquad c^* \charrep_N |_{S_Y} = c_Y^* \charrep_W \simeq \charrep_{\pi^* (W)} \oplus U_S . 
\]
Since $\sigma \notin \supp U_P \cup \supp U_S$ by dimension reasons, if we set $U := U_P \oplus U_S$ we get that
\[
c^* \charrep_N \simeq \charrep_{(\pi^*(Z), \pi^*(W))} \oplus U = \charrep_{\pi^*(N)} \oplus U . 
\]
The proof for pushforward is similar, and $\sigma \notin \supp U'$ by dimension reasons again.
\end{proof}

The following lemma, valid for any quiver $Q$ (not just rooted trees), gives an essential connection between the tensor product in $\repq$ and quivers over $Q$.

\begin{lemma}\label{pullpushtensorlemma}
Let $Q$ be any quiver, $c\colon Q' \to Q$ any quiver over $Q$, and $V \in \repq$.   Then there is an isomorphism
\[
\left( c_* \id_{Q'} \right) \otimes V \cong c_* c^* V . 
\]
\end{lemma}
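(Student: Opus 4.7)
The plan is to construct the isomorphism vertex by vertex using a standard basis of $c_*\id_{Q'}$, and then verify compatibility with the maps attached to arrows. This is essentially a bookkeeping exercise in unpacking the three pointwise definitions (pushforward, pullback, tensor product), so I expect no serious obstacle, just the need to track indices carefully.

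First I would fix a standard basis $\{v_y\}_{y \in Q'\verts}$ of $c_*\id_{Q'}$, as introduced before Theorem \ref{charrephomthm}, so that at each $x \in Q\verts$ we have
\[
(c_* \id_{Q'})_x = \bigoplus_{y \in c^{-1}(x)} K v_y,
\]
and for each arrow $a \in Q\arrows$ the map $(c_* \id_{Q'})_a$ sends $v_{tb}$ to $v_{hb}$ for every $b \in c^{-1}(a)$ (and is zero on basis vectors at vertices not of the form $tb$). Tensoring pointwise with $V$ gives
\[
\bigl((c_* \id_{Q'}) \otimes V\bigr)_x \;=\; \bigoplus_{y \in c^{-1}(x)} K v_y \otimes V_x,
\]
with the arrow map sending $v_{tb} \otimes w$ to $v_{hb} \otimes V_a(w)$.

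Next I would write out the right-hand side using the definitions of $c^*$ and $c_*$:
\[
(c_* c^* V)_x \;=\; \bigoplus_{y \in c^{-1}(x)} (c^*V)_y \;=\; \bigoplus_{y \in c^{-1}(x)} V_{c(y)} \;=\; \bigoplus_{y \in c^{-1}(x)} V_x,
\]
where the last equality holds because $c(y) = x$ for each $y$ in the indexing set. On an arrow $a \in Q\arrows$, the map $(c_* c^* V)_a = \sum_{b \in c^{-1}(a)} V_{c(b)} = \sum_{b \in c^{-1}(a)} V_a$ sends an element $w$ sitting in the $(tb)$-summand to $V_a(w)$ in the $(hb)$-summand.

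Finally, I would define the candidate isomorphism $\phi \colon (c_*\id_{Q'}) \otimes V \to c_* c^* V$ at each vertex $x$ by $v_y \otimes w \mapsto w$, viewed as an element of the $y$-summand of $(c_* c^* V)_x$. This is a linear isomorphism at each vertex by inspection, and the two descriptions of the arrow maps above show immediately that $\phi_{ha} \circ ((c_*\id_{Q'}) \otimes V)_a = (c_* c^* V)_a \circ \phi_{ta}$, so $\phi$ is a morphism in $\repq$. Naturality in $V$ is then automatic, since $\phi$ was defined without any choice depending on $V$.
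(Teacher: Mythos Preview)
Your proof is correct and is essentially the same as the paper's: both unpack the definitions of pushforward, pullback, and tensor product vertex by vertex, observe the evident identification $\bigoplus_{y \in c^{-1}(x)} Kv_y \otimes V_x \cong \bigoplus_{y \in c^{-1}(x)} V_x$, and then check that the arrow maps agree under this identification. The only difference is cosmetic---you phrase things in terms of a standard basis while the paper writes the chain of equalities more compactly.
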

\begin{proof}
For each $y \in Q\verts$ we have an isomorphism of vector spaces
\[
\left(c_* \id_{Q'} \otimes V\right)_y = \left(\bigoplus_{x \in c^{-1}y} \id_x \right) \otimes V_y \cong \bigoplus_{x \in c^{-1}y} V_y = \bigoplus_{x \in c^{-1}y} (c^* V)_x = \left(c_* c^* V\right)_y
\]
such that for each arrow $a \in Q\arrows$, the maps over $a$ are identified under this isomorphism:
\[
\left(c_* \id_{Q'} \otimes V \right)_a =  \sum_{b \in c^{-1}a} \id_b \otimes V_a = \sum_{b \in c^{-1}a} \id_b \otimes (c^* V)_b = \sum_{b \in c^{-1}a} (c^* V)_b = (c_* c^* V)_a . \qedhere
\]
\end{proof}

This lemma allows us to compute the tensor product of reduced representations as a corollary of the theorem.

\begin{corollary}\label{charreptensorcor}
For $M, N \in \latt{Q}{x}$, there is an isomorphism
\[
\charrep_M \otimes \charrep_N \simeq \charrep_{M \wedge N} \oplus U
\]
where $U$ is a direct sum of reduced representations without $x$ in its support.
\end{corollary}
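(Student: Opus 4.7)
The plan is to combine the projection formula of Lemma \ref{pullpushtensorlemma} with a double application of Theorem \ref{charreppullbackthm}, using Lemma \ref{coatomlemma}(b) to identify $\pi_*\pi^* N = M \wedge N$. First I would reduce to the case $x = \sigma$: since $\latt{Q}{x} = \latt{Q_{\to x}}{x}$ and each $\charrep_M$ has support contained in $Q_{\to x}$, the tensor product $\charrep_M \otimes \charrep_N$ is computed entirely inside $Q_{\to x}$, so without loss of generality $x$ is the root.

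Next I would rewrite the tensor product via Lemma \ref{pullpushtensorlemma}:
\[
\charrep_M \otimes \charrep_N = (c_{M*}\id_{Q_M}) \otimes \charrep_N \cong c_{M*}c_M^*\charrep_N.
\]
Apply Theorem \ref{charreppullbackthm} to the pullback to get $c_M^*\charrep_N \cong \charrep_{\pi^*N} \oplus U$, where $U$ is a direct sum of reduced representations of $Q_M$ with $\sigma \notin \supp U$. Pushing forward gives $\charrep_M \otimes \charrep_N \cong c_{M*}\charrep_{\pi^*N} \oplus c_{M*}U$. Applying the pushforward half of Theorem \ref{charreppullbackthm} together with Lemma \ref{coatomlemma}(b) yields
\[
c_{M*}\charrep_{\pi^*N} \cong \charrep_{\pi_*\pi^*N} \oplus U_1 = \charrep_{M \wedge N} \oplus U_1,
\]
with $U_1$ a direct sum of reduced representations of $Q$ avoiding $\sigma$ in their support. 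This identifies the promised $\charrep_{M\wedge N}$ summand.

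For the remaining term $c_{M*}U$, the fact that $c_M$ is root preserving gives $c_M^{-1}(\sigma) = \{\sigma_M\}$, so $(c_{M*}U)_\sigma = U_{\sigma_M} = 0$ and hence $\sigma \notin \supp c_{M*}U$. It then remains to check that $c_{M*}U$ decomposes into reduced representations of $Q$. This is the main obstacle: each summand of $U$ is $\charrep_A$ for some $A \in \latt{Q_M}{y}$ with $y \neq \sigma_M$, and Theorem \ref{charreppullbackthm} is stated only for elements of the lattice at the root. To circumvent this, I would either (a) run the whole proof by induction on $|Q\verts|$, handling the extension and gluing cases in parallel with the inductive proofs of Theorems \ref{charreppullbackthm} and \ref{rankequivthm}; or (b) observe that the restricted map $(Q_M)_{\to y} \to Q_{\to c_M(y)}$ is (by the recursive construction of reduced quivers) itself rank equivalent to a reduced quiver over $Q_{\to c_M(y)}$, then apply the pushforward half of Theorem \ref{charreppullbackthm} inside the proper sub-rooted-tree $Q_{\to c_M(y)} \subsetneq Q$.

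Combining these pieces produces $\charrep_M \otimes \charrep_N \cong \charrep_{M \wedge N} \oplus (U_1 \oplus c_{M*}U)$, with the bracketed piece a sum of reduced representations of $Q$ whose supports miss $\sigma$. The whole argument is really a two-step adjunction computation plus Krull--Schmidt bookkeeping, and the expected difficulty lies not in any clever computation but in verifying that the leftover summand $c_{M*}U$ stays inside the class of reduced representations of $Q$, which is precisely what forces either an additional inductive layer or an appeal to Theorem \ref{rankequivthm} to certify reducedness of the intermediate quivers.
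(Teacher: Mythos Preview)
Your proposal is correct and follows essentially the same route as the paper: write $\charrep_M \otimes \charrep_N \cong c_{M*}c_M^*\charrep_N$ via Lemma~\ref{pullpushtensorlemma}, apply both halves of Theorem~\ref{charreppullbackthm}, and invoke Lemma~\ref{coatomlemma}(b) to get $\pi_*\pi^*N = M \wedge N$. The paper dispatches the point you flag as the main obstacle---that $c_{M*}U$ is again a sum of reduced representations---with a one-line appeal back to Theorem~\ref{charreppullbackthm} (implicitly applied on the smaller rooted subtrees, i.e.\ your option~(b)), and handles $x \notin \supp U$ by the dimension count that the other three terms in the isomorphism all have dimension one at $x$; your argument via $c_M^{-1}(\sigma) = \{\sigma_M\}$ is equivalent.
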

\begin{proof}
Let $c:= c_M$, so that by Theorem \ref{charreppullbackthm} and Lemma \ref{pullpushtensorlemma}, we have
\[
\charrep_M \otimes \charrep_N \simeq c_* c^* (\charrep_N) \simeq c_* (\charrep_{\pi^*N} \oplus U') \simeq \charrep_{\pi_* (\pi^* (N))} \oplus c_*U' = \charrep_{M \wedge N} \oplus U
\]
with the last equality following from Lemma \ref{coatomlemma}. That $U$ is a direct sum of reduced representations also follows from Theorem \ref{charreppullbackthm}, and $x$ cannot be in the support of $U$ because the other three representations appearing in the formula have dimension one at $x$.
\end{proof}

For $M \in \latt{Q}{x}$, define the corresponding \textbf{rank function} on $V \in \repq$ by
\[
r_M (V) := \dim_K (\rank_M V) \in \Z_{\geq 0} . 
\]

\begin{corollary}\label{zetacor}
Evaluation of rank functions on reduced representations is given by the zeta function of $\latt{Q}{x}$.  That is, for $M, N \in \latt{Q}{x}$, we have
\[
r_M (\charrep_N) = \zeta (M, N) :=
\begin{cases}
1 & M \leq N \\
0 & \text{otherwise} \\
\end{cases} . 
\]
\end{corollary}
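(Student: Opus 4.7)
The plan is to reduce to the case $x = \sigma$ (since by definition $\latt{Q}{x} = \latt{Q^{\to x}}{x}$, and all reduced quivers in $\latt{Q}{x}$ factor through $Q^{\to x}$, rank computations at $x$ are intrinsic to this subquiver), and then compute $\rank_M \charrep_N = \rank_{Q_M}(c_M^* \charrep_N)$ directly, using the pullback decomposition already in hand.

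By Theorem~\ref{charreppullbackthm} applied to $\charrep_N$, we have
\[
c_M^* \charrep_N \simeq \charrep_{\pi^* N} \oplus U,
\]
where $U$ is a direct sum of reduced representations of $Q_M$ whose supports avoid $\sigma$. Since rank functors are additive, and since $\rank_{Q_M} U \subseteq U_\sigma = 0$ by Lemma~\ref{rootedtreeranklemma} applied to $Q_M$, the contribution of $U$ vanishes. Thus $\rank_M \charrep_N = \rank_{Q_M}(\charrep_{\pi^* N})$.

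Now $\charrep_{\pi^* N}$ is indecomposable in $\rep(Q_M)$ by Corollary~\ref{indecompcor}, and $\id_{Q_M}$ is itself the reduced representation corresponding to $\hat{1}_{\latt{Q_M}{\sigma}}$ (the reduced quiver $(Q_M)_{\hat{1}}$ is $Q_M$ with identity structure map). Therefore Theorem~\ref{kinserrankthm}(c), applied to the rooted tree quiver $Q_M$, gives
\[
\rank_{Q_M}(\charrep_{\pi^* N}) =
\begin{cases} K & \text{if } \pi^* N = \hat{1}, \\ 0 & \text{otherwise.} \end{cases}
\]
It remains to identify the condition $\pi^* N = \hat{1}$. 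By Lemma~\ref{coatomlemma}(b), $\pi_* \pi^* N = M \wedge N$, and by (a) $\pi_* \hat{1} = M$; so $\pi^* N = \hat{1}$ forces $M \wedge N = M$, i.e.\ $M \leq N$. Conversely, if $M \leq N$ then $M \wedge N = M$ and Lemma~\ref{coatomlemma}(b) combined with (a) gives $\pi^* N = \pi^*(M \wedge N) = \pi^*(M) = \hat{1}$. Taking $K$-dimensions then yields $r_M(\charrep_N) = \zeta(M,N)$ as claimed.

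The only real friction in this argument is bookkeeping: one must confirm that $\id_{Q_M}$ really does correspond to $\hat{1}$ under the indexing of reduced representations, and keep straight which of the two sinks (of $Q$ versus of $Q_M$) is being denoted $\sigma$ when invoking Theorem~\ref{charreppullbackthm} and extracting the stalk; neither is substantive, since both follow from the inductive construction in \S\ref{reducedquiversect} and the conventions set there.
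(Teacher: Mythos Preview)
Your proof is correct and follows essentially the same route as the paper's: both apply Theorem~\ref{charreppullbackthm} to decompose $c_M^*\charrep_N$, invoke Theorem~\ref{kinserrankthm}(c) to detect the $\id_{Q_M}$ summand, and use Lemma~\ref{coatomlemma} to translate $\pi^*N=\hat{1}$ into $M\leq N$. The paper's version is terser---it phrases things as a count of $\id_{Q_M}$ summands and bounds that count by $\dim_K(\charrep_N)_\sigma=1$---whereas you make the vanishing of $U$ and the indecomposability of $\charrep_{\pi^*N}$ explicit, but the substance is the same.
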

\begin{proof}
From Theorem \ref{kinserrankthm}, $r_M (\charrep_N)$ counts the number of direct summands of $c_M^*\charrep_N$ which are isomorphic to $\id_{Q_M}$.  By Theorem \ref{charreppullbackthm}, this number is nonzero if and only if $\pi^*(N) = \hat{1}$, which is if and only if $M \leq N$ by Lemma \ref{coatomlemma}.  This number can be at most 1 since $\dim_K (\charrep_N)_\sigma = 1$.
\end{proof}

This corollary strengthens part (a) of Proposition \ref{rankspaceorderprop} to: ``$N \geq M$ in $\latt{Q}{x}$ if and only if there is an inclusion of functors $\rank_N \subseteq \rank_M$''.

\begin{remark}
With a little more work, the previous corollary can be extended to show that the relation over $\latt{Q}{}$ given by
\[
M \leq N \iff r_M (\charrep_N) \neq 0
\]
for all $M, N \in \latt{Q}{}$ is a partial ordering.  This partial order on all of $\latt{Q}{}$ can also be obtained in a purely combinatorial way, without reference to rank functors or representations, by ``patching together'' the lattices $\latt{Q}{x}$. 
More precisely, we start with the partial order on $\latt{Q}{}$ inherited from its definition as the disjoint union of all $\latt{Q}{x}$, and add certain relations for each arrow of $Q$ as follows.  Let $a$ be an arrow of $Q$.  By construction, we have an inclusion
\[
J(\latt{Q}{ta}) \subseteq \latt{Q}{ha} \subseteq \latt{Q}{} .
\]
Then for every $S \in \latt{Q}{ta}$, we add the relation $S \leq \gen{S}$ in $\latt{Q}{}$, and refine the inherited partial order on $\latt{Q}{}$ to include these additional relations.  Then the cover relations in this partial ordering of $L_Q$ correspond to more morphisms between rank functors and between reduced representations.  However, $L_Q$ is not a lattice if $Q$ has more than one vertex, and most of our correspondences between combinatorial properties of $L_Q$ and representation theoretic statements do not generalize as neatly as Corollary \ref{zetacor}.  It is much easier to work with the individual lattices $\latt{Q}{x}$, and this ordering on the entire set $L_Q$ will not be needed in any proofs in this paper; hence we omit formal proofs of the statements in this remark.
\end{remark}

\section{The Representation Ring}
\label{repringsect}
We now have enough tools to start an analysis of the representation ring of a rooted tree quiver.  The \textbf{representation ring} $R(Q)$ of a quiver $Q$ (cf. \cite[\S3]{kinserrank}) is defined as the free abelian group on the isomorphism classes of representations of $Q$, modulo the subgroup generated by elements $[V] + [W] - [V \oplus W]$.  By the Krull-Schmidt theorem \cite[Thm.~I.4.10]{assemetal}, $R(Q)$ is freely generated by indecomposable representations.  Multiplication is given by $[V][W] = [V \otimes W]$, so $R(Q)$ is commutative with identity $[\id_Q]$.

\subsection{The Support Algebra of a Quiver}
\label{supportsect}
In this subsection, $Q$ can be any quiver, possibly disconnected or with oriented cycles and parallel arrows.  The set $\subquivs$ of connected subquivers of $Q$ is partially ordered by inclusion.  We will show that  the M\"obius algebra of $\subquivs$ over $\Z$ is naturally a subalgebra of the representation ring of $Q$, decomposing $R(Q)$ into a product of rings.  This generalizes some results of \cite[\S 3]{herschend07b}, while at the same time putting them in a natural combinatorial setting.

The \textbf{M\"obius algebra} (over $\Z$) of a poset $P$ (cf. \cite{greenemobiusalgebra}), written $A(P, \Z)$, is defined as the free $\Z$-module on the elements of $P$, with multiplication of two of these basis elements $x, y \in P$ given by
\begin{equation}\label{mobiusmulteqn}
x \cdot y := \sum_{s \in P} \left[ \sum_{s \leq t \leq x, y} \mu (s, t) \right] s
\end{equation}
where $\mu$ is the M\"obius function of $P$.  In case the meet of $x$ and $y$ exists, this simplifies to $x\cdot y = x \wedge y$, so if $P$ is a meet semi-lattice then $A(P, \Z)$ is just $\Z[L; \wedge]$, the semi-group ring of $L$ with respect to the meet operator.  The multiplication defined in (\ref{mobiusmulteqn}) is precisely the structure that gives a $\Z$-basis for $A(P, \Z)$ of orthogonal idempotents
\[
\delta_x := \sum_{y \leq x} \mu(y, x) y \qquad x \in P
\]
via M\"obius inversion.  The original basis elements can be recovered as $y = \sum_{x \leq y} \delta_x$.

The poset $\subquivs$ is a meet semi-lattice if and only if $Q$ is a tree.
When $Q$ is not a tree, we can still view the multiplication in $A:=A(\subquivs, \Z)$ in a more intuitive way than one might expect from the expression in (\ref{mobiusmulteqn}).
Given $P, S \in \subquivs$, let $\{M_i\}$ be the set of maximal connected subquivers (i.e., the connected components) of $P \cap S$.  If $U$ is connected and contained in both $P$ and $S$, then $U$ is contained in some \emph{unique} $M_i$.  Furthermore, any connected $T$ containing $U$ but also contained in both $P$ and $S$ will itself also be contained in $M_i$.  In other words, for each $U \leq P, S$ there exists a unique $i$ such that
\[
\setst{T \in \subquivs}{U \leq T \leq P, S} = \setst{T \in \subquivs}{U \leq T \leq M_i} . 
\]
So the bracketed sum in equation (\ref{mobiusmulteqn}) can be computed for $P, S \in \subquivs$ to be
\[
\sum_{U \leq T \leq P, S} \mu (U, T) = \sum_{U \leq T \leq M_i} \mu (U, T) = \begin{cases}
1 & U = M_i \\
0 & U \neq M_i
\end{cases}
\]
using the standard property of the M\"obius function (cf. \cite[\S~3.7]{stanleyenumcombin}).  Hence, the product of two connected subquivers in $A$ is the sum of the connected components of their intersection $P \cap S$:
\[
P \cdot S = \sum_i M_i . 
\]

There is an injective map $\phi \colon \subquivs \to R(Q)$ given by $\phi(P) = \id_P$.  Since each $P \in \subquivs$ is connected, $\id_P$ is indecomposable, so the image of $\phi$ is a set of $\Z$-linearly independent elements of $R(Q)$.  Hence $\phi$ uniquely extends to map of $\Z$-modules $\tilde{\phi} \colon A \into R(Q)$.
From the definition of quiver tensor product, it is easy to see that
\[
\id_P \otimes \id_S = \bigoplus_i \id_{M_i}
\]
where again $\{ M_i \}$ is the set of connected components of $P \cap S$.  This shows that $\tilde{\phi}$ is a ring homomorphism, so we can regard $A$ as a subalgebra of $R(Q)$ by identifying a connected subquiver of $Q$ with the identity representation of that subquiver.

\begin{definition}
We call $A(\subquivs, \Z)$ (or its natural image in $R(Q)$) the \textbf{support algebra} of $Q$.
\end{definition}

In particular, we can define for each $P \in \subquivs$ an element
\[
e_P := \sum_{P' \leq P} \mu(P', P) \id_{P'}
\]
of the support algebra such that $\id_P = \sum_{P' \leq P} e_{P'}$, and $\{e_P\}_{P \in \subquivs}$ is a set of orthogonal idempotents in $R(Q)$.  From this discussion, and the fact that orthogonal idempotents give a direct product decomposition of a ring, the following proposition is immediate.
\begin{prop}
For any quiver $Q$, the support algebra gives a decomposition of $R(Q)$ into a product of rings
\[
R(Q) \cong \prod_{P \in \subquivs} e_P R(Q)
\]
called the \textbf{decomposition of $R(Q)$ by supports}.
\end{prop}

The next proposition gives a first entry in the dictionary between $R(Q)$ and $\repq$.

\begin{prop}\label{eppropertiesprop}
For a connected subquiver $Q' \subseteq Q$, the following hold:
\begin{enumerate}[(a)]
\item For any $V \in \repq$, we have that $Q' \nsubseteq \supp V$ implies $e_{Q'} V = 0$. 
\item The set of images $\{e_{Q'} V \}$ freely generates $e_{Q'} R(Q)$ as a $\Z$-module, where $V$ ranges over isomorphism classes of indecomposable representations with support exactly $Q'$.  In particular, if $V$ is indecomposable and $\supp V = Q'$, then $e_{Q'} V \neq 0$.
\end{enumerate}
\end{prop}

\begin{proof}
\begin{enumerate}[(a)]

\item Let $P:=\supp V$, so we have that $V = \id_P V = \sum_{P' \leq P} e_{P'} V$.  Then
\[
e_{Q'}V = e_{Q'} \id_P V =
\begin{cases}
e_{Q'} V 	& Q' \subseteq P \\
0		& Q' \nsubseteq P
\end{cases}
\]
by orthogonality.

\item This part reduces to the case $Q'=Q$ by induction on the number of vertices of $Q$.

Since $R(Q)$ is generated as a $\Z$-module by indecomposable representations, the images of the indecomposables generate the factor ring $e_Q R(Q)$.  But if $\supp V \nsupseteq Q$ (i.e. $\supp V \neq Q$ since $Q$ is the maximal subquiver of itself), then $e_Q V = 0$ by (a), so in fact $e_Q R(Q)$ is generated by the images of indecomposables with support exactly $Q$.

Now suppose there is a relation
\[
\sum_i n_i e_Q V_i = 0
\]
where $\{V_i \}$ are pairwise non-isomorphic indecomposables with $\supp V_i =Q$ and $n_i \in \Z$.  
Then substituting the expression
\[
e_Q = \id_Q + \sum_{P < Q} \mu(P,Q) \id_P
\]
and using that $\id_Q V_i = V_i$ for all $i$, we would get
\[
\sum_i n_i V_i = - \sum_i \sum_{P < Q} n_i \mu(P, Q) \id_P V_i . 
\]
Every term $\id_P V_i$ on the right hand side has support smaller than $Q$, and the terms on the left hand side are indecomposable with support $Q$.  Since indecomposables freely generate $R(Q)$ by definition, it must be that each $n_i = 0$.  \qedhere
\end{enumerate}
\end{proof}

It should be noted again that idea of giving an orthogonal decomposition of $R(Q)$ in order to simplify inductive proofs is due to Herschend, and that some parts of the above propositions appear in the work cited above, under additional assumptions (e.g. $Q$ is a tree, $Q$ is Dynkin).

\subsection{A finer notion of support}
\label{fsuppsect}
Now suppose that $(Q, \sigma)$ is a rooted tree quiver, and let $R:=R(Q)$ be the representation ring of $Q$.  We will use the reduced representations $\charrep_M$ and lattices $\latt{Q}{x}$ to further decompose each factor $e_P R$.  The main result of the paper will be that, after this, no further decomposition is possible.  More precisely, we will show that $L_Q$ indexes a complete set of orthogonal idempotents in $R$.

In the remainder of the paper, we will simplify some of the constructions and proofs by ignoring direct summands of representations which don't have $\sigma$ in their support, assuming that we know about these summands by some induction.  Working in an appropriate factor ring of $R$ is the technical tool that allows us to do this rigorously.  Writing $\subquivs_\sigma := \setst{P \subseteq Q}{\sigma \in P\verts}$ for the collection of all connected subquivers of $Q$ with $\sigma$ in their support, we define
\[
R_\sigma = \prod_{P \in \subquivs_\sigma} e_P R . 
\]
Then $R_\sigma $ is naturally both an ideal in $R$, and a factor ring of $R$ with identity $\sum_{P \in \subquivs_\sigma} e_P$.  For $r \in R$, we denote by 
\[
\overline{r} := \left( \sum_{P \in \subquivs_\sigma} e_P \right) r 
\]
the image of $r$ in $R_\sigma$.  By Proposition \ref{eppropertiesprop}, $R_\sigma$ is freely generated as a $\Z$-module group by the images of all indecomposable representations of $Q$ with $\sigma$ in their support.
The following lemma justifies why induction reduces the study of $R$ to that of $R_\sigma$, and  interprets passage to $R_\sigma$ in terms of the representation theory of $Q$.

\begin{lemma}
Let $X \subset Q\verts$ be the set of vertices $x$ of $Q$ for which there exists an arrow from $x$ to $\sigma$.  Then
\[
R \cong R_\sigma \times \prod_{x \in X} R(Q\tox) 
\]
and for $V \in \repq$, we have $\overline{V} =0$ in $R_\sigma$ if and only if $\sigma \notin \supp V$.
\end{lemma}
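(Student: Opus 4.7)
The plan is to leverage the decomposition of $R$ by supports (Proposition \ref{eppropertiesprop}) to separate out the factors whose indexing subquivers contain $\sigma$ from those that don't, and then identify the latter factors with representation rings of the smaller trees $Q\tox$.

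First I would partition $\subquivs$ into $\subquivs_\sigma$ and its complement, which by connectedness coincides with $\coprod_{x \in X} \subquivs(Q\tox)$ since $Q \setminus \{\sigma\} = \coprod_{x \in X} Q\tox$. This splits $R$ as $R_\sigma \times \epsilon' R$, where $\epsilon' := \sum_{P \notin \subquivs_\sigma} e_P$. A direct computation using $\id_{Q\tox} = \sum_{P \leq Q\tox} e_P$ and the disjointness of the $Q\tox$'s for distinct $x \in X$ identifies $\epsilon' = \sum_{x \in X} \id_{Q\tox}$, and shows that the $\id_{Q\tox}$ are mutually orthogonal idempotents (their pairwise tensor products are zero at every vertex, since the supports are disjoint). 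Hence $\epsilon' R = \prod_{x \in X} \id_{Q\tox} R$ as a product of rings.

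Next I would identify each factor $\id_{Q\tox} R$ with $R(Q\tox)$ via extension by zero. Given $V \in \repq$, the element $\id_{Q\tox}[V] = [\id_{Q\tox} \otimes V]$ is simply the class of $V|_{Q\tox}$ extended by $0$ outside $Q\tox$, so extension-by-zero and restriction are mutually inverse bijections between indecomposables of $Q\tox$ and indecomposables of $Q$ supported in $Q\tox$, matching the free $\Z$-module generators on each side. Extension by zero is multiplicative (tensor product is computed pointwise and vanishes outside $Q\tox$) and sends $\id_{Q\tox}$ to the identity $\id_{Q\tox}$ of the factor $\id_{Q\tox} R$, so this is a ring isomorphism. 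Combining with Step 1 gives the desired product decomposition.

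For the second claim, the idempotent description yields $\overline{V} = \epsilon \cdot [V] = [V] - \epsilon'[V] = [V] - [\id_{Q \setminus \sigma} \otimes V] = [V] - [V|_{Q \setminus \sigma}]$, where I write $V|_{Q \setminus \sigma}$ for $V$ with the vector space at $\sigma$ replaced by $0$. Since $R$ is freely generated as a $\Z$-module by indecomposables, $\overline{V} = 0$ iff $V \cong V|_{Q \setminus \sigma}$, which happens iff $V_\sigma = 0$, i.e.\ $\sigma \notin \supp V$. The only subtle step is Step 2 identifying $\id_{Q\tox} R$ with $R(Q\tox)$: one must check that extension by zero really does restrict to a bijection on isomorphism classes of indecomposables (and not merely on representations), but this follows because $V|_{Q\tox}$ is indecomposable in $\rep(Q\tox)$ whenever $V$ is indecomposable in $\repq$ with $\supp V \subseteq Q\tox$, as any nontrivial decomposition over $Q\tox$ extends trivially to one over $Q$.
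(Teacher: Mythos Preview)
Your proof is correct and follows essentially the same route as the paper: partition the connected subquivers according to whether they contain $\sigma$, observe that those not containing $\sigma$ lie in a unique $Q\tox$, and read off the product decomposition from the support algebra. The paper's proof is a two-line sketch that leaves the identification $\id_{Q\tox} R \cong R(Q\tox)$ implicit and deduces the second claim directly from Proposition~\ref{eppropertiesprop} (via the free generation of $R_\sigma$ by indecomposables with $\sigma$ in their support); your version spells out the extension-by-zero isomorphism and computes $\overline{V} = [V] - [V|_{Q\setminus\sigma}]$ explicitly, but the underlying argument is the same.
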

\begin{proof}
The first statement holds because if $\sigma \notin \supp P$ for some connected $P \subset Q$, then $P \subseteq Q\tox$ for a unique $x$.  The second statement is a corollary of Proposition \ref{eppropertiesprop}.
\end{proof}

Now consider the M\"obius algebra of $\latt{Q}{\sigma}$ over $\Z$, which we will denote by
\[
A_\sigma := A(\latt{Q}{\sigma}, \Z) . 
\]
Since $\latt{Q}{\sigma}$ is a lattice, this is the semigroup algebra of $\latt{Q}{\sigma}$ with respect to the meet operator $\wedge$.
The map $\psi \colon \latt{Q}{\sigma} \to R_\sigma$ given by $\psi(M) = \overline{\charrep_M}$ extends by linearity to a map of $\Z$-modules
\[
\tilde{\psi} \colon A_\sigma \into R_\sigma . 
\]
Proposition \ref{eppropertiesprop} implies that this map is injective, since the reduced representations $\charrep_M$ for $M \in \latt{Q}{\sigma}$ are indecomposable and pairwise non-isomorphic by Corollary \ref{indecompcor}.
Using Corollary \ref{charreptensorcor}, in $R_\sigma$ we have
\[
\overline{\charrep_M} \overline{\charrep_N} = \overline{\charrep_{M \wedge N}}
\]
so in fact $\psi$ is a ring homomorphism.  Thus $A_\sigma$ is a subalgebra of $R_\sigma$, and as before we get orthogonal idempotents
\[
f_M := \sum_{M' \leq M} \mu(M', M) \overline{\charrep_{M'}} . 
\]
 in $R_\sigma$ which give a direct product decomposition
\[
R_\sigma \cong \prod_{M \in \latt{Q}{\sigma}} f_M R_\sigma . 
\]
In particular, note that
\begin{equation}\label{charreptimesfeqn}
f_M \charrep_N = f_M \overline{\charrep_N} = f_M\, \zeta(M, N)
\end{equation}
where $\zeta$ is again the zeta function of the poset $\latt{Q}{\sigma}$, as in Corollary \ref{zetacor}.  This simply follows from substituting the expression $\overline{\charrep_N} = \sum_{N' \leq N} f_{N'}$.

In Propostion \ref{eppropertiesprop}, we related the images of a representation in the factor rings $e_P R$ to a basic representation theoretic property, namely the support of a representation.  Our goal now is to add another entry to the dictionary between $R(Q)$ and $\repq$ by doing something analogous for the factor rings $f_M R_\sigma$.  

\begin{definition}
For $V \in \repq$ such that $\overline{V} \neq 0$ in $R_\sigma$, let
\[
\fsuppset_V = \setst{ M \in \latt{Q}{\sigma}}{ \overline{\charrep_M V} = \overline{V}} . 
\]
We define the \textbf{fine support} of $V$ to be
\[
\fsupp V := \min \fsuppset_V \in \latt{Q}{\sigma} . 
\]
\end{definition}

The set has a unique minimal element because $\fsuppset_V$ is a meet semi-lattice of $\latt{Q}{\sigma}$.  That is, if both $\overline{\charrep_M V }= \overline{V}$ and $\overline{\charrep_N V} = \overline{ V}$, then 
\[
\overline{\charrep_{M \wedge N} V}  = \overline{\charrep_M \charrep_N V} = \overline{ V}.
\]
Furthermore, every $M \geq \fsupp V =:N$ has the property that $\overline{\charrep_M V} = \overline{V}$, since
\[
\overline{\charrep_M V} = \overline{\charrep_M \left(\charrep_{N} V \right) } = \overline{\left(\charrep_M \charrep_N \right) V} = \overline{\charrep_{M \wedge N} V} = \overline{\charrep_{N} V} = \overline{V} . 
\]
In other words, $\fsuppset_V$ is always a principal filter (or dual order ideal) in $\latt{Q}{\sigma}$, and by definition $\fsupp V$ is its generator.  
Now suppose that $\overline{W} \neq 0$ for every indecomposable summand $W$ of $V$.  Then by considering the dimension at $\sigma$, an alternative characterization of fine support is that $N \geq \fsupp V$ if and only if $\charrep_N \otimes V \simeq V \oplus U$ for some $U \in \repq$.
The next proposition gives the basic properties of fine support. Note that the first four properties are 
analogous to properties of the support of a representation.

\begin{prop}  In the statements below, assume that every representation appearing has nonzero image in $R_\sigma$, so the fine support is defined.  Then the following properties hold: 
\begin{enumerate}[({F}1)]
\item For a direct sum decomposition $V \simeq \bigoplus_{i=1}^n V_i$, we get
\[
\fsupp V = \bigvee_{i=1}^n \fsupp V_i .
\]

\item Tensor product can only decrease the fine support of a representation.  More precisely, we have
\[
\fsupp (V \otimes W) \leq \fsupp V \wedge \fsupp W .
\]

\item If $\fsupp V \ngeq M$, then $f_M V = 0$.

\item The ring $f_M R$ is freely generated as a $\Z$-module by
\[
\setst{f_M V}{V \text{ indecomposable with } \fsupp V = M}.
\]
In particular, if $V$ is indecomposable and $\fsupp V = M$, then $f_M V \neq 0$.

\item Whenever $\rank_M V \neq 0$,  we have that $M \leq \fsupp V$.

\item Suppose $V$ is indecomposable and $\fsupp V = N$. Then $\rank_N V \neq 0$ implies $V \simeq \charrep_N$.
\end{enumerate}
\end{prop}

\begin{proof}  Let $N := \fsupp V$ throughout the proof.

\begin{enumerate}[({F}1)]
\item Let $M :=\bigvee_{i=1}^n \fsupp V_i$, so we want so show $M=N$.  Since $M \geq \fsupp V_i$ for each $i$, we know that $\overline{\charrep_M V_i} = \overline{V_i}$ for each $i$.  Thus we can compute
\[
\overline{\charrep_M V} = \overline{\charrep_M \sum_i V_i} = \sum_i \overline{\charrep_M V_i} = \sum_i \overline{V_i} = \overline{V}
\]
and so  $N \leq M$.

For the reverse inequality, it is enough to show that $N \geq \fsupp V_i$ for each $i$, and without loss of generality we can take each $V_i$ to be indecomposable.
  Let $\{W_{ij} \}$ be the set of indecomposable summands of $\charrep_N \otimes V_i$ that have $\sigma$ in their support, so $\overline{ \charrep_N V_i} = \sum_{j=1}^{d(i)} \overline{W_{ij}}$.  Then by assumption, we get an equality
\[
\sum_i \overline{V_i} = \overline{V} = \overline{\charrep_N V} = \overline{\charrep_N} \sum_i \overline{V_i} = \sum_i \overline{\charrep_N V_i} = \sum_{ij} \overline{W_{ij}} . 
\]
Since the $V_i$ and $W_{ij}$ are indecomposable, Proposition \ref{eppropertiesprop} implies that each $d(i) = 1$ and there is a permutation $\pi \in \symgp{n}$ such that $\overline{ \charrep_N V_i} = \overline{ V_{\pi i}}$ for all $i$.  But since $\overline{\charrep_N}$ idempotent in $R_\sigma$, the associated permutation $\pi$ is also, and so $\pi$ is the identity permutation.  Thus $\overline{ \charrep_N V_i} = \overline{ V_{i}}$ for all $i$, which implies that $N \geq \fsupp V_i$ for all $i$, and finally that $N \geq M$.

\item If we write $M:= \fsupp W$, then we can use Corollary \ref{charreptensorcor} to compute
\[
\overline{\charrep_{N \wedge M}}  (\overline{V \otimes W}) = \overline{\charrep_N \charrep_M VW} = \overline{\charrep_N V}\ \overline{\charrep_M W} = \overline{V}\overline{W} . 
\]
So by definition we get that $\fsupp(V \otimes W) \leq N \wedge W$.

\item If $N \ngeq M$, then equation (\ref{charreptimesfeqn}) implies that $f_M V = f_M (\overline{\charrep_N V}) = 0$.

\item The ring $R_\sigma$ is generated as a $\Z$-module by the images of indecomposable representations with $\sigma$ in their support, and $f_M R$ is a factor ring of $R_\sigma$, so the image of this set generates $f_M R$ also.
For $V$ indecomposable with $\sigma \in \supp V$, we can write
\[
\overline{\charrep_M V} = \sum_{i \in I} \overline{V_i} \neq 0
\]
where each $V_i$ is indecomposable and has $\sigma$ in its support.  Then for each $i$, it must be that $\fsupp V_i \leq M$, since properties (F1) and (F2) give that 
\[
\fsupp V_i \leq \fsupp \left( \bigoplus_i V_i \right) = \fsupp \left( \charrep_M \otimes V \right) \leq M \wedge \fsupp V \leq M .
\]
Now (F3) implies that $f_M V_i = 0$ when $\fsupp V_i$ is strictly less than $M$, so we can use equation (\ref{charreptimesfeqn}) to get
\[
f_M V = (f_M \charrep_M) V = f_M (\charrep_M V) = f_M \sum_{i \in J} V_i
\]
where $J := \setst{i \in I}{\fsupp V_i = M}$.  
Hence $f_M R$ is generated by images of indecomposables with fine support exactly $M$.

Suppose we had a relation of the form
\[
\sum_i n_i f_M V_i = 0 \qquad n_i \in \Z
\]
where each $V_i \in \repq$ has fine support exactly $M$, and the $V_i$ are pairwise non-isomorphic. 
Then substituting the expression 
\[
f_M = \overline{\charrep_M} + \sum_{M' < M} \mu(M', M) \overline{\charrep_{M'}}
\]
into the previous equation, we use the assumption that $\overline{\charrep_M V_i} = \overline{V_i}$ for each $i$ to get
\[
\sum_i n_i \overline{V_i} + \sum_i \sum_{M' < M} n_i \mu(M', M) \overline{\charrep_{M'} V_i} = 0 . 
\]
Now the double sum lies in the span of images of indecomposables with fine support strictly less than $M$, by property (F2), and the first sum consists of images of indecomposables with fine support exactly $M$.  Since the images of indecomposables with $\sigma$ in their support freely generate $R_\sigma$, we get that $n_i =0$ for all $i$.  This shows that $f_M R$ is \emph{freely} generated by the images of the indecomposables with fine support $M$.

\item Recalling that we defined  $N=\fsupp V$, we have that $\overline{\charrep_N V} = \overline{V}$ holds by definition, and so there exist $X, Y \in \repq$ with $\sigma$ not in either of their supports
and such that
\[
X \oplus \left( \charrep_M \otimes V \right) \simeq V \oplus Y . 
\]
Since $\sigma$ is not in the support of $X$, it must be that $\rank_M X \subseteq X_\sigma = 0$, and similarly $\rank_M Y = 0$.  Hence we can compute
\begin{multline*}
(\rank_M \charrep_N) \otimes_K (\rank_M V) = \rank_M (\charrep_N \otimes V) = \rank_M (X \oplus (\charrep_N \otimes V) ) \\ = \rank_M (V \oplus Y) = \rank_M V \neq 0
\end{multline*}
which implies that $\rank_M \charrep_N \neq 0$.  Then by Corollary \ref{zetacor} we get that $M \leq N = \fsupp V$.

\item Let $c := c_N$.  If $\rank_N V = \rank_{Q_N} (c^* V) \neq 0$,
then by Theorem \ref{kinserrankthm} there is a decomposition $c^* V \simeq \id_{Q_N} \oplus U$ for some $U \in \rep (Q_N)$. Pushing back down to $Q$ we get
\[
c_{*} c^* V \simeq c_* \id_{Q_N} \oplus c_* U = \charrep_N \oplus c_* U . 
\]
By Lemma \ref{pullpushtensorlemma}, the left hand side is isomorphic to $\charrep_N \otimes V$.  If $V$ is indecomposable and $\fsupp V = N$, then $V$ itself is the only indecomposable direct summand of $\charrep_N \otimes V$ with $\sigma$ in its support, by considering dimension at the vertex $\sigma$.  By comparison with the right hand side, it must be that $V \simeq \charrep_N$. \qedhere
\end{enumerate}
\end{proof}


The rank spaces and fine support of a representation $V$ give information about morphisms between reduced representations and $V$.

\begin{lemma}\label{rankhomlemma}
For any $M \in \latt{Q}{\sigma}$, the vectors in $\rank_M V \subseteq V_\sigma$ are precisely the vectors contained in the image at $\sigma$ of some morphism from $\charrep_M$ to $V$.  That is, there is a natural map of vector spaces
\[
\Psi\colon \Hom_Q (\charrep_M, V) \onto \rank_M V
\]
such that if we fix a nonzero vector $v_\sigma \in (\charrep_M)_\sigma$, then we have
\[
\Psi (f) = f(v_\sigma)
\]
for $f \colon \charrep_M \to V$.
\end{lemma}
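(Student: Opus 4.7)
The plan is to route the statement through the adjunction $(c_{M*}, c_M^*)$ of Proposition \ref{adjointprop}. Since $\charrep_M = c_{M*}\id_{Q_M}$, it gives a natural bijection
\[
\Hom_Q(\charrep_M, V) \xto{\sim} \Hom_{Q_M}(\id_{Q_M}, c_M^* V),
\]
and a morphism $\phi\colon \id_{Q_M} \to c_M^* V$ is recorded by the tuple $w_y := \phi_y(1) \in V_{c_M(y)}$, which by naturality of $\phi$ satisfies $V_{c_M(a)}(w_{ta}) = w_{ha}$ for every arrow $a$ of $Q_M$. I define $\Psi$ as evaluation at the sink, $\phi \mapsto w_{\sigma_M} \in V_\sigma$; tracing the adjunction formula in the proof of Proposition \ref{adjointprop} shows that the corresponding $\tilde{\phi} \in \Hom_Q(\charrep_M, V)$ satisfies $\tilde{\phi}_\sigma(v_\sigma) = w_{\sigma_M}$ after fixing $v_\sigma := 1_{\sigma_M}$ as a basis vector for the one-dimensional space $(\charrep_M)_\sigma$.

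I would then show $\im \Psi \subseteq \rank_M V$ using Lemma \ref{rootedtreeranklemma}, which rewrites $\rank_M V = (\surjrep_{Q_M}(c_M^* V))_{\sigma_M}$. For any $\phi$, the image $\im \phi \subseteq c_M^* V$ is a quotient of the epimorphic representation $\id_{Q_M}$ and hence itself epimorphic; by the universal property of $\surjrep_{Q_M}$ it sits inside $\surjrep_{Q_M}(c_M^* V)$, placing $\Psi(\phi) = w_{\sigma_M}$ inside $\rank_M V$.

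For surjectivity I would construct a lift. Given $v \in \rank_M V$, set $W := \surjrep_{Q_M}(c_M^* V)$, so $v \in W_{\sigma_M}$. Since $Q_M$ is a rooted tree, every non-sink vertex has a unique outgoing arrow, so I can define $w_y \in W_y$ by induction on distance from $\sigma_M$: put $w_{\sigma_M} := v$, and at each step, for the vertex $y$ with unique outgoing arrow $a\colon y \to y'$ and $w_{y'}$ already chosen, use that $W_a$ is surjective to pick $w_y \in W_y$ with $W_a(w_y) = w_{y'}$. The resulting compatible tuple defines a morphism $\phi\colon \id_{Q_M} \to W \subseteq c_M^* V$ with $\Psi(\phi) = v$.

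The main point requiring care is that the inductive lift is well-defined, and this uses the rooted tree structure essentially: each non-sink vertex receives exactly one downstream constraint, so the successive preimage choices never conflict. On a more general quiver, a vertex could have several outgoing arrows forcing simultaneous lifts that need not be compatible, which is why this argument mirrors the structural role of rooted trees in Lemma \ref{rootedtreeranklemma}.
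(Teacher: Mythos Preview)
Your proof is correct and follows essentially the same route as the paper: use the adjunction $(c_{M*}, c_M^*)$ to identify $\Hom_Q(\charrep_M, V)$ with $\Hom_{Q_M}(\id_{Q_M}, c_M^* V)$, then show that evaluation at the sink surjects onto $(\surjrep_{Q_M}(c_M^* V))_{\sigma_M}$, which equals $\rank_M V$ by Lemma~\ref{rootedtreeranklemma}. The only difference is that the paper outsources the middle step to \cite[Prop.~28,~29]{kinserrank}, whereas you supply the argument directly: the containment via the observation that $\im\phi$ is epimorphic, and surjectivity via the inductive lift along the unique outgoing arrows of the rooted tree $Q_M$. Your self-contained version is exactly what one would expect those cited propositions to unwind to.
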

\begin{proof}
Let $c:= c_M$.  Then $(c_{*}, c^*)$ is an adjoint pair by Proposition \ref{adjointprop}, so there is an isomorphism
\[
\Hom_Q (\charrep_M, V) = \Hom_Q (c_* \id , V) \cong \Hom_{Q_M} (\id, c^* V) . 
\]
From \cite[Prop.~28,~29]{kinserrank} there is a surjective linear map
\[
\Hom_{Q_M} (\id, c^* V) \onto \surjrep_{Q_M} (c^*V)_\sigma
\]
sending a morphism $f$ to the vector $f(v_\sigma)$.  Using Lemma \ref{rootedtreeranklemma}, the right hand side is equal to $\rank_M V$.
\end{proof}

\begin{lemma}\label{homtocharreplemma}
If $V \in \repq$ is such that $\fsupp V \leq M$, then any linear functional on the vector space $V_\sigma$ lifts to a morphism of representations from $V$ to $\charrep_M$.  More precisely, if we fix an isomorphism $(\charrep_M)_\sigma \simeq K$, then there is a natural map of vector spaces
\[
\Phi \colon \Hom_K( V_\sigma, K) \into \Hom_Q (V, \charrep_M)
\]
such that for $g\colon V_\sigma \to K$,
\[
\Phi(g)_\sigma = g . 
\]
\end{lemma}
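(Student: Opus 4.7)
The plan is to use the adjunction of Proposition \ref{adjointprop} to factor a morphism $V \to \charrep_M$ through $c_{*} c^{*} V$, where I write $c := c_M$. By Lemma \ref{pullpushtensorlemma}, $c_{*} c^{*} V \cong \charrep_M \otimes V$, and the hypothesis $\fsupp V \leq M$ (combined with property (F4) and Krull--Schmidt) forces a decomposition $c_{*} c^{*} V \cong V \oplus U$ in which every indecomposable summand of $U$ has $\sigma \notin \supp$. After first peeling off the summands of $V$ itself without $\sigma$ in their support --- on which both $g$ and $\Phi(g)$ are automatically zero --- this produces a split monomorphism $s \colon V \to c_{*} c^{*} V$; and since $U_\sigma = 0$, the component $s_\sigma \colon V_\sigma \to (c_{*} c^{*} V)_\sigma = V_\sigma$ is automatically a linear isomorphism (though not, in general, the identity).

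Given $g \colon V_\sigma \to K$, I would construct a morphism $\psi_g \colon c^{*} V \to \id_{Q_M}$ of representations of $Q_M$ by the formula
\[
(\psi_g)_y := g \circ s_\sigma^{-1} \circ V_{p_y},
\]
where $p_y$ denotes the unique path in $Q$ from $c(y)$ to $\sigma$ (obtained by applying $c$ to the unique path from $y$ to $\sigma_M$ in the rooted tree $Q_M$). The compatibility required for $\psi_g$ to be a morphism reduces, over each arrow $b$ of $Q_M$, to the identity $(\psi_g)_{tb} = (\psi_g)_{hb} \circ V_{c(b)}$, which holds because $p_{tb} = c(b) \cdot p_{hb}$ by the rooted-tree structure of $Q_M$. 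Setting $\Phi(g) := c_{*}(\psi_g) \circ s$, one computes at the vertex $\sigma$ that $(c_{*} \psi_g)_\sigma = (\psi_g)_{\sigma_M} = g \circ s_\sigma^{-1}$, and hence $\Phi(g)_\sigma = (g \circ s_\sigma^{-1}) \circ s_\sigma = g$. Linearity of $\Phi$ in $g$ is immediate once $s$ has been fixed, and the injectivity asserted in the statement follows from $\Phi(g)_\sigma = g$.

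The main obstacle is producing the section $s$ --- the rest of the construction is a direct consequence of the adjunction and the rooted-tree structure of $Q_M$, since in $Q_M$ every non-sink vertex has a unique outgoing arrow and the target $\id_{Q_M}$ has identity maps, making the compatibility equations for $\psi_g$ collapse to a single propagation rule. The existence of $s$ requires the Krull--Schmidt theorem together with the characterization of fine support in (F4), which combine to pin down the $\sigma$-supported summands of $\charrep_M \otimes V$ exactly when $\fsupp V \leq M$.
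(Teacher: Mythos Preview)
Your argument is correct and follows essentially the same route as the paper: build a morphism $c^*V \to \id_{Q_M}$ from the linear functional (you do this by hand via the paths $p_y$, the paper quotes the fact that $\id_{Q_M}$ is the injective at $\sigma$), push it forward along $c_*$, and then use the splitting $c_*c^*V \simeq V \oplus (\text{stuff with }\sigma\notin\supp)$ coming from $\fsupp V \le M$ to land in $\Hom_Q(V,\charrep_M)$. Two minor remarks: the splitting you need is not property (F4) but rather the alternative characterization of fine support stated just after its definition (equivalently, Proposition~\ref{eppropertiesprop}(b) applied in $R_\sigma$ together with Krull--Schmidt); and your detour through $s_\sigma^{-1}$ is avoidable, since one may choose the section $s$ with $s_\sigma = \mathrm{id}$ (as $(c_*c^*V)_\sigma = V_\sigma$ canonically and the complement $U$ vanishes at $\sigma$). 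Conversely, your version sidesteps the paper's appeal to Theorem~\ref{charreppullbackthm} (the decomposition of $c^*\charrep_M$) by using $c_*\id_{Q_M}=\charrep_M$ directly, which is a small simplification.
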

\begin{proof}
Let  $c := c_M$.  Then $c^* \charrep_M \simeq \id_{Q_M} \oplus U$ with $\sigma \notin \supp U$ by Theorem \ref{charreppullbackthm}.  There is a natural isomorphism of vector spaces
\[
\Hom_K ((c^*V)_\sigma, K) \cong \Hom_{Q_M} (c^* V, \id_{Q_M})
\]
coming from the fact that $\id_{Q_M}$ is the injective representation of $Q_M$  associated to the vertex $\sigma$, (cf. \cite[Lem.~III.2.11]{assemetal}).
Now since $V_\sigma = (c^*V)_\sigma$, we use the decomposition of $c^* \charrep_M$ above to get an embedding
\[
\Hom_K (V_\sigma, K) \into \Hom_{Q_M} (c^* V, c^* \charrep_M).
\]
Since $\fsupp V \leq M$, there is a decomposition $c_* c^* V \simeq V \oplus W$ with  $\sigma \notin \supp W$.  The adjoint pair $(c_*, c^*)$ gives an isomorphism
\[
\Hom_{Q_M} (c^* V, c^* \charrep_M) \cong \Hom_Q (c_* c^* V, \charrep_M) = \Hom_Q (V, \charrep_M) \oplus \Hom_Q (W, \charrep_M) . 
\]
which, projected to the first summand on the right hand side, gives $\Phi$ as stated.
\end{proof}

In general, suppose we have a quiver representation $V$, and that we know the isomorphism class of $V|_P$ for every proper subquiver $P \subset Q$. Then we cannot deduce the isomorphism class of $V$ in $\repq$ without further information regarding how to glue together the restricted representations. In our case, we are essentially facing this problem when we try to inductively study representations of $Q$ via gluing and extension of rooted tree quivers.
The base change lemma below is a tool that allows us to utilize information in the rank functors to address this problem.

We will introduce some new notation.  If $U \subseteq V$ are representations of $Q$, and $Z \subseteq V_\sigma$ a vector subspace, then the notation 
\[
U \simeq \charrep_M \otimes_K Z
\]
 means that $U \simeq \charrep_M^{\oplus \dim_K Z}$ and $U_\sigma = Z$.  This does not uniquely identify $U$ as a subrepresentation of $V$. In this case, for any $W \in \repq$ there is an isomorphism
\begin{equation}\label{charrephomtensoreqn}
\Hom_Q (W, \charrep_M ) \otimes_K U_\sigma \cong \Hom_Q (W, \charrep_M \otimes_K U_\sigma) =\Hom_Q (W, U) 
\end{equation}
given by sending an indecomposable tensor $f \otimes u$ on the right hand side to the morphism $w \mapsto f(w) \otimes u$.  It is easy to see that the map is injective, so isomorphism follows from the fact that both spaces have the same dimension.
Similarly, we get that
\begin{equation}\label{charrephomtensor2eqn}
\Hom_Q (U, W) \cong \Hom_Q (\charrep_M, W ) \otimes_K U_\sigma^* . 
\end{equation}

Now we can state and prove the base change lemma that will be our key to getting gluing data from the rank functors.

\begin{lemma}\label{basechangelemma}
Suppose $U, W \subset V$ are subrepresentations such that $V \simeq U \oplus W$ and $U \simeq \charrep_M \otimes_K U_\sigma$ for some $M \in \latt{Q}{\sigma}$. 
\begin{enumerate}[(a)]
\item Suppose $\fsupp W \leq M$, and $\tilde{Z} \subseteq V_\sigma$ is a subspace such that $V_\sigma = U_\sigma \oplus \tilde{Z}$. Then there exists a subrepresentation $Z \subset V$ such that $Z_\sigma = \tilde{Z}$ and $V \simeq U \oplus Z$.
\item Now suppose $W$ is arbitrary, and $\tilde{Z} \subseteq \rank_M V\subset V_\sigma$ is such that $V_\sigma = \tilde{Z} \oplus W_\sigma$.  Then there exists a subrepresentation $Z \subset V$ such that $Z_\sigma = \tilde{Z}$ and $V \simeq Z \oplus W$.
\end{enumerate}
\end{lemma}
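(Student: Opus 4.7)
The plan is to handle the two parts by dual constructions that lift linear-algebraic data at $\sigma$ to morphisms of representations.

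For part (a), note that $\tilde{Z}$, being a complement of $U_\sigma$ in $V_\sigma = U_\sigma \oplus W_\sigma$, projects isomorphically onto $W_\sigma$ and is therefore the graph of a unique linear map $\psi \colon W_\sigma \to U_\sigma$, i.e.\ $\tilde{Z} = \{\psi(w)+w \mid w \in W_\sigma\}$. Since $\fsupp W \leq M$, Lemma \ref{homtocharreplemma} lifts linear functionals on $W_\sigma$ to morphisms $W \to \charrep_M$ realizing them at $\sigma$; combining this with the identification $U \cong \charrep_M \otimes_K U_\sigma$ and equation (\ref{charrephomtensoreqn}), the map $\psi$ itself lifts to a morphism $\tilde\psi \colon W \to U$ with $\tilde\psi_\sigma = \psi$. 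Setting $Z := \{(\tilde\psi(w), w) \mid w \in W\} \subseteq U \oplus W = V$ (the graph of $\tilde\psi$ inside $V$) then yields a subrepresentation satisfying $Z \simeq W$, $Z_\sigma = \tilde{Z}$, and $V = U \oplus Z$ via the shear automorphism $(u,w) \mapsto (u - \tilde\psi(w), w)$.

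For part (b), I will construct $Z$ as the image of a single morphism $F \colon \charrep_M^{\oplus k} \to V$, where $k := \dim \tilde{Z}$. Picking a basis $z_1, \dotsc, z_k$ of $\tilde{Z} \subseteq \rank_M V$, Lemma \ref{rankhomlemma} provides morphisms $f_i \colon \charrep_M \to V$ with $(f_i)_\sigma(v_\sigma) = z_i$; assembling them gives $F$ whose component at $\sigma$ is the inclusion $\tilde{Z} \hookrightarrow V_\sigma$. Because $\dim U_\sigma = \dim \tilde{Z} = k$ (both being complements to $W_\sigma$ in $V_\sigma$), composition with the projection $\pi_U \colon V \to U$ produces a morphism $G := \pi_U \circ F \colon \charrep_M^{\oplus k} \to U \cong \charrep_M^{\oplus k}$. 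Granting that $G$ is an isomorphism, $F$ is a split monomorphism with retraction $G^{-1} \circ \pi_U$, and $Z := \im F$ satisfies $V = Z \oplus W$ (since $\ker(G^{-1} \circ \pi_U) = \ker \pi_U = W$), together with $Z \simeq \charrep_M \otimes_K \tilde{Z}$ and $Z_\sigma = \tilde{Z}$.

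The main obstacle is verifying that $G$ is an isomorphism, which I will do by analyzing the local structure of $\End_Q \charrep_M$. By Corollary \ref{indecompcor}, $\charrep_M$ is indecomposable, so $\End_Q \charrep_M$ is a local finite-dimensional $K$-algebra (Fitting's lemma). Theorem \ref{charrephomthm} together with Proposition \ref{covermapprop}(a) yields $\stabHom_Q(\charrep_M, \charrep_M) = K$, so the codimension-one ideal $I := \{f \colon f_\sigma = 0\}$ consists entirely of non-units and is therefore the maximal ideal, with residue field $K$. Consequently, an endomorphism of $\charrep_M^{\oplus k}$ is invertible iff its scalar matrix at $\sigma$ is invertible, and $G_\sigma \colon \tilde{Z} \to U_\sigma$ is precisely the restriction of the projection $V_\sigma \twoheadrightarrow V_\sigma / W_\sigma \cong U_\sigma$ to the complement $\tilde{Z}$, which is a linear isomorphism because both $\tilde{Z}$ and $U_\sigma$ are complements to $W_\sigma$ in $V_\sigma$. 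This completes the verification and hence the proof.
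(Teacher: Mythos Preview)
Your proof is correct. Part (a) is essentially identical to the paper's argument: both lift the linear map $W_\sigma \to U_\sigma$ (your $\psi$, the paper's $\tilde g$) to a morphism $W \to U$ via Lemma~\ref{homtocharreplemma} and (\ref{charrephomtensoreqn}), and then take $Z$ to be the resulting graph (equivalently, the kernel of the split surjection $(id\ \ g)\colon U\oplus W \to U$).

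Part (b) is a genuinely different route. The paper proceeds symmetrically to (a): it observes that $\pi_W(\tilde Z)\subseteq \rank_M W$ (using additivity of $\rank_M$), then lifts the resulting linear map $\tilde f\colon U_\sigma \to \rank_M W$ to a morphism $f\colon U\to W$ via Lemma~\ref{rankhomlemma} and (\ref{charrephomtensor2eqn}), and defines $Z$ as the kernel of $(f\ \ -id)\colon U\oplus W\to W$. You instead build $Z$ directly as the image of a map $F\colon \charrep_M^{\oplus k}\to V$ realizing the basis of $\tilde Z$, and show $\pi_U\circ F$ is invertible by invoking the local structure of $\End_Q(\charrep_M)$: its radical is exactly $\{f:f_\sigma=0\}$ by Theorem~\ref{charrephomthm} and Proposition~\ref{covermapprop}(a), so invertibility of a matrix over this ring is detected at $\sigma$. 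The paper's approach keeps (a) and (b) formally parallel and needs only the additivity of $\rank_M$; yours trades that observation for a bit of endomorphism-ring machinery, but has the virtue of constructing $Z$ as a copy of $\charrep_M^{\oplus k}$ from the outset rather than as a graph over $U$.
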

\begin{proof}
\begin{enumerate}[(a)]
\item There is a short exact sequence of vector spaces
\[
0 \to \tilde{Z} \xto{i} {U_\sigma \oplus W_\sigma} \xto{(\tilde{f} \ \tilde{g})} {U_\sigma} \to 0
\]
where $i$ is the subspace inclusion, and $\tilde{f}$ is invertible since $V_\sigma = U_\sigma \oplus \tilde{Z}$.  By composing the last map with $\tilde{f}^{-1}$, and replacing $\tilde{g}$ with $\tilde{f}^{-1} \circ \tilde{g}$, we can assume $\tilde{f} = id$ without loss of generality.
By Lemma \ref{homtocharreplemma} and the isomorphism (\ref{charrephomtensoreqn}), we have an injective linear map
\[
\Hom_K (W_\sigma, U_\sigma ) \cong W_{\sigma}^* \otimes_K U_\sigma \into \Hom_Q (W, \charrep_M) \otimes_K U_\sigma \cong \Hom_Q (W, U)
\]
which gives a morphism $g \in \Hom_Q (W, U)$ such that $g_\sigma = \tilde{g}$.  Thus we can define $Z \subset V$ by the split exact sequence in $\repq$
\[
0 \to Z \to U \oplus W \xto{(id \ g)} U \to 0 .
\]

\item Similarly, we can take a short exact sequences of vectors spaces of the form
\[
0 \to \tilde{Z} \xto{i} {U_\sigma \oplus W_\sigma} \xto{(\tilde{f} \ -id)} {W_\sigma} \to 0
\]
and again we want to lift $\tilde{f}$ to some $f \in \Hom_Q (U, W)$.  Let $\pi_U, \pi_W$ be the projections given by the decomposition $V \simeq U \oplus W$.  Since $\tilde{Z} \subseteq \rank_M V$ and $\rank_M$ is additive, we get that 
\[
\pi_W (\tilde{Z}) \subseteq \pi_W (\rank_M V) = \rank_M W .
\]
Now the projections give a decomposition $\tilde{Z} = \pi_U (\tilde{Z}) \oplus \pi_W (\tilde{Z})$, and then exactness of the sequence implies that $(\tilde{f} \circ \pi_U - \pi_W) (\tilde{Z}) = 0$, or 
\[
\tilde{f} \circ \pi_U (\tilde{Z}) = \pi_W (\tilde{Z}) . 
\]
Since $W_\sigma$ is the kernel of the projector $\pi_U$ restricted to $V_\sigma$, and $\tilde{Z} \cap W_\sigma = 0$, we find that $\pi_U (\tilde{Z}) = U_\sigma$ and so $\tilde{f} \in \Hom_Q (U_\sigma, \rank_M W)$.
By Lemma \ref{rankhomlemma}, we have a surjection $\Hom_Q  (\charrep_M, W) \onto \rank_M W$ that we can tensor with $U_\sigma^*$ to get
\[
\Hom_Q (U, W) \cong \Hom_Q (\charrep_M, W) \otimes_K U_\sigma^*\onto \rank_M W \otimes_K U_\sigma^* \cong \Hom_K (U_\sigma , \rank_M W)
\]
using (\ref{charrephomtensor2eqn}).  Thus we can lift $\tilde{f}$ to some $f \in \Hom_Q (U, W)$ such that $f_\sigma = \tilde{f}$, and we can again define $Z \subset V$ by the split exact sequence of representations
\[
0 \to Z \to U \oplus W \xto{(f \ -id)} W \to 0 . \qedhere
\]
\end{enumerate}
\end{proof}

\section{Structure of the Representation Ring of a Rooted Tree}
\label{mainresultsect}

\subsection{Statements of the structure theorems}
The main result of this paper has two equivalent formulations: firstly, as a property of the representation ring of a rooted tree quiver, and secondly, as a ``splitting principal" for representations of such a quiver.
Since rank functors commute with tensor product, we have an equality of vector spaces $\rank_M (V^{\otimes n}) = (\rank_M V )^{\otimes n}$ inside $V^{\otimes n}_x$ for any $M \in \latt{Q}{x}$ and $n \in \Z_{\geq 1}$.  Consequently, we omit the parentheses in this situation. 

\begin{theorem}\label{mainthm}
Let $Q$ be a rooted tree quiver, so that its representation ring $R:=R(Q)$ has a finite decomposition
\[
R \cong \prod_{M \in L_Q} f_M R
\]
as a direct product of rings (Section \ref{fsuppsect}).  Then any indecomposable representation $V \not \simeq \charrep_M$, with $\fsupp V = M$, has nilpotent image in the factor $f_M R$.  Consequently,
each factor has a $\Z$-module decomposition
\[
f_M R \cong \Z f_M \oplus f_M \mathcal{N}
\]
where $\mathcal{N}$ is the nilradical of $R$.
\end{theorem}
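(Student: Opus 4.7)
The plan is to prove this theorem together with its equivalent splitting-principle form (Theorem \ref{splittingthm}), using simultaneous induction on the number of vertices of $Q$. The equivalence of the two formulations is argued separately and amounts to the observation that nilpotence of $f_M V$ is the same as all sufficiently large tensor powers $V^{\otimes n}$ containing no indecomposable summand with fine support equal to $M$ (other than $\charrep_M$ itself, already excluded). Each formulation is more tractable in one of the two sub-cases below, so having both available as induction hypotheses is essential. The base case $Q = \{\sigma\}$ is trivial.

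Fix an indecomposable $V$ with $\fsupp V = M$ and $V \not\simeq \charrep_M$. Property (F6) forces $\rank_M V = 0$, and by multiplicativity of rank functors $\rank_M V^{\otimes n} = 0$ for every $n \geq 1$; applying (F6) again rules out any summand isomorphic to $\charrep_M$ in any $V^{\otimes n}$. By properties (F2) and (F3), every indecomposable summand $W$ of $V^{\otimes n}$ either satisfies $f_M W = 0$ (if $\fsupp W < M$) or has $\fsupp W = M$ with $W \not\simeq \charrep_M$. The goal is to show that for $n$ large only summands of the first type survive, forcing $f_M V^n = 0$.

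The inductive step splits according to whether $Q$ is built by extension from $(P, \tau)$ via an arrow $\alpha \colon \tau \to \sigma$, or by gluing two proper subquivers $P$ and $S$ at $\sigma$. In the extension case, writing $\max(M) = \{S_1, \dotsc, S_m\} \subset \latt{P}{\tau}$, the identification $\latt{Q}{\sigma} = J(\latt{P}{\tau})$ together with Theorem \ref{charreppullbackthm} and the compatibility formulas of Lemma \ref{adjunctioncompatlemma} relate $f_M$ to products of the $f_{S_i}$ over $P$, modulo idempotents of strictly lower fine support; so combinatorial nilpotence over $P$ (using the representation-ring form of the hypothesis) transports upward to $Q$. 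In the gluing case, the splitting-principle form of the hypothesis yields explicit decompositions of $V|_P^{\otimes n}$ and $V|_S^{\otimes n}$ into reduced representations plus lower-fine-support noise, and the base change lemma (Lemma \ref{basechangelemma}) is used to assemble these into a global decomposition of $V^{\otimes n}$, with the rank-intersection identity of Proposition \ref{rankspaceorderprop}(b) together with the vanishing $\rank_M V^{\otimes n} = 0$ ensuring that its hypotheses hold for the summands that need to be eliminated.

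The principal difficulty will be in the gluing step: even though both restrictions decompose cleanly by induction, there is no a priori guarantee that the two local decompositions glue to a global direct-sum decomposition of $V^{\otimes n}$ compatibly at $\sigma$. The base change lemma is the decisive tool, but its complementarity hypothesis at $\sigma$ must be realized by a careful choice of complements, exploiting the vanishing of the relevant rank spaces. Once nilpotence is established, the stated $\Z$-module splitting of $f_M R$ is immediate: by (F4), $f_M R$ is freely generated over $\Z$ by the images $f_M V$ for $V$ indecomposable with $\fsupp V = M$; the single generator $f_M \charrep_M = f_M$ spans $\Z f_M$ and is not nilpotent (being a nonzero idempotent), while every other generator lies in $f_M \mathcal{N}$ by the nilpotence just proved.
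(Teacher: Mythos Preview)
Your proposal has a genuine structural gap. You set up induction on the number of vertices of $Q$ and split into extension versus gluing according to how $Q$ is built, handling an arbitrary $M \in \latt{Q}{\sigma}$ in each branch. The paper is organized differently, and for an essential reason: its primary case split is on whether $M = \hat{1}_Q$ or $M < \hat{1}_Q$, and the induction is on the \emph{complexity ordering} of rooted trees (\S\ref{caseonesect}), not on the vertex count.

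For $M < \hat{1}_Q$ the paper does not argue inside $Q$ at all. It pulls back along $c_M \colon Q_M \to Q$ and shows (Propositions \ref{pushpullrepringprop} and \ref{fpullbackprop}) that $c_M^*$ gives an injective ring map $f_M R(Q) \hookrightarrow f_{\hat{1}} R(Q_M)$. Since $Q_M$ is strictly less complex than $Q$---though it may have \emph{more} vertices---the induction hypothesis applied to $Q_M$ with $\hat{1}$ in place of $M$ finishes this case in one line. Your induction on vertex count cannot support this reduction, and your substitute (``relate $f_M$ to products of the $f_{S_i}$ over $P$'' via Lemma \ref{adjunctioncompatlemma}) does not produce a ring homomorphism carrying nilpotence: the $f_{S_i}$ live in $R(P)$, fine support over $P$ is computed at $\tau$ rather than $\sigma$, and there is no clean formula tying $\fsupp_Q V$ to $\fsupp_P(V|_P)$ for a general $M$. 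The passage to $Q_M$ sidesteps all of this by converting an arbitrary $M$ into the top element of a smaller-complexity lattice.

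The extension/gluing dichotomy does appear in the paper, but only \emph{inside} the case $M = \hat{1}_Q$, where the special role of $\hat{1}$ is what makes the arguments go through (for instance, in the extension case one first deduces $\rank_P(V|_P)=0$ from $\rank_Q V = 0$ and indecomposability, which has no analogue for $M < \hat{1}_Q$). Your attribution of which hypothesis is used where---ring form in extension, splitting form in gluing---is correct for that case, but the gluing argument also needs an inner induction and the strengthened Corollary \ref{supersplittingcor}, not just Theorem \ref{splittingthm}.
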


The ``consequently'' part follows from the first statement because $f_M R$ is freely generated as a $\Z$-module by the indecomposable representations of $V$ with fine support $M$, and $f_M \charrep_M = f_M$.
Since $\Z$ is reduced (has no nilpotent elements), the rank function $r_M \colon R \to \Z$ restricts to the projection
\[
r_M \colon f_M R \to \Z f_M = \left( f_M R \right)_{red}
\]
so that, for any representation $V$, we have
\[
f_M V = r_M (V) f_M + n \qquad \qquad n \in \mathcal{N} . 
\]

\begin{theorem}\label{splittingthm}
Let $N \in \latt{Q}{\sigma}$ be maximal such that $\rank_N V \neq 0$.  Then for some $l > 0$, there exists a subrepresentation $U \simeq \charrep_N \otimes_K (\rank_N V^{\otimes l}) \subseteq V^{\otimes l}$ such that
\[
V^{\otimes l} \simeq U \oplus W
\]
for some $W \subseteq V^{\otimes l}$, necessarily with $\rank_N W = 0$.
\end{theorem}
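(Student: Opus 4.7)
The plan is to deduce this splitting principle from Theorem~\ref{mainthm}; in fact the two are equivalent, and the paper will prove them jointly by induction on the complexity of $Q$. I sketch the ring-theoretic heart of the deduction. Set $d := r_N(V) = \dim_K \rank_N V$. By Theorem~\ref{mainthm}, $f_N R = \Z f_N \oplus f_N \mathcal{N}$, where $\mathcal{N}$ is the nilradical of $R$. Since $r_N \colon R \to \Z$ is a ring homomorphism with $r_N(f_N) = 1$, I can write $f_N V = d\,f_N + n$ with $n \in f_N \mathcal{N}$. Using $f_N^2 = f_N$ and $f_N n = n$, for any $l \geq 1$ this yields
\[
f_N V^{\otimes l} \;=\; (f_N V)^{l} \;=\; d^{l} f_N + \nu_l, \qquad \nu_l \in f_N \mathcal{N}.
\]

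Now take a Krull--Schmidt decomposition $V^{\otimes l} \simeq \bigoplus m_i(l)\, W_i$ and write $m(l)$ for the multiplicity of $\charrep_N$. I will aim to show $m(l) = d^l$ for some $l$. Given this, any Krull--Schmidt complement produces $V^{\otimes l} \simeq \charrep_N^{\oplus d^l} \oplus W$; the $\sigma$-component of $\charrep_N^{\oplus d^l}$ is a $d^l$-dimensional subspace of the $d^l$-dimensional space $\rank_N V^{\otimes l}$ and hence equals it, identifying the splitting summand as $\charrep_N \otimes_K \rank_N V^{\otimes l}$, while additivity of $\rank_N$ on the direct sum forces $\rank_N W = 0$. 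The easy upper bound $m(l) \leq d^l$ is the $\sigma$-component argument: any direct sum of $\charrep_N$'s inside $V^{\otimes l}$ contributes linearly independent vectors in $\rank_N V^{\otimes l}$.

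For the matching lower bound, I would extract a ring-theoretic identity. Maximality of $N$ gives $r_M(V^{\otimes l}) = 0$ for all $M > N$, so every indecomposable summand $W_i$ satisfies $r_M(W_i) = 0$ for $M > N$; in particular $W_i \not\simeq \charrep_M$ for any $M > N$. Property (F3) discards summands with $\fsupp W_i \not\geq N$ from $f_N R$, while Theorem~\ref{mainthm} together with (F6) forces summands with $\fsupp W_i = N$, $W_i \neq \charrep_N$ into the nilpotent part. Matching $\Z f_N$-components in the two expressions $f_N V^{\otimes l} = d^l f_N + \nu_l = \sum m_i(l) f_N W_i$ therefore reduces $m(l) = d^l$ to the claim
\[
\sum_{\substack{W_i \not\simeq \charrep_N \\ \fsupp W_i > N}} m_i(l)\, r_N(W_i) \;=\; 0 \qquad \text{for some sufficiently large } l.
\]

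The main obstacle is precisely this last vanishing: the identity from the main theorem does not by itself preclude indecomposable summands with fine support strictly above $N$ from contributing positively to $r_N$. This is where the paper's induction on the complexity of $Q$ takes over: such summands are analyzed by the extension or gluing decomposition of $Q$, invoking the equivalent Theorem~\ref{mainthm} on the smaller reduced subquivers $Q_M$ that arise and feeding the splitting-principle form of the inductive hypothesis back into the count. Making all problematic multiplicities vanish for some $l$ requires the interlocking use of both formulations of the main result, which is why the paper must treat them in tandem rather than deriving one cleanly from the other.
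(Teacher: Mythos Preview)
Your setup is right, but the ``main obstacle'' you identify is not actually an obstacle, and your conclusion that the implication cannot be done cleanly is wrong. The paper derives Theorem~\ref{splittingthm} directly from Theorem~\ref{mainthm} in a few lines.

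The step you are missing is this: apply Theorem~\ref{mainthm} at \emph{every} $M \in \latt{Q}{\sigma}$, not only at $N$. For each $M$, write $f_M V = r_M(V)\,f_M + n_M$ with $n_M$ nilpotent; since $\latt{Q}{\sigma}$ is finite, there is a single $l$ such that
\[
f_M V^{l} = 0 \quad\Longleftrightarrow\quad r_M(V) = 0
\]
for all $M$ simultaneously. Now suppose some indecomposable summand $W_i$ of $V^{\otimes l}$ has $\fsupp W_i = N' > N$. By (F4), $f_{N'} W_i \neq 0$, hence $f_{N'} V^{l} \neq 0$, hence $r_{N'}(V) \neq 0$ by the displayed equivalence. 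This contradicts the maximality of $N$. So for this $l$ there are \emph{no} summands with fine support strictly above $N$, and your problematic sum is empty.

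With that in hand the rest is immediate (and this is exactly the paper's argument): collect the indecomposable summands $U_i$ of $V^{\otimes l}$ with $r_N(U_i) \neq 0$ and call the remainder $W$. By (F5), $\fsupp U_i \geq N$; by the previous paragraph, $\fsupp U_i = N$; by (F6), $U_i \simeq \charrep_N$. Additivity of $\rank_N$ then gives $U_\sigma = \bigoplus_i (U_i)_\sigma = \rank_N V^{\otimes l}$ and $\rank_N W = 0$. No interlocking induction is needed for this direction; the tandem induction in the paper is used to prove Theorem~\ref{mainthm} itself, not to pass from Theorem~\ref{mainthm} to Theorem~\ref{splittingthm}.
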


Note that if the conclusion holds for some $l$, then it also holds for all $l' > l$.  We will refer to this theorem as the \textbf{splitting principle} for representations of rooted tree quivers.
Strictly speaking, we will only need to use that Theorem \ref{mainthm} implies the splitting principle in order to prove the theorems.  However, we will show that the two theorems are equivalent in order to expand our dictionary between $R(Q)$ and $\repq$.

\begin{proof}[Proof of equivalence of Theorems \ref{mainthm} and \ref{splittingthm}]
Suppose that Theorem \ref{mainthm} is true, and let $V$ be a representation of $Q$.  Then for sufficiently large $l$, in each factor $f_M R$ we have
\[
f_M V^l = 0 \iff r_M (V) = 0 . 
\]
For a fixed $l$ such that this holds, choose any maximal $N \in \latt{Q}{\sigma}$ such that $f_N V^l \neq 0$.  Then write $V^{\otimes l} \simeq \left( \bigoplus_i U_i \right) \oplus W$ where each $U_i$ is indecomposable with $r_N (U_i ) \neq 0$, and $r_N (W) = 0$.  Then for each $i$, $\fsupp U_i \geq N$ by (F5).
Now take any $N' \geq N$ maximal such that $V^{\otimes l}$ has a direct summand with fine support $N'$.  Properties (F3) and (F4) imply that $f_{N'} V^l \neq 0$, so $N' = N$ by maximality of $N$.
Thus $V^{\otimes l}$ has no direct summands with fine support strictly greater than $N$, so $\fsupp U_i=N$. By property (F6), each $U_i \simeq \charrep_N$.  This gives a decomposition as in Theorem \ref{splittingthm}.

Now assume that Theorem \ref{splittingthm} is true, and let $V$ be as in the hypotheses of Theorem \ref{mainthm}.
We proceed by induction on the order of
\[
\nonvan_V := \setst{M' \in \latt{Q}{\sigma} }{\rank_{M'} V \neq 0 } . 
\]
Since $V$ has fine support $M \in \latt{Q}{\sigma}$, we have that $V_\sigma \neq 0$, so $\# \nonvan_V \geq 1$.  Now choose any maximal element $N$ of $\nonvan_V$.
Applying Theorem \ref{splittingthm}, we get 
\[
V^{\otimes l} \simeq U \oplus {W}
\]
with $U \simeq \charrep_N \otimes_K (\rank_N V^{\otimes l})$.
Property (F5) implies that $N \leq \fsupp V = M$.  Since we assumed $V \not \simeq \charrep_M$, property (F6) implies that $\rank_M V = 0$, so $N < M$.  Thus $f_M U = 0$
by equation (\ref{charreptimesfeqn}), so $f_M V^l = f_M {W}$.

By properties (F1) and (F2), every direct summand of $W$ has fine support less than or equal to $M$.  Then (F3) and (F4) imply that $f_M W = \sum_i f_M X_i$ where $X_i$ are the indecomposable summands of $W$ with fine support exactly $M$.  Since $\rank_M X_i \subseteq \rank_N X_i \subseteq \rank_N W = 0$, we know that no $X_i \not \simeq \charrep_M$.  Now by the induction hypothesis, each $f_M X_i$ is nilpotent, and hence $f_M W = f_M V^l$ is nilpotent.
\end{proof}

We illustrate the splitting principle with an example.

\begin{example}
Let $Q$ be the five subspace quiver labeled as below, and $\alpha$ a dimension vector for $Q$:
\[
Q = \fivesubspacemaps{1}{2}{3}{4}{5}{\sigma}{}{}{}{}{} \qquad \alpha = \fivesubspaced{2}{2}{2}{2}{2}{3} . 
\]
Let $V \in \repq$ be an indecomposable of dimension $\alpha$, so it can be thought of as a three dimensional vector space with a collection of five specified planes $\setst{V_i}{1 \leq i \leq 5}$.  Assume $V$ is general in the sense that the planes $V_i$ are in general position, with pairwise intersection of dimension one and the intersection of any three planes being zero.

In the notation of Example \ref{nsubspaceeg}, the element $J = \{1, 2 \} \in \latt{Q}{\sigma}$ is maximal such that the corresponding rank space, $\rank_J V = V_1 \cap V_2$, is nonzero.
So the splitting principal (Theorem \ref{splittingthm}) says that for some $l > 0$ we have $V^{\otimes l} \simeq U \oplus W$, where $U$ is an indecomposable direct summand such that
\[
U_\sigma = \left(\bigcap_{i \in J} V_i^{\otimes l}\right) \qquad \text{and} \qquad \dimv(U) \simeq \fivesubspaced{1}{1}{0}{0}{0}{1} . 
\]
Furthermore, we necessarily have that $W_1 \cap W_2 = 0$ since rank functors are additive and multiplicative.  Repeating this process with other pairs $J' = \{i, j \}$, for large $m$ we eventually get a decomposition
\[
V^{\otimes m} \simeq \bigoplus \charrep_{i, j} \oplus Z
\]
where the sum is taken over all two element subsets $\{i, j\} \subset \{1, 2, 3, 4, 5 \}$, and $\charrep_{i,j}$ is the analogous representation to $U$ above but whose support is the vertex set $\{i, j, \sigma \}$.
Furthermore, $Z$ must a representation of $Q$ given by a collection of subspaces with all pairwise intersections zero.
\end{example}

Theorem \ref{mainthm} will be proven by induction on the ``complexity'' of $Q$ (see below).  
When $M < \hat{1}_Q$, we can use the combinatorics of Section \ref{reducedrepssect} to utilize the induction hypothesis.
The case that $M = \hat{1}_Q$ is essentially computational, utilizing an inductive application of splitting principle.

\subsection{The case $M < \hat{1}_Q$}\label{caseonesect}
Using the connection between reduced representations of $Q$ and those of $Q_M$ given by Theorem \ref{charreppullbackthm}, we can easily handle this case by induction.  However, $Q_M$ may have more vertices than $Q$, so we must find another ordering to use induction on.  It turns out that the set of rooted tree quivers, $\rtrees$, can be well-ordered such that a reduced quiver $Q_M$ over $Q$ is less than or equal to $Q$, with equality only for $Q_M = Q$.  This \textbf{complexity ordering} turns out to be essentially lexicographical order, if we encode rooted trees in the right notation.

A sequence of rooted tree quivers $(\Gamma_1, \dotsc, \Gamma_n )$ defines a rooted tree quiver by extending each $\Gamma_i$ from its sink, then gluing these extensions together at their sinks:
\[
(\Gamma_1, \dotsc, \Gamma_n) := \vcenter{
\xymatrix@R=-1ex{
{\Gamma}_1 \ar[dr]		& \\
{\vdots}		& {\sigma} \\
{\Gamma}_n \ar[ur]		&	}} .
\]
Any rooted tree arises in this way.  We recursively define a well-ordering on $\rtrees$.
Let $\rtrees_i \subset \rtrees$ be the collection of rooted tree quivers which have a path of length $i$, and no longer path.  Then $\rtrees_0$ has one element, the rooted tree with one vertex, and for $k \geq 1$,
\[
\rtrees_k = \setst{(\Gamma_1,\dotsc, \Gamma_n)}{\Gamma_1 \in \rtrees_{k-1} \text{ and } \Gamma_i \in \bigcup_{j=0}^{k-1} \rtrees_j} . 
\]
The set $\{\rtrees_k \}$ is a partition of $\rtrees$.  Now let $\Gamma, \Lambda$ be arbitrary rooted tree quivers, say with $\Gamma \in \rtrees_k$ and $\Lambda \in \rtrees_l$.   If $k < l$, then define $\Gamma < \Lambda$.  If $k =l$, then we can write
\begin{equation}\label{treeseqeqn}
\Gamma = (\Gamma_1,\dotsc, \Gamma_n),
\qquad \Lambda = (\Lambda_1, \dotsc, \Lambda_m)
\end{equation}
with $\{ \Gamma_i \}$ and $\{ \Lambda_i \}$ contained in $\bigcup_{j=0}^{k-1} \rtrees_j$, which we can assume is well-ordered.
By requiring that the sequences in (\ref{treeseqeqn}) be weakly decreasing, these expressions are unique.  To simplify the order condition below, we will assume $n=m$ by filling out the shorter sequence with symbols $\emptyset$ which we take to be less than all rooted trees.
Now define $\Gamma < \Lambda$ if and only if $\Gamma_i < \Lambda_i$ for the smallest $i$ such that $\Gamma_i \neq \Lambda_i$.  In other words, the ordering in $\rtrees_k$ is lexicographical with respect to the ordering on $\bigcup_{j=0}^{k-1} \rtrees_j$.

\begin{prop}
If $Q' \xto{c} Q$ is a reduced quiver over $Q$, then $Q' \leq Q$ in the complexity ordering, with equality if and only if $c= id$.
\end{prop}
\begin{proof}
Let $Q' = (Q'_1, \dotsc, Q'_m)$ and $Q = (Q_1, \dotsc, Q_n)$ in the notation above.  Then the structure map $c$ sends each $Q'_i$ into some $Q_{\varphi(i)}$, defining a function
\[
\varphi \colon [1, \dotsc, m] \to [1, \dotsc, n] \qquad c (Q'_i) \subseteq Q_{\varphi(i)} . 
\]
Each $Q'_i$ is a reduced quiver over $Q_{\varphi(i)}$, by the construction of \S\ref{reducedquiversect}.  We prove the proposition by induction on the number of vertices of $Q$.  By the induction hypothesis and our notational convention, $Q'_1 \leq Q_{\varphi(1)} \leq Q_1$.  If any of these inequalities is strict, then $Q' < Q$ and we are done. If these are equalities, then by induction $c$ restricts to the identity on $Q'_1$, and the reducedness assumption for $Q'$ implies that no other $Q'_i$ maps into $Q_{\varphi(1)}$. Hence $(Q'_2, \dotsc, Q'_m)$ is a reduced quiver over $(Q_2, \dotsc, Q_n)$, with structure map the restriction of $c$.  By the induction hypothesis, $(Q'_2, \dotsc, Q'_m) \leq (Q_2, \dotsc, Q_n)$, with equality if and only if $c$ restricts to the identity on $(Q'_2, \dotsc, Q'_m)$. Hence $Q' \leq Q$ with equality if and only if $c$ is the identity on all of $Q'$.
\end{proof}

\begin{prop}\label{pushpullrepringprop}
For $M \in \latt{Q}{\sigma}$ with $M < \hat{1}_Q$, let $c := c_M$.  Then the composition 
\[
c_* \circ c^* \colon R \to R(Q_M) \to R
\]
is the identity on $f_M R$.
\end{prop}
\begin{proof}
Let $V \in \repq$ with $\fsupp V = M$.  Since both $c_*$ and $c^*$ are additive, so is the composition.  For any $N$, (F2) implies that
\[
\fsupp ( \charrep_N \otimes V) \leq M
\]
and so $c_* c^* (\overline{\charrep_N V}) = \overline{\charrep_M} (\overline{\charrep_N V}) = \overline{\charrep_N V}$ by Lemma \ref{pullpushtensorlemma} and Corollary \ref{charreptensorcor}.  Since $c_*$ and $c^*$ are both additive, we can use the definition of $f_M$ to compute
\begin{align*}
c_* c^* (f_M V) &= c_* c^* \left( \sum_{N \leq M} \mu(N, M) \overline{\charrep_N V} \right) = \sum_{N \leq M} \mu(N, M) c_* c^* \left( \overline{\charrep_N V} \right) \\
&= \sum_{N \leq M} \mu(N, M) \overline{\charrep_N V} = f_M V . 
\end{align*}
By property (F4), $f_M R$ is generated as a $\Z$-module by $\setst{f_M V}{\fsupp V = M}$; hence $c_* c^*$ is the identity on $f_M R$.
\end{proof}

This shows that $c^*$ gives an embedding of the factor $f_M R$ into the representation ring of $Q_M$.  Now we'll see that the image lies in the ``top'' factor of $R(Q_M)$.

\begin{prop}\label{fpullbackprop}
For any $M \in \latt{Q}{\sigma}$, $c_M^* f_M = f_{\hat{1}}$ in $R(Q_M)$.
\end{prop}
\begin{proof}
Let $A$ be any finite lattice, and $\delta := \sum_{x \in A} \mu(x, \hat{1}) x$ in the M\"obius algebra of $A$.  There is a factorization
\[
\delta = \prod_{x \prec \hat{1}} (\hat{1} -x)
\]
(cf. \cite[Cor.~3.9.4]{stanleyenumcombin}).  By Theorem \ref{charreppullbackthm}, we know that $c_M^* (\overline{\charrep_N}) = \overline{\charrep_{\pi^* (N)}}$ (in $R_\sigma$).  Lemma \ref{coatomlemma} gives that $\pi^*(M) = \hat{1}$, and $N \prec M$ if and only if $\pi^* (N) \prec \hat{1}$.  Since $c_M^*$ is a ring homomorphism,
\[
c_M^* (f_M) = \prod_{N \prec M} \left( c_M^* (\overline{\charrep_M}) - c_M^* (\overline{\charrep_N}) \right) = \prod_{ A \prec \hat{1}} \left( \overline{\charrep_{\hat{1}}} - \overline{\charrep_{A}} \right) = f_{\hat{1}} . \qedhere
\]
\end{proof}

\begin{proof}[Proof of Theorem \ref{mainthm} for $M < \hat{1}$]
Assume that Theorem \ref{mainthm} holds for rooted tree quivers which are less complex than $Q$.
The last two propositions and the induction hypothesis give an injective ring homomorphism
\[
c^* \colon f_M R \into f_{\hat{1}} R(Q_M) . 
\]
If $V \not \simeq \charrep_M$ is indecomposable with fine support $M$, then $\rank_{Q_M} (c^* V) = \rank_M V = 0$, so $c^* V$ has no direct summands of $\charrep_{\hat{1}} = \id_{Q_M}$. Thus
$c^*(f_M V) = f_{\hat{1}} c^* V$ is nilpotent by the induction hypothesis, and so $f_M V$ is also nilpotent since $c^*$ is injective.
\end{proof}
\subsection{The case $M = \hat{1}_Q$}
This case is essentially computational, and will require a number of technical lemmas.
We will need a few facts from linear algebra.  
\comment{
The first is elementary, but we would like to have it stated for the record.
\begin{lemma}\label{sumintersectlemma}
Let $A, B, C$ be subspaces of some (finite dimensional) vector space $V$.  If either $C \subseteq A$ or $C \supseteq B$, then
\[
C \cap (A \oplus B) = (C \cap A) \oplus (C \cap B) \quad \square
\]
\end{lemma}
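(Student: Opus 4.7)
The plan is to begin by noting that the notation $A \oplus B$ silently asserts $A \cap B = 0$, which automatically forces $(C \cap A) \cap (C \cap B) \subseteq A \cap B = 0$, so the sum on the right hand side is indeed direct. The inclusion $(C \cap A) \oplus (C \cap B) \subseteq C \cap (A \oplus B)$ is free of any hypothesis: for $v \in C \cap A$ and $w \in C \cap B$, the sum $v + w$ lies in $C$ because $C$ is a subspace and lies in $A + B = A \oplus B$ by construction.

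The content of the lemma is the reverse inclusion, which I would handle by a two-case argument corresponding exactly to the disjunction in the hypothesis. In both cases, take $u \in C \cap (A \oplus B)$ and write $u = a + b$ uniquely with $a \in A$, $b \in B$. If $C \subseteq A$, then $u \in A$, so $b = u - a \in A$; combined with $b \in B$ this forces $b \in A \cap B = 0$, hence $u = a \in C \cap A$ (and $C \cap B \subseteq A \cap B = 0$ contributes nothing). If instead $B \subseteq C$, then $b \in B \subseteq C$ places $b$ in $C \cap B$, and then $a = u - b$ lies in $C$ because $u, b \in C$, so $a \in C \cap A$; thus $u = a + b \in (C \cap A) + (C \cap B)$.

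There is no real obstacle here — the only conceptual point worth flagging is that the analogous identity fails without either hypothesis (this is the well-known failure of the modular law in the lattice of subspaces of $V$), so the case distinction is essential rather than cosmetic, and one should not expect a shorter unified argument.
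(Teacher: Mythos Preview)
Your argument is correct. The paper does not actually give a proof of this lemma: it is stated with a terminal $\square$ and the surrounding text calls it elementary, so there is nothing to compare against. Your case split on the hypothesis, using $A \cap B = 0$ to kill $b$ in the first case and $B \subseteq C$ to place $b$ in $C$ in the second, is the standard verification.
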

}
The following two lemmas roughly say that subspaces of a vector space become ``more spread out'' as we take tensor powers.

\begin{lemma}
Let $V$ and $W$ be finite dimensional vector spaces.
If $A_1, A_2, X \subset V$ and $B_1, B_2, Y \subset W$ are subspaces such that $X \cap A_1 = Y \cap B_2 = 0$, then
\[
\Bigl( X \otimes Y \Bigr) \cap \Bigl(A_1 \otimes B_1 + A_2 \otimes B_2\Bigr)  = 0 . 
\]
\end{lemma}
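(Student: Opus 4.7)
The plan is to construct a single linear map $V \otimes W \to X \otimes Y$ which acts as the identity on $X \otimes Y$ but annihilates both summands $A_1 \otimes B_1$ and $A_2 \otimes B_2$; then any element of the stated intersection is forced to be zero.

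The hypothesis $X \cap A_1 = 0$ makes the sum $X + A_1$ direct, so there exists a direct sum decomposition $V = X \oplus A_1 \oplus V'$ for some complement $V'$. Let $\pi \colon V \to X$ denote the projection onto the first factor along $A_1 \oplus V'$. By construction $\pi|_X = \mathrm{id}_X$ and $\pi(A_1) = 0$. Using $Y \cap B_2 = 0$ in exactly the same way, I obtain a projection $\rho \colon W \to Y$ satisfying $\rho|_Y = \mathrm{id}_Y$ and $\rho(B_2) = 0$.

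Now consider the tensor product map $\pi \otimes \rho \colon V \otimes W \to X \otimes Y$. On $X \otimes Y$ it restricts to the identity since each factor does. On the simple tensors of $A_1 \otimes B_1$ it vanishes because $\pi(A_1) = 0$, and on the simple tensors of $A_2 \otimes B_2$ it vanishes because $\rho(B_2) = 0$; by bilinearity it kills both full summands, hence their sum. Thus any $z$ in the intersection satisfies $z = (\pi \otimes \rho)(z) = 0$.

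The only mild subtlety is noting that the asymmetric hypothesis (pairing $A_1$ with $B_2$ rather than $A_i$ with $B_i$) is perfectly matched to this argument: each of the two summands in $A_1 \otimes B_1 + A_2 \otimes B_2$ is annihilated by a \emph{different} one of the two tensor factors $\pi, \rho$. Beyond this observation, no real obstacle is expected; the argument is short, functorial, and does not require choosing bases.
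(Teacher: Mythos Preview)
Your proof is correct and is essentially identical to the paper's: both choose projections $\pi\colon V\to X$ with $A_1\subseteq\ker\pi$ and $\rho\colon W\to Y$ with $B_2\subseteq\ker\rho$, then observe that $\pi\otimes\rho$ is the identity on $X\otimes Y$ while annihilating $A_1\otimes B_1 + A_2\otimes B_2$. The paper's write-up is slightly terser, but the argument is the same.
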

\begin{proof}
Choose projections $\pi_X \colon V \to X$ and $\pi_Y \colon W \to Y$ such that $A_1 \subseteq \ker \pi_X$ and $B_2 \subseteq \ker \pi_Y$.  Then
\[
(\pi_X \otimes \pi_Y)(A_1 \otimes B_1 + A_2 \otimes B_2) \subseteq (\pi_X \otimes \pi_Y)(A_1 \otimes B_1) + (\pi_X \otimes \pi_Y)( A_2 \otimes B_2) = 0
\]
hence the intersection is empty.
\end{proof}

\begin{lemma}\label{subspacepowerlemma}
Let $\{ V_i \}_{i =1}^n$ and $W$ be a collection of subspaces of a finite dimensional vector space $V$, and suppose that $W \cap V_i = 0$ for all $i$.  Then $W^{\otimes s} \cap \sum V_i^{\otimes s} = 0 $ for $s \geq n$.
\end{lemma}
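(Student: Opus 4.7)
The plan is to prove this by induction on $n$, using the preceding lemma as the workhorse for the inductive step. For the base case $n=1$, since $W \cap V_1 = 0$, we can pick a basis of $V$ adapted to the decomposition $V = W \oplus V_1 \oplus V'$ (for some complement $V'$); then $W^{\otimes s}$ and $V_1^{\otimes s}$ are spanned by pure tensors in disjoint sets of basis vectors, so their intersection in $V^{\otimes s}$ is zero for all $s \geq 1$.

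For the inductive step, assume the result holds for any collection of $n-1$ subspaces (with any $s \geq n-1$) and suppose $W \cap V_i = 0$ for $i = 1, \ldots, n$ with $s \geq n$. I would split off the first tensor factor and apply the previous lemma with
\[
X = W, \quad Y = W^{\otimes (s-1)}, \quad A_1 = V_1, \quad B_1 = V_1^{\otimes(s-1)}, \quad A_2 = V, \quad B_2 = \sum_{i=2}^{n} V_i^{\otimes(s-1)}.
\]
The hypothesis $X \cap A_1 = W \cap V_1 = 0$ holds by assumption, and $Y \cap B_2 = 0$ holds by the induction hypothesis applied to the $n-1$ subspaces $V_2, \ldots, V_n$ and the power $s-1 \geq n-1$. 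The previous lemma then gives
\[
W^{\otimes s} \cap \Bigl( V_1 \otimes V_1^{\otimes(s-1)} + V \otimes \sum_{i=2}^n V_i^{\otimes(s-1)} \Bigr) = 0.
\]

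Finally, I would observe that the ambient set on the left contains $\sum_{i=1}^n V_i^{\otimes s}$, since $V_1^{\otimes s} = V_1 \otimes V_1^{\otimes(s-1)}$ and for $i \geq 2$ one has $V_i^{\otimes s} = V_i \otimes V_i^{\otimes(s-1)} \subseteq V \otimes V_i^{\otimes(s-1)}$. This completes the induction. The main subtlety is choosing the decomposition of $\sum V_i^{\otimes s}$ so that the previous lemma applies cleanly: the trick is to enlarge the ``second piece'' on the $A_2$ side to all of $V$, which costs nothing because the cancellation is happening in the $B_2$ factor via the induction hypothesis. This is precisely the numerical reason the bound $s \geq n$ is sharp: each application of the previous lemma consumes one tensor factor to eliminate one subspace from the sum.
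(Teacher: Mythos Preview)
Your proof is correct and follows essentially the same strategy as the paper: induction on $n$ using the preceding lemma to peel off one subspace and one tensor factor at a time. The only cosmetic differences are that the paper starts the induction at $n=2$ (after first reducing to the case $s=n$) and splits off the \emph{last} tensor factor rather than the first, whereas you start at $n=1$ and carry general $s\geq n$ through the induction directly; neither change affects the substance.
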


\begin{proof} First note that it is enough to show that the intersection is 0 for $s=n$, since for $s>n$, 
\[
W^{\otimes s} \cap \sum V_i^{\otimes s} \subseteq \left( W^{\otimes n}\otimes V^{\otimes s-n} \right) \cap \left( \sum V_i^{\otimes n} \otimes V^{\otimes s-n} \right) \subseteq \left( W^{\otimes n} \cap \sum V_i^{\otimes n} \right) \otimes V^{\otimes s-n} = 0 . 
\]
Use induction on $n$, the base case being $n = 2$. 
For this, take $X = Y = W$, $A_i = V_1$, and $B_i = V_2$ in the above lemma.
For the induction step, take another subspace $V_{n+1}$ such that $W \cap V_{n+1} =0$, and assume that 
\[
W^{\otimes s} \cap \sum_{i=1}^n V_i^{\otimes s} = 0 
\]
for $s \geq n$.  Now using the previous lemma again, with 
\[
X = W^{\otimes s}, \quad Y = W, \quad A_1 = \sum_{i=1}^{n} V_i^{\otimes s} ,\quad B_1 = V,\quad A_2 = V_{n+1}^{\otimes s},\quad B_2 = V_{n+1}
\]
we find that
\[
W^{\otimes s+1} \cap \sum_{i=1}^{n+1} V_i^{\otimes s+1} \subseteq W^{\otimes s} \otimes W \cap \left( \left( \sum_{i=1}^{n} V_i^{\otimes s} \right) \otimes V + V_{n+1}^{\otimes s} \otimes V_{n+1} \right) = 0 .  \qedhere
\]
\end{proof}

\begin{lemma}
Let $M, N \in \latt{Q}{\sigma}$ and $V \in \repq$ with $\fsupp V \leq N$.  Choose $u_M, u_N$ some fixed nonzero vectors in $(\charrep_M)_\sigma$ and $(\charrep_N)_\sigma$, respectively.
Then there exists a morphism
\[
\theta \colon \charrep_M \otimes V \to \charrep_N \otimes V
\]
such that $\theta_\sigma (u_M \otimes v) = u_N \otimes v$ for all $v \in V_\sigma$.
\end{lemma}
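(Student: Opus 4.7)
The plan is to use the adjunction $(c_{N*}, c_N^*)$ to exhibit a section $\tilde\iota$ of the counit $\epsilon^N_V \colon \charrep_N \otimes V \to V$ (identifying $\charrep_N \otimes V = c_{N*} c_N^* V$ via Lemma \ref{pullpushtensorlemma}) whose $\sigma$-component is exactly $v \mapsto u_N \otimes v$, and then to transfer this section through the adjunction $(c_{M*}, c_M^*)$ to construct $\theta$.

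First I will decompose $V = V' \oplus V''$ so that every indecomposable summand of $V'$ contains $\sigma$ in its support while no summand of $V''$ does. The hypothesis $\fsupp V \leq N$ then amounts to an isomorphism $\charrep_N \otimes V' \cong V' \oplus U'$ with $\sigma \notin \supp U'$, so $V'$ is a direct summand, and in particular a retract, of $c_{N*}(c_N^* V')$. A standard categorical fact about adjunctions---that the counit $\epsilon^N_Y$ admits a section whenever $Y$ is a retract of some $c_{N*}(X)$ (one obtains it by composing a chosen retraction with the canonical splitting $c_{N*}(\eta^N_X)$ of $\epsilon^N_{c_{N*}X}$ supplied by the triangle identity)---then produces $\tilde\iota \colon V' \to \charrep_N \otimes V'$ with $\epsilon^N_{V'} \circ \tilde\iota = \id_{V'}$. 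Extending $\tilde\iota$ by zero on $V''$ yields $\tilde\iota_{\mathrm{full}} \colon V \to \charrep_N \otimes V$.

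Because $c_N^{-1}(\sigma) = \{\sigma_N\}$, the counit $\epsilon^N_{V'}$ at $\sigma$ is the identity on $V'_\sigma$ under the canonical identification $(\charrep_N \otimes V')_\sigma = K u_N \otimes V'_\sigma \leftrightarrow V'_\sigma$. The section identity $\epsilon^N_{V'} \circ \tilde\iota = \id_{V'}$ therefore forces $\tilde\iota_\sigma(v) = u_N \otimes v$ for every $v \in V'_\sigma$, and since $V''_\sigma = 0$ the extension $\tilde\iota_{\mathrm{full}}$ has the same $\sigma$-value on all of $V_\sigma$. I will then define
\[
\theta := \epsilon^M_{\charrep_N \otimes V} \circ c_{M*}\bigl(c_M^*\, \tilde\iota_{\mathrm{full}}\bigr) \colon \charrep_M \otimes V \to \charrep_N \otimes V,
\]
and the analogous singleton fiber $c_M^{-1}(\sigma) = \{\sigma_M\}$ makes the pushforward act as $\tilde\iota_{\mathrm{full}, \sigma}$ at $\sigma$ and makes $\epsilon^M_{\charrep_N \otimes V, \sigma}$ the identity on $(\charrep_N \otimes V)_\sigma$, so the composition at $\sigma$ gives $\theta_\sigma(u_M \otimes v) = u_N \otimes v$ as required. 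The main delicate point will be carefully tracking the canonical identifications of the one-dimensional spaces $(\charrep_K)_\sigma = K u_K$ when computing the $\sigma$-components of each factor in the composition.
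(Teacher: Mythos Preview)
Your argument is correct and takes a genuinely different route from the paper's. Both proofs begin the same way: from $\fsupp V \leq N$ one extracts a map $f\colon V \to \charrep_N \otimes V$ with $f_\sigma(v)=u_N\otimes v$ (you do this via the adjunction $(c_{N*},c_N^*)$ and the abstract fact that the counit splits on retracts of $c_{N*}(X)$; the paper just cites the direct-sum decomposition). The divergence is in how to get from $\charrep_M\otimes\charrep_N\otimes V$ down to $\charrep_N\otimes V$. The paper routes through $\charrep_{M\wedge N}$, using Corollary~\ref{charreptensorcor} to project $\charrep_M\otimes\charrep_N\to\charrep_{M\wedge N}$ and then Theorem~\ref{charrephomthm} to produce $\rho\colon\charrep_{M\wedge N}\to\charrep_N$. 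You instead use the counit $\epsilon^M$ directly; under the identification $c_{M*}c_M^*W\cong\charrep_M\otimes W$ this is just the ``sum over the fiber'' map $\charrep_M\otimes W\to W$, which is the identity at $\sigma$ since $c_M^{-1}(\sigma)$ is a singleton. In fact, by naturality of $\epsilon^M$ your composite simplifies further to $\theta=\tilde\iota_{\mathrm{full}}\circ\epsilon^M_V$, which makes the verification at $\sigma$ immediate.

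What each approach buys: your argument is shorter and more categorical, avoiding both Corollary~\ref{charreptensorcor} and Theorem~\ref{charrephomthm} entirely; it really only needs the adjunction of Proposition~\ref{adjointprop}, the identification of Lemma~\ref{pullpushtensorlemma}, and the singleton fibers over $\sigma$. The paper's route, while longer, keeps the lattice element $M\wedge N$ visible and exercises the structural machinery already in place. One small remark: your decomposition $V=V'\oplus V''$ is a sensible precaution, since the alternative characterization of fine support (giving $\charrep_N\otimes V\simeq V\oplus U$) is only stated for $V$ all of whose indecomposable summands have $\sigma$ in their support; the paper glosses over this, but since only the $\sigma$-component of $f$ matters and $V''_\sigma=0$, nothing goes wrong either way.
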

\begin{proof}
The map $\theta$ will be the following composition, to be explained one step at a time:
\[
\theta \colon \charrep_M \otimes V \xto{id \otimes f} \charrep_M \otimes \charrep_N \otimes V \xto{g \otimes id} \charrep_{M \wedge N} \otimes V \xto{\rho \otimes id} \charrep_N \otimes V . 
\]

Since $\fsupp V \leq N$, there is a decomposition
$\charrep_N \otimes V \simeq V \oplus W$
for some $W$, giving a map $f \colon V \into \charrep_N \otimes V$ which sends $v \mapsto u_N \otimes v$ at $\sigma$.  Tensoring this with $\charrep_M \xto{id} \charrep_M$ gives the first map.
From Corollary \ref{charreptensorcor}, there is a map $g \colon \charrep_M \otimes \charrep_N \to \charrep_{M \wedge N}$ which is an isomorphism at $\sigma$.  Tensoring this with the identity on $V$ gives the second map.
By Theorem \ref{charrephomthm}, we can choose a map $\rho \colon \charrep_{M \wedge N} \to \charrep_N$ such that $\rho_\sigma \left( g(u_M \otimes u_N) \right)= u_N$.  Tensoring this with the identity on $V$ gives the third map, so that the composition maps $u_M \otimes v$ to $u_N \otimes v$ at $\sigma$.
\end{proof}

\begin{lemma}\label{mainthmlemmatwo}
Let $\{ S_1, \dotsc, S_n \} \subseteq \latt{Q}{\sigma}$ be an arbitrary subset, and $V \in \repq$ such that $\fsupp V \leq T \in \latt{Q}{\sigma}$.  Let $u_i \in (\charrep_{S_i})_\sigma$ be nonzero, and $z \in \charrep_T$ also nonzero.  Define
\[
Y := \left( \charrep_T \otimes V \right) \oplus \bigl( \bigoplus_i \charrep_{S_i} \otimes V \bigr)
\]
so that $Y_\sigma = (Kz \otimes V_\sigma) \oplus (\bigoplus_i Ku_i \otimes V_\sigma)$, and set $w_i := u_i - z$.

Then Y has a direct sum decomposition $Y \simeq (\charrep_T \otimes V) \oplus W$, where $W \subset Y$ is a subrepresentation such that
\[
W_\sigma = \bigoplus_i Kw_i \otimes V_\sigma . 
\]
\end{lemma}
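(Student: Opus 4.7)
\medskip

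\noindent\textbf{Proof proposal.} The plan is to directly build an automorphism of $Y$ that fixes the summand $\charrep_T \otimes V$ and ``shears'' the remaining summands so that, at $\sigma$, the vectors $u_i\otimes v$ become $w_i\otimes v = (u_i-z)\otimes v$. The crucial input is the preceding lemma, applied with $M=S_i$ and $N=T$: since $\fsupp V \leq T$ by hypothesis, for each $i$ we obtain a morphism
\[
\theta_i \colon \charrep_{S_i}\otimes V \to \charrep_T \otimes V
\]
such that at $\sigma$ we have $(\theta_i)_\sigma(u_i\otimes v) = z\otimes v$ for all $v\in V_\sigma$ (taking $u_M=u_i$ and $u_N=z$ in that lemma).

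Writing elements of $Y$ as tuples $(y_T, y_1, \dotsc, y_n)$ with $y_T\in \charrep_T\otimes V$ and $y_i\in \charrep_{S_i}\otimes V$, I would define
\[
\phi\colon Y \to Y, \qquad \phi(y_T, y_1, \dotsc, y_n) := \Bigl(y_T - \sum_{i=1}^n \theta_i(y_i),\; y_1, \dotsc, y_n \Bigr).
\]
This is a morphism of representations (being built from $\id$ and the $\theta_i$), and it has an obvious inverse sending $(y_T,y_1,\dotsc,y_n)$ to $(y_T + \sum_i \theta_i(y_i), y_1,\dotsc,y_n)$; hence $\phi$ is an automorphism of $Y$. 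By construction $\phi$ fixes the summand $\charrep_T\otimes V$ pointwise, so setting
\[
W := \phi\Bigl(\bigoplus_{i=1}^n \charrep_{S_i}\otimes V\Bigr)
\]
yields a subrepresentation of $Y$ isomorphic to $\bigoplus_i \charrep_{S_i}\otimes V$ and, since $\phi$ is an isomorphism,
\[
Y = \phi(Y) = (\charrep_T\otimes V) \oplus W .
\]

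It remains to identify $W_\sigma$. The restriction of $\phi$ at $\sigma$ sends an element of $(\charrep_{S_j}\otimes V)_\sigma$ of the form $u_j\otimes v$ to the element whose $T$-component is $-z\otimes v$ and whose $S_j$-component is $u_j\otimes v$; viewed inside $Y_\sigma$, this is simply $u_j\otimes v - z\otimes v = w_j\otimes v$. Therefore
\[
W_\sigma = \sum_{j=1}^n K w_j \otimes V_\sigma,
\]
and since the vectors $w_1,\dotsc,w_n = u_1-z,\dotsc,u_n-z$ are linearly independent inside $Kz\oplus\bigoplus_i Ku_i$ (they are the images of $u_1,\dotsc,u_n$ under the injective linear map $x\mapsto x-z$ restricted to $\bigoplus Ku_i$), this sum is direct, giving $W_\sigma = \bigoplus_{i} K w_i\otimes V_\sigma$ as required. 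There is no real obstacle in this argument: once the previous lemma supplies the $\theta_i$, the rest is an explicit change-of-basis via the shear $\phi$.
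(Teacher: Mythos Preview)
Your argument is correct and is essentially the paper's own proof: both invoke the preceding lemma to obtain the maps $\theta_i$ and then set $W$ equal to the image of $\bigoplus_i \charrep_{S_i}\otimes V$ under the shear $x \mapsto x - \sum_i \theta_i(x)$. The only difference is packaging---you extend this shear to an automorphism of all of $Y$ (making the direct sum decomposition immediate), whereas the paper defines the map only on $X=\bigoplus_i \charrep_{S_i}\otimes V$ and checks that its image is complementary to $\charrep_T\otimes V$; one small quibble is that your justification ``injective linear map $x\mapsto x-z$'' is not literally linear, though the independence of the $w_i$ is clear anyway (e.g.\ by dimension count, since $\phi_\sigma$ is an isomorphism).
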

\begin{proof}
For each $i$, the previous lemma gives a morphism
\[
\theta_i \colon \charrep_{S_i} \otimes V \to \charrep_T \otimes V
\]
such that $(\theta_i)_\sigma (u_i \otimes v) = z \otimes v$ for all $v \in V_\sigma$.  Writing $X := \bigoplus_i \charrep_{S_i} \otimes V$, we get a map
\[
\phi := (id_X - \sum_i \theta_i ) \colon X \to Y . 
\]
Since the image of $\sum_i \theta_i$ is contained in $\charrep_T \otimes V$, the map $\phi$ is injective and $\im \phi \cap \charrep_T \otimes V=0$.  Hence $W:= \im \phi$ is a complementary subrepresentation to $\charrep_T \otimes V$, and $\phi_\sigma ( u_i \otimes v) = (u_i - z) \otimes v = w_i \otimes v$ for $v \in V_\sigma$.
\end{proof}

To make the induction step, we will actually need a stronger version of the splitting principal. We will show, however, that this stronger version follows as a corollary of Theorem \ref{splittingthm}, so that we can apply it under the induction hypothesis.

\begin{corollary}[to Theorem \ref{splittingthm}]\label{supersplittingcor}
Let $Z \subseteq V_\sigma$ be a subspace such that both $Z \subseteq \rank_M V$ for some $M \in \latt{Q}{\sigma}$, and $Z \cap \rank_N V = 0$ for $N \nleq M$.  Then for some $l \geq 0$, $V^{\otimes l}$ has a subrepresentation $U  \simeq \charrep_M \otimes_K Z^{\otimes l}$ such that
\[
V^{\otimes l} \simeq U \oplus W
\]
for some $W \subseteq V^{\otimes l}$.
\end{corollary}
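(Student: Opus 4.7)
The plan is to induct on the cardinality of $\Delta := \{N \in \latt{Q}{\sigma} : \rank_N V \neq 0 \text{ and } N \nleq M\}$, repeatedly applying Theorem \ref{splittingthm} to split off reduced summands for indices in $\Delta$ while tracking $Z$ through Lemma \ref{subspacepowerlemma}.

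For the base case $|\Delta|=0$, every $N$ with $\rank_N V \neq 0$ satisfies $N\leq M$, so $M$ is the unique maximum of $\{N : \rank_N V \neq 0\}$. Theorem \ref{splittingthm} then yields, for some $l$, a splitting $V^{\otimes l} \simeq \tilde{U} \oplus \tilde{W}$ with $\tilde{U} \simeq \charrep_M \otimes_K (\rank_M V)^{\otimes l}$ and $\tilde{U}_\sigma = (\rank_M V)^{\otimes l}$. Since rank functors commute with tensor products, $Z^{\otimes l} \subseteq \tilde{U}_\sigma$; choosing any vector-space complement $\tilde{U}_\sigma = Z^{\otimes l} \oplus Z'$ produces the sub-decomposition $\tilde{U} \simeq (\charrep_M \otimes_K Z^{\otimes l}) \oplus (\charrep_M \otimes_K Z')$, and $U := \charrep_M \otimes_K Z^{\otimes l}$ is the desired summand.

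For the inductive step $|\Delta|\geq 1$, pick $N_1$ maximal in $\{N : \rank_N V \neq 0\}$; since $\Delta$ is nonempty and any maximal element $\leq M$ would equal $M$, such an $N_1$ may be chosen in $\Delta$, hence $N_1\nleq M$. Apply Theorem \ref{splittingthm} with some $l_1 \geq 2$ to obtain $V^{\otimes l_1} = U_1 \oplus W_1$ with $U_1 \simeq \charrep_{N_1} \otimes_K (\rank_{N_1} V)^{\otimes l_1}$ and $(U_1)_\sigma = (\rank_{N_1} V)^{\otimes l_1}$. Lemma \ref{subspacepowerlemma} applied to the single subspace $\rank_{N_1} V$ gives $Z^{\otimes l_1} \cap (U_1)_\sigma = 0$, so the projection $\pi\colon V^{\otimes l_1} \to W_1$ along $U_1$ is injective on $Z^{\otimes l_1}$ at $\sigma$; set $\tilde{Z}_1 := \pi_\sigma(Z^{\otimes l_1}) \subseteq (W_1)_\sigma$. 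Now I verify the corollary's hypotheses for $(W_1, \tilde{Z}_1, M)$. The inclusion $\tilde{Z}_1 \subseteq \rank_M W_1$ follows from additivity of $\rank_M$ across $V^{\otimes l_1} = U_1 \oplus W_1$ together with $Z^{\otimes l_1} \subseteq \rank_M V^{\otimes l_1}$. For transversality, suppose $y \in \tilde{Z}_1 \cap \rank_{N'} W_1$ with $N' \nleq M$; writing $y = \pi_\sigma(z)$ for some $z \in Z^{\otimes l_1}$ and decomposing $z = u + y$ with $u \in (U_1)_\sigma = (\rank_{N_1} V)^{\otimes l_1}$ and $y \in \rank_{N'} W_1 \subseteq (\rank_{N'} V)^{\otimes l_1}$, we get $z \in (\rank_{N_1} V)^{\otimes l_1} + (\rank_{N'} V)^{\otimes l_1}$; Lemma \ref{subspacepowerlemma} with $n=2$, $s = l_1 \geq 2$, and the subspaces $\rank_{N_1} V, \rank_{N'} V$ (both disjoint from $Z$ by hypothesis) then forces $z = 0$, hence $y = 0$. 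Because $\rank_{N_1} W_1 = 0$, the analogous set $\Delta_{W_1}$ is a proper subset of $\Delta$, so the inductive hypothesis supplies, for some $l_2$, a direct summand of $W_1^{\otimes l_2}$ isomorphic to $\charrep_M \otimes_K \tilde{Z}_1^{\otimes l_2} \simeq \charrep_M \otimes_K Z^{\otimes l_1 l_2}$, which sits inside $V^{\otimes l_1 l_2}$ since $W_1^{\otimes l_2}$ is itself a direct summand.

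The hardest step is the transversality verification for $(W_1, \tilde{Z}_1, M)$. A more direct alternative, applying base change (Lemma \ref{basechangelemma}) to rotate the splitting $V^{\otimes l_1} = U_1 \oplus W_1$ so that $Z^{\otimes l_1}$ literally lies inside $W_1$, does not go through: Lemma \ref{basechangelemma}(a) requires $\fsupp W_1 \leq N_1$, which need not hold from only $\rank_{N_1} W_1 = 0$, while Lemma \ref{basechangelemma}(b) changes $U_1$ instead of $W_1$. Working with the projected subspace $\tilde{Z}_1$ and invoking Lemma \ref{subspacepowerlemma} twice, once in its single-subspace form and once with $n=2$, at the mild cost of taking $l_1 \geq 2$, sidesteps this obstruction.
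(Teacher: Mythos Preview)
Your induction on $|\Delta|$ is a clean alternative to the paper's projection-and-minimization argument, and the verification of the hypotheses for $(W_1,\tilde Z_1,M)$ is correct. But there is a genuine gap at the very end of the inductive step.

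Recall that in this paper the notation $U \simeq \charrep_M \otimes_K Z^{\otimes l}$ means both $U \simeq \charrep_M^{\oplus \dim Z^{\otimes l}}$ \emph{and} $U_\sigma = Z^{\otimes l}$ as a subspace of $V_\sigma^{\otimes l}$. Your inductive hypothesis produces a summand $U \subseteq W_1^{\otimes l_2} \subseteq V^{\otimes l_1 l_2}$ with $U_\sigma = \tilde Z_1^{\otimes l_2}$, not $Z^{\otimes l_1 l_2}$; these subspaces have the same dimension but are generally distinct, since $\tilde Z_1 = \pi_\sigma(Z^{\otimes l_1}) \subseteq (W_1)_\sigma$ while $Z^{\otimes l_1}$ does not lie in $(W_1)_\sigma$. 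Writing ``$\charrep_M \otimes_K \tilde Z_1^{\otimes l_2} \simeq \charrep_M \otimes_K Z^{\otimes l_1 l_2}$'' conflates abstract isomorphism with the paper's stricter convention, and the corollary (and its application in the gluing case of the main theorem) really does need $U_\sigma = Z^{\otimes l}$.

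The fix is one application of Lemma~\ref{basechangelemma}(b). Write $V^{\otimes l_1 l_2} = A \oplus W_1^{\otimes l_2} = A \oplus U \oplus W'$, where $A$ collects the mixed tensor terms, and set $W := A \oplus W'$. You need $Z^{\otimes l_1 l_2} \cap W_\sigma = 0$. If $z \in Z^{\otimes l_1 l_2} \cap (A_\sigma \oplus W'_\sigma)$, write $z = a + w'$; then $\pi_\sigma^{\otimes l_2}(z) = w'$ (since $\pi_\sigma^{\otimes l_2}$ kills $A_\sigma$ and is the identity on $(W_1^{\otimes l_2})_\sigma$), and also $\pi_\sigma^{\otimes l_2}(z) \in \tilde Z_1^{\otimes l_2} = U_\sigma$, so $w' \in U_\sigma \cap W'_\sigma = 0$. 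Hence $\pi_\sigma^{\otimes l_2}(z)=0$, and since $\pi_\sigma$ is injective on $Z^{\otimes l_1}$, so is $\pi_\sigma^{\otimes l_2}$ on $Z^{\otimes l_1 l_2}$, giving $z=0$. Now Lemma~\ref{basechangelemma}(b) (with $\tilde Z = Z^{\otimes l_1 l_2} \subseteq \rank_M V^{\otimes l_1 l_2}$) replaces $U$ by a summand with the correct $\sigma$-component, and its proof shows the new summand is isomorphic to $U$.

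With this addition your argument is complete, and it is genuinely different from the paper's: you induct directly on $|\Delta|$ and invoke Lemma~\ref{subspacepowerlemma} only with $n\le 2$, whereas the paper first peels off all $N \nleq M$ to make the remaining rank spaces comparable to $M$, then builds an auxiliary projection $\pi\colon V_\sigma \to Z$, runs a minimization argument to isolate a summand $B$ with $M$ maximal for its rank, and only then applies the theorem and the same base-change lemma. Your route is shorter; the paper's route avoids ever passing to a projected subspace $\tilde Z_1$ and so never has to ``un-project'' at the end.
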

\begin{proof}
Assume that we've proven Theorem \ref{splittingthm} for a quiver $Q$.  Note that the corollary holds as stated if and only if it holds after replacing $Z$ and $V$ with $Z^{\otimes k}$ and $V^{\otimes k}$ in the hypotheses, for any $k > 0$.  Similarly, a power $V^{\otimes k}$ can be replaced with a direct summand $W \subset V^{\otimes k}$ containing $Z^{\otimes k}$ at any point in the proof, since direct summands of $W^{\otimes l}$ are also direct summands of $V^{\otimes kl}$.

First we reduce to the case that $\rank_N V \neq 0$ only for $N$ which are comparable to $M$.  To this end, suppose that there exists some $N \ngeq M$ such that $\rank_N V \neq 0$, and take a maximal $N$ with this property.  By the theorem, we get
\[
V^{\otimes l} \simeq U \oplus W
\]
with $U \simeq \charrep_N \otimes_K (\rank_N V^{\otimes l})$.  Hence,
\[
Z^{\otimes l} \subseteq \rank_M V^{\otimes l} = \rank_M U \oplus \rank_M W = \rank_M W
\]
where $\rank_M U = 0$ since $M \nleq N$.  Then we can work in $W$, with $\rank_N W = 0$.  Repeating this process, we can assume all nonzero rank spaces are comparable to $M$.

Now by Lemma \ref{subspacepowerlemma}, we can replace $V$ and $Z$ with some tensor powers of themselves and assume that $Z \cap X = 0$, where
\[
X := \left( \sum_{N \nleq M} \rank_N V \right) . 
\]
Choose a vector space projection $\pi \colon V_\sigma \onto Z$ such that $X \subseteq \ker \pi$ and $\pi |_Z = id_Z$.

Now we claim that for large enough $l$, $V^{\otimes l} \simeq A \oplus B$ with $A_\sigma \subseteq \ker(\pi^{\otimes l})$ and $M$ maximal such that $\rank_M B \neq 0$:  suppose for contradiction that this is not possible, and take a decomposition as above such that $A_\sigma \subseteq \ker(\pi^{\otimes l})$ and the quantity
\begin{equation}\label{splitcormineqn}
\# \setst{M' \nleq M}{\rank_{M'} B \neq 0}
\end{equation}
is minimal.  If this quantity is 0, then the claim is verified.  If not, let $N \nleq M$ be maximal such that $\rank_N B \neq 0$, and apply the theorem to get $B^{\otimes l'} \simeq C \oplus D$ with $C_\sigma = \rank_N B^{\otimes l'}$ and $\rank_N D = 0$.  Then
\begin{equation}\label{splitcortensoreqn}
\left(V^{\otimes l} \right)^{\otimes l'} \simeq A^{\otimes l'} \oplus \cdots \oplus B^{\otimes l'} \simeq A^{\otimes l'} \oplus \cdots \oplus C \oplus D
\end{equation}
and $\setst{M' \nleq M}{\rank_{M'} D \neq 0} \subsetneq \setst{M' \nleq M}{\rank_{M'} B \neq 0}$.  All the summands represented by $\cdots$ have at least one tensor factor of $A$, so each is also in $\ker ( \pi^{\otimes l l'} )$.   Also $C_\sigma \subseteq \rank_N V^{\otimes l l'} \subseteq \ker (\pi^{\otimes ll'})$, since $N \nleq M$, and thus
\[
\pi^{\otimes l l'} \left( A^{\otimes l'}_\sigma \oplus \cdots \oplus C_\sigma \right) \subseteq \pi^{\otimes l l'} (A^{\otimes l'}_\sigma) + \pi^{\otimes l l'} (\cdots) + \pi^{\otimes l l'} (C_\sigma) = 0
\]
and so (\ref{splitcortensoreqn}) contradicts the minimality of the quantity in (\ref{splitcormineqn}).

Now, again replacing $V^{\otimes l}$ with $V$, and $\pi^{\otimes l}$ with $\pi$, we have a decomposition $V \simeq A \oplus B$ with $A_\sigma \subseteq \ker \pi$ and $M$ maximal such that $\rank_M B \neq 0$.  We can apply the theorem to $B$ to get
\[
V^{\otimes l} \simeq A^{\otimes l} \oplus \cdots B^{\otimes l} \simeq A^{\otimes l} \oplus \cdots \oplus C \oplus D = A' \oplus C \oplus D
\]
where the last equality is just collecting summands.  Here we have that $A'_\sigma \subseteq \ker(\pi^{\otimes l})$, just as in the argument of the last paragraph, and $C \simeq \charrep_M \otimes_K C_\sigma$.  But also $\rank_M D = 0$, so
\[
Z^{\otimes l} \subseteq \rank_M V^{\otimes l} = \rank_M A' \oplus \rank_M C \oplus \rank_M D = \rank_M A' \oplus \rank_M C
\]
and so we can work in $A' \oplus C$.  Replacing $A' \oplus C$ with $V$ and $Z^{\otimes l}$ with $Z$, we can now assume that $V$ has a decomposition $V \simeq C \oplus A'$ with $C \simeq \charrep_M \otimes_K C_\sigma$ and $A'_\sigma \subseteq \ker \pi$.  The last containment implies that
\[
\ker \pi = \ker \pi \cap (C_\sigma \oplus A'_\sigma) = (\ker \pi \cap C_\sigma) \oplus A'_\sigma . 
\]
Via a linear combination of scalar endomorphisms, we can write $C \simeq C' \oplus C''$, with $\dim_K C'_\sigma = \dim_K Z$ and $C''_\sigma = \ker \pi \cap C_\sigma$, then let $U: =C'$ and $W:=A' \oplus C''$.  Now $W_\sigma = \ker \pi$, by dimension count, and since $\pi |_Z = id_Z$, it must be that $Z \cap W_\sigma = 0$.  By Lemma \ref{basechangelemma}, we can make a change of basis in $V$ to get  $U_\sigma = Z$.   This proves the corollary.
\end{proof}

\begin{proof}[Proof of Theorem \ref{mainthm} for M = $\hat{1}_Q$]  We are assuming that Theorem \ref{splittingthm} holds for quivers which are less complex than $Q$.  Suppose $V$ is indecomposable and $\fsupp V = \hat{1}_Q$, but $V \not \simeq \id_Q$.  Then $\rank_Q V = 0$ by Theorem \ref{kinserrankthm}, and we need to show that $f_{\hat{1}_Q} V$ is nilpotent.  We are assuming that the theorem holds for rooted tree quivers which are less complex than $Q$; in particular, it holds for subquivers of $Q$, so we can use the usual extension and gluing cases.

\extcase
If $\rank_P V|_P \neq 0$, then by Theorem \ref{kinserrankthm} we have a direct sum decomposition
\[
V|_P \simeq (\id_P \otimes_K \rank_P V) \oplus W
\]
with $\rank_P W = 0$.  But because $\rank_Q V = 0$, Lemma \ref{glueextranklemma} implies that $\rank_P V|_P \subseteq \ker V_\alpha$.  Then $W$ would extends to a direct summand of $V$ over $Q$ by setting $W_\sigma = V_\sigma$, contradicting the indecomposability of $V$.  Hence $\rank_P V = 0$.

Then by induction, $f_{\hat{1}_P} V \in R(P) \subset R(Q)$ is nilpotent, so $f_{\hat{1}_P} V^l = 0$ for large $l$. This implies that there is a direct sum decomposition of the restriction
\[
V|_P^{\otimes l} \simeq \bigoplus_i V_i \oplus Z
\]
such that each $V_i$ is indecomposable with $(V_i)_\tau \neq 0$ and $\fsupp V_i < \hat{1}_P$, and $Z_\tau = 0$.
The lattice $\latt{Q}{\sigma} = J (\latt{P}{\tau})$ has a unique coatom 
\[
C = \latt{P}{\tau} \setminus \{ \hat{1}_P \} = \gen{C_1, \dotsc, C_k }
\]
where $\{C_i\}$ is the set of coatoms of $\latt{P}{\tau}$.
We will show that for such an $l$ as above, $\fsupp V^{\otimes l} \leq C$.

First, restricting to $P$ we get:
\[
(\charrep_C \otimes V^{\otimes l})|_P = (\charrep_C )|_P \otimes (V^{\otimes l})|_P = \bigl(\bigoplus_j \charrep_{C_j} \bigr) \otimes \bigl(\bigoplus_i V_i \oplus Z \bigr) = \bigoplus_i Y_i \oplus \tilde{Z}
\]
where $Y_i := \bigl( \bigoplus_j \charrep_{C_j} \bigr) \otimes V_i$, and $\tilde{Z} = \bigl( \bigoplus_j \charrep_{C_j} \bigr) \otimes Z$.
For each indecomposable summand $V_i$, choose a coatom $T_i \in \{C_1, \dotsc, C_k \}$ such that $T_i \geq \fsupp V_i$.  At least one such $T_i$ exists because $\fsupp V_i < \hat{1}$.  Let $u_j \in (\charrep_{C_j})_\sigma$ be nonzero such that $(\charrep_C)_\alpha (u_j) = u_\sigma \in (\charrep_C)_\sigma$ for all $j$ (that is, the $u_j$ and $u_\sigma$ are part of a standard basis for $\charrep_C$).
Then apply Lemma \ref{mainthmlemmatwo} with $\{S_1, \dotsc, S_n \} = \{C_1, \dotsc, C_k \} \setminus \{ T_i \}$ to write each
\[
Y_i  = (\charrep_{T_i} \otimes V_i) \oplus \bigl( \bigoplus_{C_j \neq T_i} \charrep_{C_j} \otimes V_i \bigr) \simeq (\charrep_{T_i} \otimes V_i) \oplus W_i
\]
with $(W_i)_\tau = \bigoplus_j K(u_j - u_i) \otimes (V_i)_\tau \subseteq \ker (\charrep_C \otimes V^{\otimes l})_\alpha$.  Then since $\fsupp V_i \leq T_i$, each $\charrep_{T_i} \otimes V_i \simeq V_i \oplus U_i$ with  $(U_i)_\tau = 0$, and by setting $X := \bigl( \bigoplus_i W_i \oplus U_i \bigr) \oplus \tilde{Z}$ we get
\[
(\charrep_C \otimes V^{\otimes l})|_P \simeq (\bigoplus_i V_i) \oplus X \simeq V^{\otimes l}|_P \oplus X
\]
with $X_\sigma \subseteq \ker(\charrep_C \otimes V^{\otimes l})_\alpha$.  Since this isomorphism sends $u_i \otimes v$ to $(v, 0)$ for $v \in V^{\otimes l}_\tau$, the map $(\charrep_C \otimes V^{\otimes l})_\alpha$ acts on the right hand side just as $V^{\otimes l}_\alpha$, so the decomposition extends to give $\charrep_C \otimes V^{\otimes l} \simeq V^{\otimes l} \oplus X$ with $\sigma \not \in \supp X$.  Hence $\fsupp V^{\otimes l} \leq C$, and so $f_{\hat{1}_Q} V^l = 0$.

\gluecase Suppose $Q$ is a gluing of $P$ and $S$, as usual, but also take $P$ to be a one point extension of some smaller quiver (so there is a unique arrow $a \in P\arrows$ with $ha = \sigma$).  Denote by $\hat{1}_P$ and $\hat{1}_S$ the maximal elements of $\latt{P}{\sigma}$ and $\latt{S}{\sigma}$, respectively.  We show that $f_{\hat{1}_Q} V$ is nilpotent by induction on $d = \# \setst{M \in \latt{S}{\sigma}}{\rank_{(\hat{1}_P, M)} V \neq 0}$.  

If $d=0$, then $\rank_P V = \rank_{(\hat{1}_P , \hat{0}_S)} V = 0$, and the induction hypothesis gives (just as in the extension case)
\[
\fsupp V|_P^{\otimes l} \leq C
\]
for large $l$, where $C$ is the unique coatom of $\latt{P}{\sigma}$.
But then, regardless of how $V|_S^{\otimes l}$ decomposes, we get that $\charrep_{(C, \hat{1}_S)} \otimes V^{\otimes l} = V^{\otimes l} \oplus U$, so $\fsupp V^{\otimes l} \leq (C, \hat{1}_S) < \hat{1}_Q$.  Hence $f_{\hat{1}_Q} V^l = 0$.

If $d > 0$, choose any $M \in \latt{S}{\sigma}$ maximal such that $Z := \rank_{(\hat{1}_P, M)} V \neq 0$.  Then $M \neq \hat{1}_S$ since $\rank_{(\hat{1}_P, \hat{1}_S)} V = \rank_Q V = 0$ by hypothesis.  Now by maximality, $Z \subseteq \rank_M V$ and $Z \cap \rank_N V = 0$ for $N \nleq M$ in $\latt{S}{\sigma}$.   An inductive application of Corollary \ref{supersplittingcor} over $S$ then gives subrepresentations $U, W \subset V|_S^{\otimes l}$ such that $U \simeq \charrep_M \otimes_K Z^{\otimes l}$, and $V|_S^{\otimes l} = U \oplus W$.

But over $P$, since $Z^{\otimes l} \subseteq \rank_P V|_P^{\otimes l}$, Theorem \ref{kinserrankthm} gives subrepresentations $X, Y \subset V|_P^{\otimes l}$ such that $X = \id_P \otimes_K Z^{\otimes l}$ and $V|_P^{\otimes l} = X \oplus Y$-- in particular, $X_\sigma = Z^{\otimes l} = U_\sigma$.  By Lemma \ref{basechangelemma}, we can take $Y_\sigma = W_\sigma$.  So we have subrepresentations $A, B \subset V^{\otimes l}$ defined by
\[
A := \begin{cases}
X & \text{over }P \\
U & \text{over }S
\end{cases}
\qquad \qquad
B := \begin{cases}
Y & \text{over }P \\
W & \text{over }S
\end{cases}
\]
giving a direct sum decomposition $V^{\otimes l} \simeq A \oplus B$, since this holds over both $P$ and $S$.  Now since 
\[
A \simeq \charrep_{(\hat{1}_P, M)} \otimes_K (\rank_{(\hat{1}_P, M)} V^{\otimes l})
\]
we know that both $f_{\hat{1}_Q} A = 0$ and $\rank_{(\hat{1}_P, M)} B = 0$.  By the induction hypothesis in this gluing case, $f_{\hat{1}_Q} B$ is nilpotent, so $f_{\hat{1}_Q} (A + B) = f_{\hat{1}_Q} V^l$ is nilpotent.
\end{proof}

The completes the proof of the main result.
The commentary after the statement of Theorem \ref{mainthm} gives as an immediate corollary:
\begin{corollary}
The rank functions on $Q$ give an isomorphism
\[
R(Q)_{red} \xto{\sim} \prod_{M \in L_Q} \Z \qquad \quad \overline{V} \mapsto \bigl( r_M (V) \bigr) . 
\]
In particular, $R(Q)_{red}$ is a finitely generated $\Z$-module.
\end{corollary}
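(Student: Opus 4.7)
The plan is to bootstrap directly from Theorem \ref{mainthm}. That theorem supplies the ring decomposition $R(Q) \cong \prod_{M \in L_Q} f_M R$, the $\Z$-module splitting $f_M R \cong \Z f_M \oplus f_M \mathcal{N}$, and the formula $f_M V = r_M(V) f_M + n$ with $n \in \mathcal{N}$ displayed immediately after its statement.

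First I would verify that $f_M \mathcal{N}$ is precisely the nilradical of the factor ring $f_M R$: it consists of nilpotents, and the quotient $f_M R / f_M \mathcal{N} \cong \Z f_M \cong \Z$ is reduced since $\Z$ has no nonzero nilpotents. Because reduction commutes with finite direct products of commutative rings, this gives
\[
R(Q)_{red} \cong \prod_{M \in L_Q} (f_M R)_{red} \cong \prod_{M \in L_Q} \Z f_M .
\]
I would then identify each factor $\Z f_M$ with $\Z$ via $\overline{f_M} \leftrightarrow 1$. Under this identification, the formula $f_M V = r_M(V) f_M + n$ shows that the $M$-th component of the image of $\overline{V}$ is exactly $r_M(V)$, since the summand $n$ vanishes after reducing. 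Assembling these isomorphisms over all $M \in L_Q$ yields the claimed formula $\overline{V} \mapsto (r_M(V))_M$, and this map is a ring homomorphism because each rank function $r_M$ is.

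For the final clause, finite generation of $R(Q)_{red}$ over $\Z$ is immediate: $L_Q = \coprod_{x \in Q\verts} \latt{Q}{x}$ is a finite disjoint union of finite lattices, so $\prod_{M \in L_Q} \Z$ is free of finite rank as a $\Z$-module. No serious obstacle arises; this corollary is purely a repackaging of Theorem \ref{mainthm} in terms of rank functions, with all the substantive content already established in that theorem's proof.
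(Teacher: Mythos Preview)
Your proof is correct and follows essentially the same approach as the paper. The paper's own treatment simply states that the corollary is immediate from the commentary after Theorem~\ref{mainthm}; your write-up unpacks that commentary (the identification $(f_M R)_{red} = \Z f_M$ via the formula $f_M V = r_M(V) f_M + n$) and adds the routine observations that reduction commutes with finite products and that $L_Q$ is finite.
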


\begin{example}
Continuing Examples \ref{exampletwo} and \ref{latticeseg}, we can calculate that
\[
\rank_{\Z} R(Q)_{red} = \sum_{x \in Q_0} \#\latt{Q}{x} = 31
\]
where $\rank_{\Z}$ is the rank as an abelian group.  This is because each of the three source vertices contribute 1, and we can count $\# \latt{Q}{\tau} = 8$ and $\#\latt{Q}{\sigma} = 20$ from the diagrams in Example \ref{latticeseg}.
\end{example}

\begin{corollary}
For a fixed $V \in \rep Q$, only finitely many indecomposable representations appear as direct summands of the representations $\{ V^{\otimes i} \}_{i \geq 0}$.  In other words, there exists a finite set of indecomposables $\{V_k \}$ such that $\{ V^{\otimes i} \}_{i \geq 0}$ is contained in the subcategory additively generated by $\{V_k\}$.
\end{corollary}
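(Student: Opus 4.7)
The plan is to use Theorem \ref{mainthm} to fit the entire sequence $\{V^{\otimes i}\}_{i \geq 0}$ inside a single finitely generated $\Z$-submodule of $R(Q)$. Since $R(Q)$ is a free $\Z$-module on isomorphism classes of indecomposables, any finitely generated $\Z$-submodule has nonzero coefficients on only finitely many indecomposables; because each $V^{\otimes i}$ is a genuine representation, its indecomposable summands must then all come from such a finite list.

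First I would use the product decomposition $R(Q) \cong \prod_{M \in L_Q} f_M R$ and write, in each factor,
\[
f_M V = r_M(V)\, f_M + \eta_M, \qquad \eta_M \in f_M \mathcal{N}.
\]
By Theorem \ref{mainthm} the element $\eta_M$ is nilpotent, so choose $k_M \geq 1$ with $\eta_M^{k_M} = 0$ and set $K := \max_{M \in L_Q} k_M$, which is finite because $L_Q$ is finite.

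Next I would binomial-expand $(f_M V)^i$ inside $f_M R$. Using $f_M^2 = f_M$ and $\eta_M \in f_M R$ (so $f_M \eta_M = \eta_M$), together with the identity $(f_M V)^i = f_M V^{\otimes i}$ coming from the idempotence of $f_M$, one obtains
\[
f_M V^{\otimes i} = \sum_{j=0}^{\min(i,\, K-1)} \binom{i}{j}\, r_M(V)^{i-j}\, f_M \eta_M^j.
\]
Summing over $M \in L_Q$ exhibits $V^{\otimes i}$ as a $\Z$-linear combination of the finite set $\mathcal{F} := \{ f_M \eta_M^j : M \in L_Q,\ 0 \leq j \leq K-1 \} \subset R(Q)$. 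Each element of $\mathcal{F}$ is itself a $\Z$-combination of only finitely many indecomposables, so the $\Z$-span of $\mathcal{F}$ is supported on a finite set $\{V_1,\dotsc,V_n\}$ of indecomposable classes, and every $V^{\otimes i}$ decomposes into summands drawn from this set.

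There is no real obstacle here: Theorem \ref{mainthm} has already done the heavy lifting by forcing $L_Q$ to be finite and each $\eta_M$ to be nilpotent, so that the expansion truncates uniformly at $j = K-1$. The only small bookkeeping point is to remember that $f_M$ is the multiplicative identity of the factor ring $f_M R$ rather than of $R$, which is precisely what justifies the identity $(f_M V)^i = f_M V^{\otimes i}$ used throughout the expansion.
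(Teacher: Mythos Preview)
Your proof is correct. Both you and the paper show that $\{V^{\otimes i}\}_{i\geq 0}$ lies in a finitely generated $\Z$-submodule of $R(Q)$, but by different routes.

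The paper argues abstractly: $R(Q)_{red}$ is a finitely generated $\Z$-module (the preceding corollary), hence integral over $\Z$; lifting through the nilradical shows $R(Q)$ itself is integral over $\Z$, so the subalgebra $\Z[V]$ is integral and finitely generated as an algebra, hence a finitely generated $\Z$-module. Your approach instead unpacks this integrality by hand: in each factor $f_M R$ you write $f_M V = r_M(V) f_M + \eta_M$ and use the nilpotence of $\eta_M$ to truncate the binomial expansion, producing an explicit finite spanning set $\mathcal{F}$ for all the $V^{\otimes i}$. In effect you are exhibiting the monic relation $\prod_{M}(x - r_M(V))^{K}$ that $V$ satisfies, rather than invoking integrality as a black box.

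What each buys: the paper's version is two lines and makes clear that the result depends only on $R(Q)_{red}$ being module-finite over $\Z$, not on the particular idempotent decomposition. Your version is more constructive: it gives an explicit bound (roughly $|L_Q|\cdot K$ generators) and avoids the small lemma that integrality lifts along the quotient $R \to R_{red}$.
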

\begin{proof}
The ring $R_{red}$ is a finitely generated $\Z$-module, hence integral over $\Z$.  This implies that $R$ is integral over $\Z$ also, and hence the subalgebra $\Z [V]$ is too.  Then $\Z[V]$ is a finitely generated $\Z$-module, which gives the conclusion.
\end{proof}

\section{Conclusion}
\label{conclusionsect}

If $Q$ is any quiver now, and $R(Q)_{red}$ is a finitely generated $\Z$-module, say that $Q$ is of \textbf{finite multiplicative type} (over $K$).  Since the ring $R(Q)$ generally depends on the field $K$, this property could also.
Then so far we know that Dynkin quivers of any orientation and rooted tree quivers are of finite multiplicative type over any field.
A natural question to ask is then, ``What other quivers are of finite multiplicative type?".
The first observation we make regarding this question is that if $Q$ is of finite multiplicative type over $K$, then so is any \textbf{minor} of $Q$; that is, any quiver obtained from $Q$ by contracting edges and removing any combination of vertices and edges (cf. \cite[\S1.7]{diestelgraphtheory}).

\begin{prop}
The set of finite multiplicative type quivers over a given field $K$ is minor closed.
\end{prop}
\begin{proof}
Let $Q$ be of finite multiplicative type.  If $Q'$ is obtained from $Q$ by contracting an edge, then there is an injective homomorphism of rings $R(Q') \into R(Q)$ given on representations by assigning the identity map to the contracted edge (cf. \cite[\S6]{herschende6}).
If $Q'$ is obtained from $Q$ by removing some vertex or edge, then $R(Q')$ is isomorphic to the quotient of $R(Q)$ by the ideal generated by $e_P$ for all connected subquivers $P$ containing that vertex or edge.
\end{proof}


It is easy to see that the loop quiver $\tilde{A}_0$ is not of finite multiplicative type over an infinite field $K$.
The trace of an endomorphism is additive with respect to direct sum, and multiplicative with respect to tensor product, hence extends to a ring homomorphism
\[
\trace \colon R(\tilde{A}_0) \onto K . 
\]
A field $K$ is infinite if and only if it is not a finitely generated $\Z$-module.  Since a field is reduced, the trace map factors through $R(\tilde{A}_0)_{red}$, so $\tilde{A}_0$ is not of finite multiplicative type when $K$ is infinite.
The case that $K$ is finite can be handled by a more sophisticated argument, provided by an anonymous referee.  The map sending an endomorphism to its characteristic polynomial can be used to map the Grothendieck ring of $\rep_K (\tilde{A}_0)$ into $W(K)$, the ring of universal Witt vectors over $K$ (see \cite[p.~330]{MR1878556} or \cite[Ch.~IX]{MR2284892} for Witt vectors, and \cite{MR523461} for the relation to K-theory).  That $W(K)$ is reduced follows easily from the definition of multiplication in $W(K)$ and that $K$ has no nilpotents. Since the Grothendieck ring is a quotient of $R(\tilde{A}_0)$, and its image in $W(K)$ is not finitely generated as a $\Z$-module, we get that $\tilde{A}_0$ is not of multiplicative finite type over a finite field either.

Since a graph is a tree if and only if it doesn't have a loop as a minor, the proposition above implies that a quiver of finite multiplicative type must be a tree.
The following example shows that not every tree is of finite multiplicative type.

\begin{example}
Let $Q$ be the quiver of type $\tilde{D}_4$, oriented to have ``crossing paths'':
\[
Q = \crosspathsq . 
\]
Now consider the quiver
\[
Q' = \vcenter{
\xymatrix{
{\bullet}	\ar[r] \ar[d]	& {\bullet} \\
{\bullet}		& {\bullet} \ar[l] \ar[u] }}
\]
of type $\tilde{A}_3$.  There is a functor $f^* \colon \repq \to \rep (Q')$ given by
\[
\crosspathsmaps{V_1}{V_2}{V_3}{V_4}{V_5}{V_a}{V_b}{V_c}{V_d} \longmapsto \vcenter{
\xymatrix{
{V_1}	\ar[r]^{V_c V_a} \ar[d]^{V_d V_a}	& {V_2} \\
{V_5}		& {V_4} \ar[l]^{V_d V_c} \ar[u]_{V_c V_b} } }
\]
which, from the categorical viewpoint, is the composition of a representation $V \colon \mathscr{Q} \to \vscat$ with a certain functor $f \colon \mathscr{Q}' \to \mathscr{Q}$ (cf. \S\ref{rankfunctorsect}).  Equivalently, if we relax our definition of maps of directed graphs to allow arrows to be mapped to \emph{paths} in the target, then $f^*$ is still a pullback along a certain map $f \colon Q' \to Q$.
Applying the global tensor functor $\rkf_{Q'}$ to a representation of $Q'$ gives a representation in which all maps over the arrows of $Q'$ are isomorphisms (Theorem \ref{kinserrankthm}).  Such a representation induces a representation of $\tilde{A}_0$ by taking the underlying vector space to be the space at any vertex of $Q'$, and the endomorphism to be given by traversing around the cycle once, say clockwise.  This gives a functor $\loopfunctor \colon \rep^{\circ} (Q') \to \rep(\tilde{A}_0)$, where the domain is defined as the full subcategory of $\rep (Q')$ consisting of representations which have an isomorphism at each arrow of $Q'$.  So, summarily, we compose functors:
\[
\repq \xto{f^*} \rep (Q') \xto{\rkf_{Q'}} \rep^{\circ} (Q') \xto{\loopfunctor} \rep (\tilde{A}_0) . 
\]
Each of these functors respects direct sum and tensor product, and preserves $\id$, hence we have a ring homomorphism $R(Q) \to R(\tilde{A}_0)$.

For $\lambda \in K$ and $n \in \Z$, define a representation
\[
V_{\lambda} (n):= \crosspathsmaps{K^n}{K^n}{K^{2n}}{K^n}{K^n}{A}{B}{C}{D}
\]
with the maps given by the block form matrices
\[
A = {\twobyone{id}{0}} \qquad B={\twobyone{0}{id}}  \qquad C={\onebytwo{id}{id}} \qquad D= {\onebytwo{id}{J_\lambda(n)}},
\]
where $J_\lambda (n)$ is the Jordan block of size $n$ with eigenvalue $\lambda$.

Then the global tensor functor of $Q'$, applied to $f^* (V_\lambda (n))$, gives the representation 
\[
\rkf_{Q'} (f^* V_\lambda(n)) = \vcenter{
\xymatrix{
{K^n}	\ar[r]^{id} \ar[d]^{id}	& {K^n} \\
{K^n}		& {K^n} \ar[l]^{J_\lambda(n)} \ar[u]^{id} }}
\]
when $\lambda \neq 0$, and the 0 representation when $\lambda = 0$. Applying the functor $\loopfunctor$ gives the endomorphism $J_\lambda (n)$ of $K^n$ for $\lambda \neq 0$, and hence the image of the induced map $R(Q) \to R(\tilde{A}_0)$ contains the representations which have all eigenvalues nonzero.  The reduction of this image is not a finitely generated $\Z$-module, by a slight modification of the above argument for the loop quiver, and hence $R(Q)_{red}$ cannot be a finitely generated $\Z$ module either.
This shows that the tree quiver $Q$ is not of finite multiplicative type.
\end{example}

A similar argument with the pair of quivers
\[
Q' = \vcenter{\xymatrix@R=-1ex{
{\bullet} \ar[rr] \ar[dr] &	& {\bullet}	 \\
	 &	{\bullet}		 \\
& {\bullet} & {\bullet} \ar[ul] \ar[l] 	\\
{\bullet} \ar[ur] \ar[dr] &		&	&	{\bullet} \ar[uuul] \ar[dddl] \\
& {\bullet} & {\bullet} \ar[dl] \ar[l] 	\\
	 &	{\bullet}	 \\
{\bullet} \ar[rr] \ar[ur] &	& {\bullet}	}}
\qquad
Q = \extEsixa{\bullet}{\bullet}{\bullet}{\bullet}{\bullet}{\bullet}{\bullet}
\]
shows that this $Q$ is also not of finite multiplicative type.  To approach the classification of all quivers of finite multiplicative type, we suggest an analogy to the classification of quivers of finite representation type (cf. \cite{gabriel}).  It is well-known that a quiver $Q$ is of finite representation type if and only if $Q$ is a Dynkin diagram, of any orientation.  This is equivalent to saying that $Q$ does not have a minor of type $\tilde{A}_0, \tilde{D}_4, \tilde{E}_6, \tilde{E}_7$, or $\tilde{E}_8$ (of any orientation).  There are only finitely many orientations of a given graph, so the finite set of quivers of these five types gives a finite set of ``obstructions'' to a quiver being of finite representation type; these are sometimes called \textbf{forbidden minors}.

It is an open problem to give forbidden minors that characterize quivers of finite multiplicative type (over $\C$ even).  The Tree Theorem of J.B. Kruskal (cf. \cite{kruskaltreetheorem}) can be applied to show that the set of tree quivers is well-quasi-ordered; with this, we can at least say that the finite multiplicative type property can be characterized over a given field $K$ by a \emph{finite} set of forbidden minors.

\bibliographystyle{alpha}
\bibliography{ryanbiblio}

\end{document}